\documentclass[11pt]{article}
\usepackage{amssymb,amsmath,amsthm,rotating,setspace,indentfirst,graphicx,float,mathtools,cases}
\usepackage{tikz}

\topmargin -.5in
\textheight 9in
\textwidth 6.5in
\oddsidemargin 0.0in
\evensidemargin 0.0in

\numberwithin{equation}{section}

%\doublespacing

\theoremstyle{plain}

\newtheorem{thm}{Theorem}[section]
\newtheorem{lem}{Lemma}[section]
\newtheorem{cor}{Corollary}[section]

\newtheorem{altthm}{Theorem}

\theoremstyle{definition}
\newtheorem{rem}{Remark}[section]

\begin{document}

\title{Existence of Nonnegative Solutions of Nonlinear Fractional
  Parabolic Inequalities}

\author{Steven D. Taliaferro\\
Mathematics Department\\
Texas A\&M University\\
College Station, TX 77843}

\date{}
\maketitle	

\begin{abstract}
We study the existence of nontrivial nonlocal nonnegative solutions
$u(x,t)$ of the nonlinear initial value problems
\begin{numcases}{}
(\partial_t -\Delta)^\alpha u\geq u^\lambda 
&in $\mathbb{R}^n \times\mathbb{R},\,n\geq1$ \notag\\
 u=0 &in $\mathbb{R}^n \times(-\infty,0)$ \notag
\end{numcases}
and

\begin{numcases}{}
C_1 u^\lambda \leq(\partial_t -\Delta)^\alpha u\leq C_2 u^\lambda 
&in $\mathbb{R}^n \times\mathbb{R},\,n\geq1$ \notag\\
 u=0 &in $\mathbb{R}^n \times(-\infty,0)$, \notag
\end{numcases}
where $\lambda,\alpha,C_1$, and $C_2$ are positive constants with
$C_1 <C_2$. We use the definition of the fractional heat operator
$(\partial_t -\Delta)^\alpha$ given in \cite{T} and compare our
results in the classical case $\alpha=1$ to known results.
\medskip

\noindent 2010 Mathematics Subject Classification. 35B09, 35B33,
35K58, 35R11.

\noindent {\it Keywords}. Fully fractional heat operator, Nonlocal solution.
\end{abstract}

\section{Introduction}\label{introduction}
In this paper we study the existence of nontrivial nonlocal
nonnegative solutions $u(x,t)$ of the nonlinear initial value problems

\begin{numcases}{}
(\partial_t -\Delta)^\alpha u\geq u^\lambda 
&in $\mathbb{R}^n \times\mathbb{R},\,n\geq1$ \label{sup1}\\
 u=0 &in $\mathbb{R}^n \times(-\infty,0)$ \label{sup2}
\end{numcases}
and

\begin{numcases}{}
C_1 u^\lambda \leq(\partial_t -\Delta)^\alpha u\leq C_2 u^\lambda 
&in $\mathbb{R}^n \times\mathbb{R},\,n\geq1$ \label{ap1}\\
 u=0 &in $\mathbb{R}^n \times(-\infty,0)$, \label{ap2}
\end{numcases}
where $\lambda,\alpha,C_1$, and $C_2$ are positive constants with
$C_1 <C_2$. 

For a discussion of where the nonlocal fractional heat operator
$(\partial_t-\Delta)^\alpha$, $\alpha>0$, arises naturally in
applications, please see \cite{NS,ST}.

For each of the problems \eqref{sup1}, \eqref{sup2} and \eqref{ap1},
\eqref{ap2}, we compare our results in the classical case $\alpha=1$
to known results. Specifically, our result Theorem \ref{thm1.1} in
Section \ref{sec1.1} for the problem \eqref{sup1}, \eqref{sup2}
implies
\begin{enumerate}
\item[(i)] when $\lambda>1$ the existence of a critical exponent
  $\lambda_0(n,\alpha)>1$ for the nonexistence of nontrivial
  nonnegative solutions of the problem \eqref{sup1}, \eqref{sup2},
  which agrees with the well-known Fujita exponent $\lambda_F=1+2/n$
  when $\alpha=1$ (see Remark \ref{F}) and
\item[(ii)] when $0<\lambda<1$ a nonexistence result for nontrivial
  nonnegative solutions of the problem \eqref{sup1}, \eqref{sup2}
  which when $\alpha=1$ is similar to a result in \cite{AE} (see
  Section \ref{sec1.1}).
 \end{enumerate}

Similarly, our result Theorem \ref{thm1.2} in Section \ref{sec1.2} for
the problem \eqref{ap1}, \eqref{ap2} implies when $\lambda>1$ the
existence of a critical exponent $\lambda_1(n,\alpha)>1$ for the
existence of nontrivial nonnegative solutions of the problem \eqref{ap1},
\eqref{ap2} which agrees when $\alpha=1$ with a critical exponent in
\cite{HW} for the existence of self-similar solutions of the problem

\begin{numcases}{}
(\partial_t -\Delta)^\alpha u=u^\lambda
&in $\mathbb{R}^n \times\mathbb{R},\,n\geq1$ \label{ex1}\\
 u=0 &in $\mathbb{R}^n \times(-\infty,0)$ \label{ex2}
\end{numcases}
when $\alpha=1$ (see Section \ref{sec1.2}).

In order to complement our results for the two problems \eqref{sup1},
\eqref{sup2} and \eqref{ap1}, \eqref{ap2}, we recall in Section
\ref{sec1.3} our result in \cite{T} dealing with the existence of
nontrivial nonlocal nonnegative solutions of the initial value problem
\begin{numcases}{}
0\leq(\partial_t -\Delta)^\alpha u\leq u^\lambda
&in $\mathbb{R}^n \times\mathbb{R},\,n\geq1$ \label{sub1}\\
 u=0 &in $\mathbb{R}^n \times(-\infty,0)$. \label{sub2}
\end{numcases}

We refer to the four problems (\ref{sup1}, \ref{sup2}), (\ref{ap1},
\ref{ap2}), (\ref{ex1}, \ref{ex2}), and (\ref{sub1}, \ref{sub2}) as the
super problem, approximate problem, exact problem, and sub problem,
respectively.

As in \cite{T}, we define the fully fractional nonlocal heat operator
\begin{equation}\label{Ha}
 (\partial_t -\Delta)^\alpha :Y^{p}_{\alpha}\to X^p
\end{equation}
for
\begin{equation}\label{ap}
 \left(p>1\text{ and }0<\alpha<\frac{n+2}{2p}\right)\quad\text{ or }
\quad\left(p=1\text{ and }0<\alpha\leq\frac{n+2}{2p}\right)
\end{equation}as the inverse of the operator
\begin{equation}\label{I.5}
 J_\alpha :X^p \to Y^{p}_{\alpha}
\end{equation}
where
\begin{equation}\label{I.6}
 X^p:=\bigcap_{T\in\mathbb{R}}L^p (\mathbb{R}^n \times\mathbb{R}_T ),
\quad\mathbb{R}_T :=(-\infty,T),
\end{equation}
\begin{equation}\label{I.7}
 J_\alpha f(x,t):=\iint_{\mathbb{R}^n \times\mathbb{R}_t}
\Phi_\alpha (x-\xi,t-\tau)f(\xi,\tau)\,d\xi \,d\tau
\end{equation}
and
\begin{equation}\label{I.8}
 Y^{p}_{\alpha}:=J_\alpha (X^p ).
\end{equation}
By \eqref{I.6} we mean $X^p$ is the set of all measurable functions
$f:\mathbb{R}^n \times\mathbb{R}\to\mathbb{R}$ such that
$$\| f\|_{L^p (\mathbb{R}^n \times\mathbb{R}_T )}<\infty
\quad\text{ for all }T\in\mathbb{R}.$$
In the definition \eqref{I.7} of $J_\alpha$,
\begin{equation}\label{I.9}
\Phi_\alpha (x,t):=\frac{t^{\alpha-1}}{\Gamma(\alpha)}\,\frac{1}{(4\pi
  t)^{n/2}}e^{-|x|^2 /(4t)}\raisebox{2pt}{$\chi$}_{(0,\infty)}(t)
\end{equation}
is the fractional heat kernel.

When $p$ and $\alpha$ satisfy \eqref{ap}, it was shown in \cite{T}
that the operator \eqref{I.5} has the following properties:
\begin{enumerate}
\item[(P1)] it makes sense because
  $J_\alpha f\in L^{p}_{\text{loc}}(\mathbb{R}^n
  \times\mathbb{R})\text{ for }f\in X^p$,
\item[(P2)] it is one-to-one and onto, and
\item[(P3)] if $f\in X^p$ and $u=J_\alpha f$ then $f=0$ in
  $\mathbb{R}^n \times(-\infty,0)$ if and only if $u=0$ in
  $\mathbb{R}^n \times(-\infty,0)$.
\end{enumerate}
By properties (P1) and (P2) we can indeed define \eqref{Ha} as the
inverse of \eqref{I.5} when $p$ and $\alpha$ satisfy \eqref{ap}.
Property (P3) will be needed to handle the initial condition
$u=0$ in the above initial value problems.

Motivation for the above definition of \eqref{Ha} along with some more
of its properties can be found in \cite{T}.

Stinga and Torrea \cite{ST} (see also Nystr\"om and Sande \cite{NS})
gave an alternate definition of the fractional nonlocal heat operator
\[
(\partial_t-\Delta)^\alpha:U\to V,
\]
which agrees with our definition \eqref{Ha} on the intersection $U\cap
Y^p_\alpha $ of their domains. Functions
$u:\mathbb{R}^n\times\mathbb{R}\to \mathbb{R}$ in $U$ are required to
be bounded and sufficiently smooth. Their definition, unlike ours, is
well suited for studying the Dirichlet problem for 
\begin{equation}\label{I10}
(\partial_t-\Delta)^\alpha u=f \quad\text{in } \Omega\times(0,T)
\end{equation}
where $\Omega\subset\mathbb{R}^n$ is a bounded domain. However our
definition seems more suited for studying \eqref{I10} when
$\Omega=\mathbb{R}^n$ and $T=\infty$ because functions in $Y^p_\alpha$
can be discontinuous and locally unbounded, which allows for a greater
variety of solutions of \eqref{I10}.

The operator  \eqref{Ha} is a fully fractional heat operator as
opposed to time fractional heat operators in which the fractional
derivatives are only with respect to $t$, and space fractional heat
operators, in which the fractional derivatives are only with respect to $x$.

Some recent results for nonlinear PDEs containing time (resp. space)
fractional heat operators can be found in \cite{AV,A,ACV,DVV,K,
  KSVZ,M,OD,SS,VZ,ZS}
(resp. \cite{AABP,AMPP,BV,CVW,DS,FKRT,GW,JS,MT,PV,Su,V,VV,VPQR}). Except
for \cite{T}, we know of no results for nonlinear PDEs containing the
fully fractional heat operator $(\partial_t -\Delta)^\alpha$. However
results for linear PDEs containing this operator, including in
particular
\[
(\partial_t-\Delta)^\alpha u=f,
\]
where $f$ is a given function, can be found in \cite{ACM,NS,SK,ST}.

\section{Statement and relevance of results}\label{sec2}
In this section we state our results and relate them to results in
\cite{AE,F,HW,T}.  In order to do this, we first note that for each
fixed $p\geq1$ the open first quadrant of the $\lambda\alpha$-plane is
the union of the following pairwise disjoint sets which are graphed in
Figure \ref{fig1}:
\begin{align*}
 & A:=\{(\lambda,\alpha):0<\lambda<1\text{ and }\alpha>0\},\\
 & B:=\{(\lambda,\alpha):\lambda=1\text{ and }\alpha>0\},\\
 & C:=\left\{(\lambda,\alpha):\lambda>1\text{ and }\alpha\geq\frac{n+2}{2}\left(1-\frac{1}{\lambda}\right)\right\},\\
 & D:=\left\{(\lambda,\alpha):\lambda>1\text{ and }\frac{n+2}{2p}\left(1-\frac{1}{\lambda}\right)\leq\alpha<\frac{n+2}{2}\left(1-\frac{1}{\lambda}\right)\right\},\\
 & E:=\left\{(\lambda,\alpha):\lambda>1\text{ and }0<\alpha<\frac{n+2}{2p}\left(1-\frac{1}{\lambda}\right)\right\}.
\end{align*}

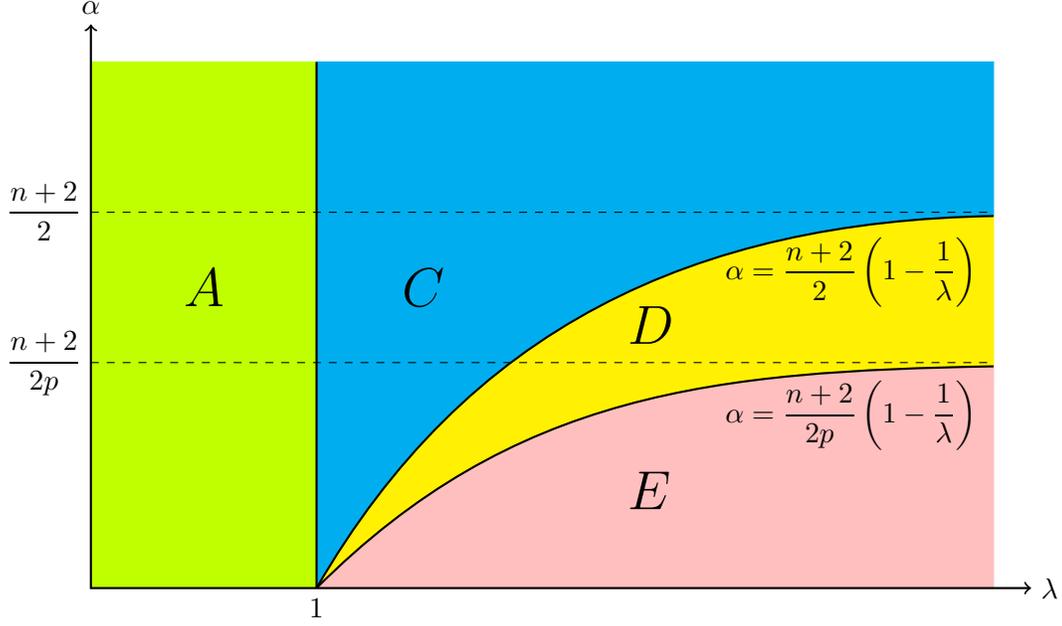
\begin{figure} 

\begin{tikzpicture}
\draw [fill=lime, lime] (0,0) rectangle (3,7);
\draw [fill=pink, pink] (3,0) to [out=45, in=181] (12,2.95) -- (12,0)
-- (3,0);
\draw [fill=cyan, cyan] (3,0) to [out=60, in=181] (12,4.95) -- (12,7)
-- (3,7) -- (3,0);
\draw [fill=yellow, yellow] (3,0) to [out=60, in=181] (12,4.95) -- (12,2.95)
to [out=181, in=45] (3,0);
\draw [<->] [thick] (0,7.5) -- (0,0) -- (12.5,0);
\draw [thick] (3,0) -- (3,7);
\node [left] at (0,2.99) {$\hskip 0.5in \dfrac{n+2}{2p}$};
\node [left] at (0,5.02) {$\hskip 0.5in \dfrac{n+2}{2}$};
\node [below] at (3,0) {1};
\draw [dashed] (0,3) -- (12,3);
\draw [dashed] (0,5) -- (12,5);
\draw [thick] (3,0) to [out=45, in=181] (12,2.95);
\draw [thick] (3,0) to [out=60, in=181] (12,4.95);
\node [right] at (8.3,2.3) {$\alpha=\dfrac{n+2}{2p}\left(1-\dfrac{1}{\lambda}\right)$};
\node [right] at (8.3,4.2) {$\alpha=\dfrac{n+2}{2}\left(1-\dfrac{1}{\lambda}\right)$};
\node [right] at (12.5,0) {$\lambda$};
\node [above] at (0,7.5) {$\alpha$};
\node [right] at (1.1,4) {\huge $A$};
\node [right] at (4.0,4) {\huge $C$};
\node [right] at (7,3.5) {\huge $D$};
\node [right] at (7,1.3) {\huge $E$};
\end{tikzpicture}

\caption{Graphs of the sets $A$, $B$, $C$, $D$, and $E$.}
\label{fig1}

\end{figure}

Note that if $p=1$ then $D=\emptyset$.

\subsection{The super problem}\label{sec1.1}
Our result for the super problem \eqref{sup1}, \eqref{sup2} is the
following. 

\begin{thm}\label{thm1.1}
  Suppose $\alpha$ and $p$ satisfy \eqref{ap} and $\lambda>0$.  Then
  the super problem \eqref{sup1}, \eqref{sup2} has a nontrivial
  nonnegative solution $u\in Y^{p}_{\alpha}$ if and only if
 $$(\lambda,\alpha)\in B\cup D\cup E.$$
\end{thm}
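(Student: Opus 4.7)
\textbf{Plan for Theorem \ref{thm1.1}.}

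By property (P2) every $u\in Y^{p}_{\alpha}$ uniquely equals $J_\alpha f$ for some $f\in X^p$, and by (P3) the initial condition \eqref{sup2} is equivalent to $f=0$ in $\mathbb{R}^n\times(-\infty,0)$. Hence the super problem \eqref{sup1}, \eqref{sup2} reduces to finding a nonnegative $f\in X^p$ supported in $\{t>0\}$ with $f\ge(J_\alpha f)^\lambda$ pointwise a.e. I handle existence and nonexistence separately.

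\emph{Existence for $(\lambda,\alpha)\in B\cup D\cup E$.} The parabolic rescaling $u(x,t)\mapsto k^{2\alpha/(\lambda-1)}u(kx,k^2t)$ preserves $(\partial_t-\Delta)^\alpha u=u^\lambda$ when $\lambda>1$, suggesting self-similar profiles with parabolic decay rate $2\alpha/(\lambda-1)$; the boundaries $\alpha=\tfrac{n+2}{2}(1-1/\lambda)$ and $\alpha=\tfrac{n+2}{2p}(1-1/\lambda)$ are precisely the thresholds at which such profiles cease to lie in $L^1$ and $L^p$ respectively in parabolic dimension $n+2$. I exhibit $u$ via $J_\alpha$ applied to an explicit nonnegative $f\in X^p$: in region $D$ a truncated self-similar profile of the correct decay, in region $E$ a compactly supported bump in $\{t>0\}$ (whose $J_\alpha$-image still lies in $L^p_{\text{loc}}$), and in region $B$ ($\lambda=1$) any sufficiently small $u=J_\alpha f$ with $f\in X^p$ supported in $\{t>0\}$. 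In each case the amplitude is chosen small enough that $f\ge u^\lambda$ holds pointwise (which is possible in $D$ and $E$ because $\lambda>1$ makes $u^\lambda$ decrease faster than $f$ under shrinking the amplitude, and in $B$ because the problem is linear).

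\emph{Nonexistence for $(\lambda,\alpha)\in A\cup C$.} Assume for contradiction that $u=J_\alpha f$ is a nontrivial solution, so that $f\ge u^\lambda\ge 0$ and
\[
u(x,t)\ge \iint_{\mathbb{R}^n\times\mathbb{R}_t}\Phi_\alpha(x-\xi,t-\tau)\,u(\xi,\tau)^\lambda\,d\xi\,d\tau.
\]
Since $u\not\equiv 0$ and $u=0$ in $\{t<0\}$, there is a parabolic cube $Q\subset\{t>0\}$ and $c>0$ with $u\ge c\chi_Q$ on a positive-measure subset. In region $C$ ($\lambda>1$ and $\alpha\ge\tfrac{n+2}{2}(1-1/\lambda)$) I iterate this inequality using the pointwise lower bound $\Phi_\alpha(y,s)\gtrsim s^{\alpha-1-n/2}$ on $|y|^2\lesssim s$: each application of $J_\alpha(\,\cdot\,^\lambda)$ enlarges $Q$ and updates the constant according to a recursion that the supercritical exponent forces to blow up in a finite parabolic region, contradicting $u\in L^p_{\text{loc}}$. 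In region $A$ ($0<\lambda<1$) I apply Jensen's inequality to the concave map $u\mapsto u^\lambda$ inside $J_\alpha(u^\lambda)$, replacing $u^\lambda$ by a mean-type lower bound, and run an analogous iteration in which the sub-linear source dominates small values and again forces $u$ to grow beyond any $L^p_{\text{loc}}$ bound.

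\emph{Main obstacle.} The principal difficulty is carrying out the Fujita-type iteration for the fully nonlocal operator $(\partial_t-\Delta)^\alpha$: without access to classical cutoff-and-integrate-by-parts arguments, every estimate must be performed at the level of the kernel $\Phi_\alpha$, and the exponent bookkeeping across iterations must match the self-similar scaling exactly so that blow-up of the iterates occurs precisely at the threshold $\alpha=\tfrac{n+2}{2}(1-1/\lambda)$, recovering the classical Fujita exponent when $\alpha=1$. A secondary delicacy is, on the existence side, choosing the profile in regions $D$ and $E$ so that $u$ lies in $Y^{p}_\alpha$ for the intended $p$ while the pointwise supersolution inequality $f\ge u^\lambda$ holds globally, not merely on the support of $u$.
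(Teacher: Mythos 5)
Your reduction to the $J_\alpha$ problem ($f\ge(J_\alpha f)^\lambda$ with $f\in X^p$ vanishing for $t<0$) and your partition into existence on $B\cup D\cup E$ versus nonexistence on $A\cup C$ match the paper's architecture (Theorems \ref{thm4.1}--\ref{thm4.4}), but several of your individual mechanisms fail as stated. On the existence side: for $\lambda=1$ the inequality $f\ge J_\alpha f$ is homogeneous of degree one, so ``choosing the amplitude small'' accomplishes nothing; the paper instead takes $f(x,t)=e^{a(t+T)}\Phi_1(x,t+T)\raisebox{2pt}{$\chi$}_{[0,\infty)}(t)$ and uses $e^{-at}\int_0^t(t-\tau)^{\alpha-1}e^{a\tau}\,d\tau\le\Gamma(\alpha)a^{-\alpha}\to0$ as $a\to\infty$ (Theorem \ref{thm4.2}). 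In region $E$ a compactly supported bump cannot work at all: $\Phi_\alpha(x-\xi,t-\tau)>0$ whenever $t>\tau$, so a nontrivial $f\ge0$ has $J_\alpha f>0$ on all of $\mathbb{R}^n\times(t_0,\infty)$, and the supersolution inequality then forces $f>0$ there --- the very global-positivity issue you call a ``secondary delicacy'' rules out your construction outright. The paper covers all of $D\cup E$ with the single profile $f=A\Phi_\beta(x,t+T)\raisebox{2pt}{$\chi$}_{[0,\infty)}(t)$, $\beta=\frac{n+2}{2}-\frac{\lambda\alpha}{\lambda-1}>0$, exploiting $J_\alpha\Phi_\beta=\Phi_{\alpha+\beta}$ (Lemma \ref{lem3.1}) to make $(J_\alpha f)^\lambda/f$ explicitly $\le1$ (Theorem \ref{thm4.4}).

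On the nonexistence side your contradictions are aimed at the wrong target. In region $A$ the paper's iteration (Theorem \ref{thm4.1}), whose constants obey $b_j/N_0=(b_{j-1}/N_0)^\lambda\to1$ precisely because $0<\lambda<1$, yields the spatially uniform lower bound $f(x,t)\ge(M(t-a)^\alpha)^{\lambda/(1-\lambda)}$ for all $x\in\mathbb{R}^n$; this is perfectly locally integrable, and the contradiction is with the global-in-space condition $\|f\|_{L^p(\mathbb{R}^n\times\mathbb{R}_T)}<\infty$ defining $X^p$, not with $L^p_{\mathrm{loc}}$. In region $C$ your pure kernel iteration is a genuinely different route: the paper sets $u=J_mf$ for the integer $m$ with $\alpha\le m<\alpha+1$, derives $(\partial_t-\Delta)^mu\ge(c(t+1)^{\alpha-m}u)^\lambda$ distributionally, and applies a rescaled-test-function argument (Lemma \ref{lem3.3}, after \cite[Theorem 18.1]{QS}) --- so the ``classical cutoff-and-integrate-by-parts argument'' you say is unavailable is in fact exactly what is used. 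Note also that $C$ contains its boundary curve $\alpha=\frac{n+2}{2}(1-1/\lambda)$, the critical case, where naive iteration of the integral inequality typically does not close and where the test-function method succeeds only through the limiting step \eqref{L3.8}--\eqref{L3.9}. As written, your plan has real gaps in $B$, in $E$, in $A$, and at the critical curve in $C$.
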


An immediate consequence of Theorem \ref{thm1.1} is the following
corollary.

\begin{cor}\label{cor1.1}
  Suppose $\alpha$ and $p$ satisfy \eqref{ap} and $0<\lambda<1$. Then
  the only nonnegative solution $u\in Y^{p}_{\alpha}$ of the super
  problem \eqref{sup1}, \eqref{sup2} is the trivial solution
  $u\equiv0$.
\end{cor}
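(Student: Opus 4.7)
The plan is essentially to read off the corollary from Theorem \ref{thm1.1}. First I would observe that the hypothesis $0<\lambda<1$ places the pair $(\lambda,\alpha)$ squarely inside the set
\[
A=\{(\lambda,\alpha): 0<\lambda<1\text{ and }\alpha>0\},
\]
regardless of which admissible $\alpha$ (satisfying \eqref{ap}) we have chosen. Since $A$, $B$, $C$, $D$, $E$ are pairwise disjoint by construction, this immediately gives $(\lambda,\alpha)\notin B\cup D\cup E$.

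Now I would invoke Theorem \ref{thm1.1}: the super problem \eqref{sup1}, \eqref{sup2} admits a nontrivial nonnegative solution in $Y^{p}_\alpha$ if and only if $(\lambda,\alpha)\in B\cup D\cup E$. Combining this with the previous observation, no nontrivial nonnegative $u\in Y^{p}_\alpha$ can solve \eqref{sup1}, \eqref{sup2}. Since the zero function trivially lies in $Y^{p}_\alpha$ (it is $J_\alpha 0$) and solves \eqref{sup1}, \eqref{sup2}, the only such nonnegative solution is $u\equiv 0$.

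There is really no obstacle here beyond correctly placing $(\lambda,\alpha)$ in the appropriate region of Figure \ref{fig1}; all the analytic content is already contained in Theorem \ref{thm1.1}, and the corollary is simply the specialization of the ``only if'' direction to the subregion $A$ of the $\lambda\alpha$-plane.
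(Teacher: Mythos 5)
Your proposal is correct and matches the paper exactly: the paper presents Corollary \ref{cor1.1} as an immediate consequence of Theorem \ref{thm1.1}, precisely because $0<\lambda<1$ forces $(\lambda,\alpha)\in A$, which is disjoint from $B\cup D\cup E$. Nothing further is needed.
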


A result similar to Corollary \ref{cor1.1} when $\alpha=1$ was proved
in \cite{AE} for mild nonnegative super solutions of the initial value
problem
\begin{align*}
 & (\partial_t -\Delta)u=u^\lambda \qquad\text{in }\mathbb{R}^n \times(0,\infty)\\
 & u(x,0)=u_0 (x) \qquad\text{for }x\in\mathbb{R}^n,
\end{align*}
where $0<\lambda<1$.

Since \eqref{ap} holds when $p=1$ and $0<\alpha\le (n+2)/2$ and since
$(\lambda,\alpha)\in D\cup E$ if and only if 
\[
0<\alpha<\frac{n+2}{2}\quad\text{and}\quad
\lambda>\lambda_0(n,\alpha):=1+\frac{2\alpha}{n+2-2\alpha}
 \]
(see Figure \ref{fig1}), we obtain
also from Theorem \ref{thm1.1} the following result.

\begin{cor}\label{cor1.2}
Suppose $0<\alpha\le (n+2)/2$ and $\lambda>1$. Then the super problem
\eqref{sup1}, \eqref{sup2} has a nontrivial nonnegative solution $u\in
Y^1_{\alpha}$  if and only if
\[
\alpha\not=(n+2)/2\quad\text{and}\quad 
\lambda>\lambda_0(n,\alpha).
\]
\end{cor}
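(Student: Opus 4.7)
The plan is to apply Theorem \ref{thm1.1} with $p=1$, which is permitted by \eqref{ap} precisely when $0<\alpha\le (n+2)/2$. Under this choice of $p$, the existence of a nontrivial nonnegative solution in $Y^1_\alpha$ is equivalent to $(\lambda,\alpha)\in B\cup D\cup E$. Since the hypothesis $\lambda>1$ rules out the vertical line $B$, I only need to characterize when $(\lambda,\alpha)\in D\cup E$ in the strip $0<\alpha\le (n+2)/2$, $\lambda>1$.

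First I would note that when $p=1$ the set $D$ is empty (as already observed after Figure \ref{fig1}), since its defining double inequality $\frac{n+2}{2p}(1-\frac{1}{\lambda})\le\alpha<\frac{n+2}{2}(1-\frac{1}{\lambda})$ collapses when the two bounds coincide. Thus the question reduces to characterizing membership in $E$, that is, the region $\lambda>1$ and $0<\alpha<\frac{n+2}{2}(1-\frac{1}{\lambda})$.

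The remaining step is routine algebraic manipulation. For $\lambda>1$, the strict inequality $\alpha<\frac{n+2}{2}(1-\frac{1}{\lambda})$ forces $\alpha<(n+2)/2$, and solving it for $\lambda$ yields $\lambda>1+\frac{2\alpha}{n+2-2\alpha}=\lambda_0(n,\alpha)$; the converse follows by reversing the computation. The endpoint $\alpha=(n+2)/2$ is automatically excluded, because $1-\frac{1}{\lambda}<1$ would otherwise force $\alpha<(n+2)/2$, a contradiction.

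Since the substantive content rests entirely on Theorem \ref{thm1.1}, there is no genuine obstacle here; the only point requiring care is tracking the strict versus non-strict inequalities that govern the boundary case $\alpha=(n+2)/2$ and the emptiness of $D$ when $p=1$.
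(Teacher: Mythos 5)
Your proposal is correct and follows essentially the same route as the paper: the corollary is deduced directly from Theorem \ref{thm1.1} with $p=1$, using that $\lambda>1$ excludes $B$ and that membership in $D\cup E$ (which for $p=1$ reduces to $E$) is equivalent to $\alpha<(n+2)/2$ and $\lambda>\lambda_0(n,\alpha)$. The only cosmetic difference is that the paper characterizes $D\cup E$ in one step rather than first discarding $D$ as empty; the algebra and the treatment of the boundary case $\alpha=(n+2)/2$ are the same.
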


For comparison with Corollary \ref{cor1.2}, we recall the famous
Fujita result \cite{F}, the following improved version of which
appears in \cite[Theorem 18.1]{QS}.

\begin{altthm}\label{thmA}
 If $1<\lambda\leq1+2/n$ then the only nonnegative solution $u\in L^{\lambda}_{\text{loc}}(\mathbb{R}^n \times(0,\infty))$ of the inequality
 $$(\partial_t -\Delta)u\geq u^\lambda \quad\text{in }\mathcal{D}^\prime (\mathbb{R}^n \times(0,\infty))$$
 is the trivial solution $u\equiv 0$.
\end{altthm}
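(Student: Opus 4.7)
The plan is to use the standard Baras--Pierre / Mitidieri--Pohozaev test-function method on the distributional formulation; a self-contained proof in this form is \cite[Theorem~18.1]{QS}. The entire argument is driven by a parabolic rescaling that matches the scaling of $(\partial_t-\Delta)$ with that of $u\mapsto u^\lambda$ precisely at the Fujita exponent $1+2/n$.

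\smallskip

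\emph{Setup.} Pick $\psi_0\in C^\infty_c(\mathbb{R}^n\times(0,\infty))$ with $0\le\psi_0\le 1$, $\psi_0\equiv 1$ on a standard parabolic cylinder, and for $R>0$ set $\psi_R(x,t):=\psi_0(x/R,t/R^2)^k$ for an integer $k\ge 2\lambda'$, where $\lambda'=\lambda/(\lambda-1)$. Using $\psi_R$ as a nonnegative test function in the distributional inequality $(\partial_t-\Delta)u\ge u^\lambda$ and then applying H\"older's inequality with exponents $\lambda,\lambda'$ yields the usual Pohozaev-type bound
\[
\iint u^\lambda\,\psi_R \;\le\; \iint\frac{|(\partial_t+\Delta)\psi_R|^{\lambda'}}{\psi_R^{\lambda'/\lambda}}\,dx\,dt,
\]
the choice $k\ge 2\lambda'$ ensuring no denominator singularity on the right.

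\smallskip

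\emph{Scaling.} The parabolic change of variables $y=x/R,\,s=t/R^2$ controls the right-hand side by $C\,R^{\,n+2-2\lambda'}$, and an elementary algebraic computation gives
\[
n+2-2\lambda' \;=\; \frac{n\lambda-n-2}{\lambda-1},
\]
which is $\le 0$ precisely in the Fujita range $1<\lambda\le 1+2/n$. In the strictly subcritical range $\lambda<1+2/n$ this exponent is negative, so a standard exhaustion of $\mathbb{R}^n\times(0,\infty)$ by dilations and spatial translations of $\{\psi_R\equiv 1\}$, combined with monotone convergence, forces $\iint u^\lambda=0$ and hence $u\equiv 0$.

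\smallskip

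The principal obstacle is the critical case $\lambda=1+2/n$, where the scaling exponent vanishes and the argument above yields only the uniform bound $\iint u^\lambda\psi_R\le C$, equivalently the global integrability $u^\lambda\in L^1(\mathbb{R}^n\times(0,\infty))$. This must be upgraded to $u\equiv 0$ by sharpening the H\"older step: one restricts both factors to the annular ``skeleton'' $A_R:=\{(\partial_t+\Delta)\psi_R\neq 0\}$, which escapes every bounded subset of $\mathbb{R}^n\times(0,\infty)$ as $R\to\infty$; the global $L^1$-integrability of $u^\lambda$ and the dominated convergence theorem then drive the sharpened right-hand side to zero, concluding the proof.
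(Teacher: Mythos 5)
Your method is the paper's method: Theorem \ref{thmA} is obtained there (Remark \ref{1.2}) as the special case $m=\alpha=K=1$ of Lemma \ref{lem3.3}, whose proof is precisely the rescaled-test-function argument of \cite[Theorem~18.1]{QS} --- H\"older with exponents $\lambda,\lambda'$, the bound $C R^{(n+2)/\lambda'-2}$ on the right-hand factor, and, at the critical exponent, restriction of the right-hand integral to the parabolic annulus $Q_R$ (your skeleton $A_R$) so that finiteness of $\iint u^\lambda$ drives it to zero. The one step that does not close as written is your choice of cutoff: with $\psi_0\in C^\infty_c(\mathbb{R}^n\times(0,\infty))$ and the pure parabolic dilation $\psi_R(x,t)=\psi_0(x/R,t/R^2)^k$, the support of $\psi_R$ lies in $\mathbb{R}^n\times(aR^2,bR^2)$ for some $0<a<b$, so the sets $\{\psi_R\equiv 1\}$ are not nested and no fixed point $(x_1,t_1)$ remains inside them as $R\to\infty$; consequently $\iint u^\lambda\psi_R\le CR^{\,n+2-2\lambda'}\to 0$ in the subcritical case gives, at a fixed point, only the inequality with the finite value $R\sim\sqrt{t_1}$, not $u=0$, and the invocation of monotone convergence over a non-monotone exhaustion is not justified. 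The standard repair --- the one used in Lemma \ref{lem3.4} --- is to dilate only in space, $\varphi(x/R)^\gamma$, while anchoring the time cutoff at an arbitrary $t_0>0$, i.e. $\psi((t-t_0)/R^2)^\gamma$ with $\psi\equiv 1$ on $[0,1/2)$, so that $\{\varphi_R\psi_R\equiv 1\}\supset B_{R/2}(0)\times[t_0,t_0+R^2/2)$ increases to $\mathbb{R}^n\times[t_0,\infty)$ as $R\to\infty$; this preserves both the bound $CR^{\,n+2-2\lambda'}$ and the annular structure of $Q_R$, and letting $t_0\downarrow 0$ at the end yields $u\equiv 0$ on all of $\mathbb{R}^n\times(0,\infty)$. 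With that substitution your argument, including the critical-case refinement, coincides with the paper's.
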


\begin{rem}\label{F}
Since $\lambda_0(n,1)=1+2/n$, we see that Corollary \ref{cor1.2} can
be viewed as a fractional nonlocal version of Theorem \ref{thmA} and
$\lambda_0(n,\alpha)$ for $0<\alpha<(n+2)/2$ can be viewed as the
critical exponent of Fujita type for nonnegative solutions
$u\in Y^1_{\alpha}$ of the super problem \eqref{sup1}, \eqref{sup2}.
\end{rem}

For the proof of Theorem \ref{thm1.1} we will need the following lemma
of independent interest which gives in particular conditions for the
nonexistence of nontrivial nonnegative solutions of
$$(\partial_t -\Delta)^m u\geq u^\lambda \quad\text{in }
\mathcal{D}^\prime (\mathbb{R}^n \times(0,\infty))$$
where $m$ is a positive integer.

\begin{lem}\label{lem3.3}
 Suppose $m$ is a positive integer,
 \begin{equation}\label{L3.4}
  K>0,\quad (\lambda,\alpha)\in C,\quad\alpha\leq m,
 \end{equation}
 and $u\in L^{\lambda}_{\text{loc}}(\mathbb{R}^n \times(0,\infty))$ is
 a nonnegative solution of
 \begin{equation}\label{L3.2}
  (\partial_t -\Delta)^m u\geq(K(t+1)^{-(m-\alpha)}u)^\lambda 
\quad\text{in }\mathcal{D}^\prime (\mathbb{R}^n \times(0,\infty))
 \end{equation}
 such that
 \begin{equation}\label{L3.1}
  (\partial_t -\Delta)^j u\in L^1_{\text{loc}}(\mathbb{R}^n \times(0,\infty)),\qquad j=1,2,...,m-1
 \end{equation}
 and
 \begin{equation}\label{L3.3}
  (\partial_t -\Delta)^j u\geq0,\qquad j=1,2,...,m-1.
 \end{equation}
 Then
 \begin{equation}\label{L3.6}
  u=0\quad\text{in }\mathcal{D}^\prime (\mathbb{R}^n \times(0,\infty)).
 \end{equation}
\end{lem}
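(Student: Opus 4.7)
The plan is to prove \eqref{L3.6} by the Baras--Pierre test-function method, adapted to the $m$-fold parabolic operator with time-weighted nonlinearity. Write $L=\partial_t-\Delta$ and $L^*=-\partial_t-\Delta$. For any $\varphi\in C^\infty_c(\mathbb{R}^n\times(0,\infty))$ with $\varphi\geq 0$, hypothesis \eqref{L3.1} justifies integrating by parts $m$ times to obtain $\int L^m u\cdot\varphi=\int u\cdot L^{*m}\varphi$; combined with \eqref{L3.2} this yields
\[
\int u\cdot L^{*m}\varphi \,dx\,dt\geq K^{\lambda}\int u^{\lambda}(t+1)^{-\lambda(m-\alpha)}\varphi \,dx\,dt.
\]
Using $u\geq 0$ to replace $L^{*m}\varphi$ by its absolute value on the left and applying Hölder's inequality with exponents $\lambda$ and $\lambda'=\lambda/(\lambda-1)$, with
\[
I=\int u^{\lambda}(t+1)^{-\lambda(m-\alpha)}\varphi \,dx\,dt, \qquad J=\int(t+1)^{\lambda'(m-\alpha)}|L^{*m}\varphi|^{\lambda'}\varphi^{-\lambda'/\lambda} \,dx\,dt,
\]
I obtain $K^{\lambda}I\leq I^{1/\lambda}J^{1/\lambda'}$, hence $I\leq K^{-\lambda\lambda'}J$.

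I would then take a scaled family $\varphi_R(x,t)=\zeta_1(|x|/R)^{k}\zeta_2(t/R^2)^{k}$ with $k>m\lambda'$ and standard smooth cutoffs. Parabolic scaling gives $|L^{*m}\varphi_R|\lesssim R^{-2m}\varphi_R^{1/\lambda}$ on $\operatorname{supp}\varphi_R$ and, using $\alpha\leq m$, $(t+1)^{\lambda'(m-\alpha)}\lesssim R^{2\lambda'(m-\alpha)}$ there, so
\[
J_R\lesssim R^{n+2-2\alpha\lambda'}.
\]
The hypothesis $(\lambda,\alpha)\in C$, i.e.\ $\alpha\geq\tfrac{n+2}{2}(1-1/\lambda)$, translates precisely to $n+2-2\alpha\lambda'\leq 0$. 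When strict, $J_R\to 0$, so $I_R\to 0$; since $\varphi_R\to 1$ on compact subsets of $\mathbb{R}^n\times(0,\infty)$, Fatou's lemma gives \eqref{L3.6}. In the critical case $\alpha=\tfrac{n+2}{2}(1-1/\lambda)$, $I_R$ is uniformly bounded, so $u^{\lambda}(t+1)^{-\lambda(m-\alpha)}\in L^1(\mathbb{R}^n\times(0,\infty))$; a second pass with an annular cutoff supported in a parabolic shell $\{R^2\leq|x|^2+t\leq 4R^2\}$---for which $J_R$ is still bounded, and for which the corresponding $I_R$ tends to $0$ by the $L^1$ integrability and dominated convergence---then forces $I=0$, again giving \eqref{L3.6}.

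The principal obstacle I foresee is ensuring $\operatorname{supp}(\varphi_R)\subset\mathbb{R}^n\times(0,\infty)$ while preserving the parabolic scaling down to $t=0^+$: cutting off $t$ at a fixed positive level introduces boundary contributions to $J_R$ of size $R^n$ that do not vanish. This is where hypothesis \eqref{L3.3} enters essentially. Using $L^j u\geq 0$ for $j=0,\dots,m-1$, the IBP formula with a test function allowed to be non-zero at $t=0$ picks up boundary terms $\int_{\mathbb{R}^n}L^j u(x,0^+)\,L^{*(m-1-j)}\varphi(x,0)\,dx$, which are non-negative provided $\varphi$ is chosen so that $L^{*j}\varphi(x,0)\geq 0$ for $j=0,\dots,m-1$, and may then be discarded. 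A concrete such choice is $\varphi_R(x,t)=\zeta(t/R^2)^{k}(T-t)^{-n/2}e^{-|x|^2/(4(T-t))}$ with $T=2R^2+1$ and $\zeta$ flat at $0$: the backward-heat kernel $G(x,t)=(T-t)^{-n/2}e^{-|x|^2/(4(T-t))}$ satisfies $L^*G=0$, so $L^{*j}\varphi_R(x,0)=0$ for $j\geq 1$, and the identity $\int G\,dx=(4\pi)^{n/2}$ together with the pointwise bound $G(x,t)\gtrsim R^{-n}$ on compact sets then yield $\int_K u^{\lambda}(t+1)^{-\lambda(m-\alpha)}\leq CR^{n+2-2\alpha\lambda'}$, closing the argument.
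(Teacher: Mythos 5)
Your overall skeleton---rescaled test functions, H\"older's inequality against the weight $(t+1)^{m-\alpha}$, the exponent count $J_R\lesssim R^{\,n+2-2\alpha\lambda'}$ with $(\lambda,\alpha)\in C$ forcing $n+2-2\alpha\lambda'\le 0$, and an annular refinement in the critical case---is the same Mitidieri--Pokhozhaev-type argument the paper uses (the paper explicitly describes its proof as a modification of \cite[Proof of Theorem 18.1(i)]{QS}), and you have correctly located the real difficulty: a test function compactly supported in time in $(0,\infty)$ either loses the parabolic scaling near the initial time (contributing a non-vanishing $O(R^n)$ term to $J_R$) or has its support escape to infinity. The gap is in your resolution of that difficulty. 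Your backward-heat-kernel test function $\varphi_R=\zeta(t/R^2)^k(T-t)^{-n/2}e^{-|x|^2/(4(T-t))}$ is not compactly supported in $x$, so the pairing of $(\partial_t-\Delta)^m u$ with $\varphi_R$ is not provided by \eqref{L3.2} (a statement in $\mathcal{D}^\prime(\mathbb{R}^n\times(0,\infty))$), and the H\"older factor $I=\int u^\lambda(t+1)^{-\lambda(m-\alpha)}\varphi_R$ need not be finite, since $u$ is only assumed to lie in $L^\lambda_{\text{loc}}$. Worse, the boundary terms $\int_{\mathbb{R}^n}(\partial_t-\Delta)^ju(x,0^+)\,L^{*(m-1-j)}\varphi(x,0)\,dx$ require pointwise traces of $u$ and of $(\partial_t-\Delta)^ju$ at $t=0^+$; the hypotheses \eqref{L3.1} and \eqref{L3.3} give only local integrability and nonnegativity on the \emph{open} set $\mathbb{R}^n\times(0,\infty)$, so these traces need not exist and the $m$-fold integration by parts down to $t=0$ is not justified. (A smaller point: for $\varphi^{-\lambda'/\lambda}|L^{*m}\varphi|^{\lambda'}$ to stay bounded you need $k\ge 2m\lambda'$, not merely $k>m\lambda'$.)

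The paper avoids all of this by never touching $t=0$: it fixes an arbitrary $t_0>0$, uses cutoffs $\varphi(x/R)^\gamma\psi((t-t_0)/R^2)^\gamma$ that equal $1$ at $t=t_0$, and, instead of integrating by parts $m$ times at once, iterates the one-step Lemma \ref{lem3.4} (quoted from \cite{QS}) $m$ times, applying it at step $j$ with $v=H^{m-j-1}u$ and $f=H^{m-j}u$, where $H=\partial_t-\Delta$. It is precisely here that \eqref{L3.1} and \eqref{L3.3} enter: each application needs $v$ and $f$ nonnegative and locally integrable so that the boundary contribution at $t=t_0$ can be discarded by sign. The resulting inequality \eqref{L3.8} has its right-hand integral over the annular region $Q_R$ only, which disposes of the critical case exactly as you intend, and the conclusion $u=0$ on $\mathbb{R}^n\times(t_0,\infty)$ for every $t_0>0$ yields \eqref{L3.6}. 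If you replace your $t=0$ boundary analysis by this $t_0>0$ anchoring and the iterated one-step lemma, the rest of your computation goes through.
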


\begin{rem}\label{1.2}
  Since $(\lambda,1)\in C$ if and only if $1<\lambda\le 1+2/n$, we see
  that a consequence of Lemma \ref{lem3.3} with $m=\alpha=K=1$ is
  Theorem \ref{thmA}.
\end{rem}

\subsection{The approximate and exact problems}\label{sec1.2}

If $u\in Y^p_\alpha$, where $\alpha$ and $p$ satisfy \eqref{ap}, is a
solution of the exact problem \eqref{ex1}, \eqref{ex2} then for all
$\beta>0$ so is 
\begin{equation}\label{ss1}
u_\beta(x,t):=\beta^{-\frac{2\alpha}{\lambda-1}}u(x/\beta,t/\beta^2).
\end{equation}
If, in addition, $u$ is self-similar, that is $u_\beta=u$ for all
$\beta>0$, then substituting $\beta=\sqrt{t}$, $t>0$, in \eqref{ss1},
we see that 
\begin{equation}\label{ss2}
u(x,t)=\begin{cases}
t^{-\frac{\alpha}{\lambda-1}}u(x/\sqrt{t},1)
&\text{for }(x,t)\in\mathbb{R}^n\times(0,\infty)\\
0&\text{for }(x,t)\in\mathbb{R}^n\times(-\infty,0].\\
\end{cases}
\end{equation}
Moreover, any function $u$ satisfying \eqref{ss2} is
self-similar. Inspired by \cite{HW}, we will seek in this section
solutions of the approximate problem \eqref{ap1}, \eqref{ap2} of the
form \eqref{ss2}.

We have no results for solutions $u\in Y^{p}_{\alpha}$ of the
approximate problem \eqref{ap1}, \eqref{ap2} when $p>1$ and
$(\lambda,\alpha)$ lies in the curve
\begin{equation}\label{curve}
 \alpha=\frac{n+2}{2p}\left(1-\frac{1}{\lambda}\right),\quad 1<\lambda<\infty,
\end{equation}
which is graphed in Figure \ref{fig1}.
Otherwise we have the following result.

\begin{thm}\label{thm1.2}
 Suppose $\lambda>0$ and either
 \begin{enumerate}
  \item[(i)] $\alpha$ and $p$ satisfy \eqref{ap}$_2$, or
  \item[(ii)] $\alpha$ and $p$ satisfy \eqref{ap}$_1$ and the point $(\lambda,\alpha)$ does not lie on the curve \eqref{curve}.
 \end{enumerate}
 Then there exist positive constants $C_1$ and $C_2$ such that the
 approximate problem \eqref{ap1}, \eqref{ap2} has a nontrivial
 nonnegative solution $u\in Y^{p}_{\alpha}$ if and only if
 $$(\lambda,\alpha)\in E.$$
In this case, such a solution is given by
\begin{equation}\label{ss3}
u(x,t)=\begin{cases}
t^{-\frac{\alpha}{\lambda-1}}w_\alpha(x/\sqrt{t})
&\text{for }(x,t)\in\mathbb{R}^n\times(0,\infty)\\
0&\text{for }(x,t)\in\mathbb{R}^n\times(-\infty,0]\\
\end{cases}
\end{equation}
where
\begin{equation}\label{ss4}
w_\alpha(z)=e^{-\frac{|z|^2}{4}}
(|z|^2
+1)^{-(\frac{n+2}{2}-\frac{\alpha\lambda}{\lambda-1})}
\quad\text{for }z\in\mathbb{R}^n.
\end{equation}
\end{thm}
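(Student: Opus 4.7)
The plan is to establish the \textbf{if} direction by an explicit construction using the ansatz \eqref{ss3}, \eqref{ss4}, and the \textbf{only if} direction by combining Theorem \ref{thm1.1} with separate arguments that rule out $(\lambda,\alpha)\in B$ and $(\lambda,\alpha)\in D$.

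For the \textbf{if} direction, assume $(\lambda,\alpha)\in E$ and let $u$ be given by \eqref{ss3}, \eqref{ss4}. First I would check that $u^\lambda\in X^p$: the substitution $\xi=\sqrt\tau y$ gives
\[
\int_0^T\!\!\int_{\mathbb{R}^n}u^{p\lambda}\,d\xi\,d\tau
=\Bigl(\int_{\mathbb{R}^n}w_\alpha^{p\lambda}\,dy\Bigr)\int_0^T\tau^{\frac{n}{2}-\frac{p\alpha\lambda}{\lambda-1}}\,d\tau;
\]
the Gaussian factor in $w_\alpha$ controls the $y$-integral, while the $\tau$-integral converges near $\tau=0$ precisely when $\alpha<\frac{n+2}{2p}(1-\frac{1}{\lambda})$, i.e.\ iff $(\lambda,\alpha)\in E$. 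Hence $v:=J_\alpha(u^\lambda)\in Y^p_\alpha$, and $v=0$ on $\mathbb{R}^n\times(-\infty,0]$ by property (P3). The strategy is to use $v$ itself as the solution: if one establishes the two-sided pointwise bound $c_1 u\le v\le c_2 u$ on $\mathbb{R}^n\times(0,\infty)$, then $(\partial_t-\Delta)^\alpha v=u^\lambda$ combined with the bound gives $c_2^{-\lambda}v^\lambda\le u^\lambda\le c_1^{-\lambda}v^\lambda$, so $v$ solves \eqref{ap1}, \eqref{ap2} with $C_1=c_2^{-\lambda}$ and $C_2=c_1^{-\lambda}$.

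The comparison $v\asymp u$ is the main technical obstacle. Parabolic scaling shows $J_\alpha$ is homogeneous of degree $2\alpha$ under $(x,t)\mapsto(\beta x,\beta^2 t)$, so $v$ is self-similar with the same exponent as $u$: $v(x,t)=t^{-\alpha/(\lambda-1)}\widetilde w(x/\sqrt t)$, reducing the comparison to $\widetilde w(z)\asymp w_\alpha(z)$ on $\mathbb{R}^n$. For $|z|$ bounded this is immediate by continuity and strict positivity. For $|z|\to\infty$, after substituting $\xi=\sqrt\tau y$ in the defining integral
\[
\widetilde w(z)=\int_0^1\frac{(1-\tau)^{\alpha-1-n/2}\tau^{n/2-\alpha\lambda/(\lambda-1)}}{\Gamma(\alpha)(4\pi)^{n/2}}\int_{\mathbb{R}^n}e^{-|z-\sqrt\tau y|^2/(4(1-\tau))}w_\alpha(y)^\lambda\,dy\,d\tau,
\]
I would combine the two Gaussian factors by completing the square in $y$; the $y$-Gaussian at its peak then contributes $e^{-\lambda|z|^2/(4(\lambda-(\lambda-1)\tau))}$, which near $\tau=0$ expands as $e^{-|z|^2/4}e^{-(\lambda-1)|z|^2\tau/(4\lambda)}$. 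Applying Laplace's method in $\tau$ localizes the dominant contribution at $\tau\sim |z|^{-2}$ and produces $C\,e^{-|z|^2/4}|z|^{-2\beta}$ with $\beta=\frac{n+2}{2}-\frac{\alpha\lambda}{\lambda-1}$, matching the tail of $w_\alpha$. The polynomial weight $(|y|^2+1)^{-\lambda\beta}$ stays bounded between constants over the relevant Gaussian window, and the exponent $\beta$ in \eqref{ss4} is chosen precisely to make this asymptotic matching work.

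For the \textbf{only if} direction, rescaling $u\mapsto cu$ converts the lower half of \eqref{ap1} into the super-inequality \eqref{sup1}, so Theorem \ref{thm1.1} forces $(\lambda,\alpha)\in B\cup D\cup E$. Under hypothesis (ii), $(\lambda,\alpha)$ does not lie on the curve \eqref{curve}, and it remains to exclude $B$ and $D$. Both exclusions exploit the upper bound $f\le C_2u^\lambda$ (with $f:=(\partial_t-\Delta)^\alpha u\in X^p$) together with the rigidity coming from $u\in Y^p_\alpha$. For $B$ (where $\lambda=1$), the linear sandwich $C_1 J_\alpha f\le f\le C_2 J_\alpha f$, paired with test functions $\phi\in C_c^\infty(\mathbb{R}^n\times(0,\infty))$ via the formal adjoint $(-\partial_t-\Delta)^\alpha$ and a choice of $\phi$ with high temporal frequency that makes $(-\partial_t-\Delta)^\alpha\phi/\phi$ unbounded, forces $u\equiv 0$. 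For $D$ (only nonempty when $p>1$), the constraint $f\asymp u^\lambda\in X^p$ forces $u\in L^{p\lambda}$ on every slab, while the parabolic Sobolev estimate for $J_\alpha$ gives $u=J_\alpha f\in L^q$ on every slab with $q=(n+2)p/(n+2-2\alpha p)>p\lambda$ in the strict interior of $D$; combining these with the natural parabolic rescaling $u_\beta(x,t)=\beta^{-2\alpha/(\lambda-1)}u(x/\beta,t/\beta^2)$, which preserves the approximate inequality with the same constants, yields the required contradiction.
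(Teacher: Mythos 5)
Your ``if'' direction follows the paper's route: the paper (Theorem \ref{thm4.5}) takes exactly $f=u^\lambda$ with $u$ as in \eqref{ss3}, verifies $f\in X^p$ iff $\alpha<\frac{n+2}{2p}(1-\frac1\lambda)$ by the same substitution $\xi=\sqrt{\tau}z$, reduces by self-similarity to the two-sided bound $J_\alpha f(\cdot,1)\asymp w_\alpha$, and obtains the large-$|x|$ asymptotics $e^{-|x|^2/4}|x|^{-2(\frac{n+2}{2}-\frac{\alpha\lambda}{\lambda-1})}$ by the same Gaussian-convolution/completing-the-square device (its identity \eqref{T5.10}) together with an explicit splitting of the $\tau$-integral that is the rigorous version of your Laplace-method localization at $\tau\sim|x|^{-2}$. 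That half is sound in outline.

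The ``only if'' direction has a genuine gap. The paper's argument is that a nontrivial solution of the approximate problem is, after scaling, simultaneously a solution of the super problem \eqref{sup1}, \eqref{sup2} \emph{and} of the sub problem \eqref{sub1}, \eqref{sub2}; Theorem \ref{thm1.1} then forces $(\lambda,\alpha)\in B\cup D\cup E$ while Theorem \ref{thm1.3} (already proved in \cite{T}, and the reason the curve \eqref{curve} is excluded in hypothesis (ii)) forces $(\lambda,\alpha)\in A\cup E$, and the intersection is $E$. You use only the super-problem half and then try to exclude $B$ and $D$ by hand, and neither substitute works as sketched. For $D$: you derive $u\in L^{p\lambda}$ and $u\in L^{q}$ on slabs with $q=\frac{(n+2)p}{n+2-2\alpha p}$, but in $D$ one has $q\ge p\lambda$, and two integrability statements with $q\ge p\lambda$ are perfectly compatible (indeed $L^q\subset L^{p\lambda}$ on bounded sets), so no contradiction is produced; the parabolic rescaling you invoke preserves both memberships and adds nothing. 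What is actually needed is a pointwise or integral \emph{lower} bound on $u$ forced by the upper inequality $(\partial_t-\Delta)^\alpha u\le C_2u^\lambda$ that is incompatible with $f\in X^p$ — this is the content of the sub-problem Theorem \ref{thm1.3}, which you never use. For $B$ ($\lambda=1$): the claim that a test function with ``high temporal frequency'' making $(-\partial_t-\Delta)^\alpha\phi/\phi$ unbounded forces $u\equiv0$ is not an argument — the ratio is trivially unbounded for any compactly supported $\phi$ since the operator is nonlocal, and you give no mechanism by which this yields $u\equiv0$. Also note that for $\lambda=1$ the constant rescaling $u\mapsto cu$ does not normalize $C_1$ to $1$, so even your reduction to \eqref{sup1} needs the parabolic scaling $u(x/\beta,t/\beta^2)$ there. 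The clean repair is simply to invoke Theorem \ref{thm1.3} as the paper does.
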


An immediate consequence of Theorem \ref{thm1.2} is the following corollary.

\begin{cor}\label{cor1.3}
 Suppose
 \begin{equation}\label{apl2}
  p\geq1,\quad 0<\alpha<\frac{n+2}{2p},\quad\text{ and }\quad\lambda>\frac{n+2}{n+2-2p\alpha}.
 \end{equation}
 Then there exist positive constants $C_1$ and $C_2$ such that a
 nontrivial nonnegative solution $u\in Y^{p}_{\alpha}$ of the
 approximate problem \eqref{ap1}, \eqref{ap2} is given by \eqref{ss3}
 where $w_\alpha$ is defined in \eqref{ss4}.
\end{cor}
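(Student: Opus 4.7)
The plan is to obtain Corollary \ref{cor1.3} as an essentially immediate consequence of the existence half of Theorem \ref{thm1.2}. Concretely, I would show that the hypotheses \eqref{apl2} entail both (a) that the pair $(p,\alpha)$ falls under clause (i) or (ii) of Theorem \ref{thm1.2}, and (b) that $(\lambda,\alpha)\in E$; once this is in place, Theorem \ref{thm1.2} supplies positive constants $C_1<C_2$ and certifies that the explicit function in \eqref{ss3}--\eqref{ss4} is a nontrivial nonnegative element of $Y^{p}_{\alpha}$ solving \eqref{ap1}, \eqref{ap2}.

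To settle (a), I would split on $p=1$ versus $p>1$. When $p=1$, the bound $0<\alpha<(n+2)/(2p)$ from \eqref{apl2} gives $0<\alpha<(n+2)/2$, which is stronger than $0<\alpha\leq(n+2)/(2p)$, so \eqref{ap}$_2$ holds and we are in clause (i). When $p>1$, the same bound reads $0<\alpha<(n+2)/(2p)$, which is precisely \eqref{ap}$_1$, placing us in the first half of clause (ii); the remaining requirement that $(\lambda,\alpha)$ avoid the curve \eqref{curve} will follow as a byproduct of step (b). To settle (b), I would note that since $0<\alpha<(n+2)/(2p)$ we have $n+2-2p\alpha>0$, and cross-multiplying the inequality $\lambda>(n+2)/(n+2-2p\alpha)$ yields the equivalent statement
\[
\alpha<\frac{n+2}{2p}\Bigl(1-\frac{1}{\lambda}\Bigr),
\]
which is strict and forces $\lambda>1$. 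This is exactly the defining inequality of the set $E$, and as a strict inequality it simultaneously excludes $(\lambda,\alpha)$ from the curve \eqref{curve}, completing clause (ii) in the case $p>1$.

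There is no genuine obstacle: the entire argument reduces to the one-line algebraic equivalence displayed above, and all of the analytic content---existence, nonnegativity, and membership in $Y^{p}_{\alpha}$, as well as the construction of $C_1$ and $C_2$---is absorbed into the invocation of Theorem \ref{thm1.2}.
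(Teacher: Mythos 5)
Your proposal is correct and matches the paper, which presents Corollary \ref{cor1.3} as an immediate consequence of Theorem \ref{thm1.2} without further argument; you have simply made explicit the routine verifications (the case split on $p=1$ versus $p>1$ for clauses (i)/(ii), and the algebraic equivalence of $\lambda>(n+2)/(n+2-2p\alpha)$ with the strict inequality defining $E$, which also keeps $(\lambda,\alpha)$ off the curve \eqref{curve}). All steps check out.
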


Since the conditions \eqref{apl2} hold if
\[
\alpha=1,\quad 1\leq p<\frac{n+2}{2},\quad\text{and}\quad\lambda>\frac{n+2}{n+2-2p},
\]
we see that Corollary \ref{cor1.3} can be viewed as an $\alpha\neq1$
version of the following $\alpha=1$ result in \cite{HW} for the exact
problem \eqref{ex1}, \eqref{ex2}.

\begin{altthm}[\cite{HW}]\label{thmB}
 Suppose
 \begin{equation*}
  1\leq p<\frac{n+2}{2}\quad\text{and}\quad\frac{n+2}{n+2-2p}<\lambda<
 \begin{cases}
  \frac{n+2}{n-2} & \text{if }n\geq3\\
  \infty & \text{if }n=1\text{ or }2.
 \end{cases}
 \end{equation*}
 Then a nontrivial nonnegative solution $u\in Y^p_1$ of
 the exact problem \eqref{ex1}, \eqref{ex2} with $\alpha=1$ is given by
\begin{equation}\label{ss5}
u(x,t)=\begin{cases}
t^{-\frac{1}{\lambda-1}}w(x/\sqrt{t})
&\text{for }(x,t)\in\mathbb{R}^n\times(0,\infty)\\
0&\text{for }(x,t)\in\mathbb{R}^n\times(-\infty,0]\\
\end{cases}
\end{equation}
for some positive radial function $w:\mathbb{R}^n\to\mathbb{R}$ such
that 
\[
\frac{w(z)}{w_1(z)}\quad\text{is bounded between positive constants on
} \mathbb{R}^n
\]
where $w_\alpha$ is defined in \eqref{ss4}.
\end{altthm}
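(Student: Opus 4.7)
The plan is to exploit the scaling symmetry of the exact problem. Substituting the self-similar ansatz $u(x,t) = t^{-1/(\lambda-1)}w(x/\sqrt{t})$ into $(\partial_t - \Delta)u = u^\lambda$ and setting $z = x/\sqrt{t}$ reduces the PDE to the autonomous elliptic equation
\[
\Delta w + \tfrac{1}{2}z\cdot\nabla w + \tfrac{1}{\lambda-1}w + w^\lambda = 0\quad\text{in }\mathbb{R}^n.
\]
Restricting to radial profiles $w = w(r)$ with $r=|z|$ and multiplying by $r^{n-1}e^{r^2/4}$ puts the problem into the self-adjoint form
\[
\bigl(r^{n-1}e^{r^2/4}w'\bigr)' + r^{n-1}e^{r^2/4}\bigl(\tfrac{1}{\lambda-1}w + w^\lambda\bigr) = 0,\quad w(0)=a,\ w'(0)=0.
\]

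The main step is a shooting argument in the parameter $a>0$, following Haraux and Weissler. For each $a$ let $w_a$ be the local solution, and partition $(0,\infty)$ into three sets: the sign-change set $N$ of those $a$ for which $w_a$ vanishes at some finite $r$; the slow-decay set $S$ of those $a$ for which $w_a>0$ on $[0,\infty)$ with polynomial tail $w_a(r)\sim c_a\,r^{-2/(\lambda-1)}$; and the fast-decay set $F$ of those $a$ for which $w_a>0$ with Gaussian tail $w_a(r)\sim c_a\,r^{2/(\lambda-1)-n}e^{-r^2/4}$. Continuous dependence on $a$ and the separation of the two decay rates of the linearization at infinity make $N$ and $S$ open. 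For small $a$ one verifies $w_a\in S$ by linear perturbation; for large $a$ a Pohozaev-type integral identity for the weighted ODE, which uses essentially the Sobolev subcritical bound $\lambda<(n+2)/(n-2)$, forces $w_a\in N$. Connectedness of $(0,\infty)$ then gives $F\neq\emptyset$, producing a positive radial fast-decay profile $w$.

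The sharp asymptotics $w(r) = (c+o(1))e^{-r^2/4}r^{2/(\lambda-1)-n}$ follow from standard ODE asymptotics applied to the equation at infinity. Combined with the identity $2\bigl(\frac{n+2}{2}-\frac{\lambda}{\lambda-1}\bigr) = n - \frac{2}{\lambda-1}$ and the positivity and smoothness of both $w$ and $w_1$ on compacta, this yields that $w(z)/w_1(z)$ is bounded between positive constants on all of $\mathbb{R}^n$.

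Finally, to confirm $u\in Y^p_1$, by property (P3) and $u\equiv 0$ on $\mathbb{R}^n\times(-\infty,0]$ it suffices to verify $u^\lambda = (\partial_t-\Delta)u \in X^p$. A scaling computation yields
\[
\int_0^T\!\!\int_{\mathbb{R}^n}u^{p\lambda}\,dx\,dt = \Bigl(\int_0^T t^{\frac{n}{2}-\frac{p\lambda}{\lambda-1}}\,dt\Bigr)\int_{\mathbb{R}^n}w(z)^{p\lambda}\,dz,
\]
which is finite for each $T$: the Gaussian decay of $w$ controls the $z$-integral, and the hypothesis $\lambda>(n+2)/(n+2-2p)$ makes the $t$-integral converge at $t=0$. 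The principal obstacle is the shooting analysis, specifically the use of the Sobolev subcritical hypothesis through a Pohozaev-type identity to show that $N$ is nonempty so that a fast-decay orbit in $F$ can be isolated.
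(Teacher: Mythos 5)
The first thing to note is that the paper contains no proof of Theorem \ref{thmB}: it is quoted from Haraux and Weissler \cite{HW} (hence the citation in the theorem header) and serves only to motivate Corollary \ref{cor1.3} and the open question about the exact problem. So there is no in-paper argument to measure your proposal against; what can be assessed is whether your sketch would reconstruct the literature proof. On that score the skeleton is right: substituting the self-similar ansatz does yield the profile equation $\Delta w+\tfrac12 z\cdot\nabla w+\tfrac{1}{\lambda-1}w+w^\lambda=0$; the fast-decay rate $r^{2/(\lambda-1)-n}e^{-r^2/4}$ does match $w_1$ because $2\bigl(\tfrac{n+2}{2}-\tfrac{\lambda}{\lambda-1}\bigr)=n-\tfrac{2}{\lambda-1}$; and your scaling computation correctly identifies $\lambda>(n+2)/(n+2-2p)$ as the condition for $u^\lambda\in X^p$. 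As a historical point, Haraux and Weissler's own existence argument is variational rather than a shooting argument: they minimize in the weighted space with weight $e^{|z|^2/4}$, where subcriticality $\lambda<(n+2)/(n-2)$ gives compactness of the relevant embedding and $\lambda>1+2/n$ gives the needed coercivity; the radial shooting dichotomy you describe is the later Peletier--Terman--Weissler treatment. Either route is legitimate.

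That said, as written your proposal is a program rather than a proof. The three claims that carry all the difficulty --- openness of the slow-decay set $S$, the Pohozaev-type identity forcing $w_a$ to change sign for large $a$ (where subcriticality enters), and the sharp Gaussian asymptotics of the fast-decay orbit --- are each asserted in one sentence with no argument, and none of them is routine. There is also a smaller gap at the end: to conclude $u\in Y^p_1=J_1(X^p)$ it is not enough to check $u^\lambda\in X^p$; one must verify the integral identity $u=J_1(u^\lambda)$, i.e.\ that the classical self-similar solution agrees with the Duhamel integral of its own nonlinearity with zero data at $t=-\infty$. This follows from the decay of $w$ together with a uniqueness statement for the heat equation in the appropriate class, but it needs to be said, since property (P3) by itself only transfers the initial condition between $u$ and $f=(\partial_t-\Delta)u$ once $u$ is already known to lie in $Y^p_1$.
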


We have no results for the exact problem, but Corollary \ref{cor1.3}
and Theorem \ref{thmB} motivate the following open question.
\medskip

\noindent{\bf Open question}. Suppose $\alpha$ and $p$ satisfy \eqref{ap} so
that the operator \eqref{Ha} is defined. For what $\lambda>0$ does
there exist a nontrivial nonnegative solution $u\in Y^p_\alpha$ of the
exact problem \eqref{ex1}, \eqref{ex2}?
\medskip

Since any solution of the exact problem \eqref{ex1}, \eqref{ex2} is
also a solution of the approximate problem \eqref{ap1}, \eqref{ap2}, it
follows from Theorem \ref{thm1.2} that a necessary condition on
$\lambda$ is that either $p>1$ and $(\lambda,\alpha)$ lies on the
curve \eqref{curve} or $(\lambda,\alpha)\in E$ (see Figure \ref{fig1}).

\subsection{The sub problem}\label{sec1.3}
We have no results for solutions $u\in Y^{p}_{\alpha}$ of the sub
problem \eqref{sub1}, \eqref{sub2} when $p\geq 1$ and the point
$(\lambda,\alpha)$ lies on the curve \eqref{curve}.  Otherwise we have
the following result from \cite{T}.

\begin{thm}\label{thm1.3}
  Suppose $\alpha$ and $p$ satisfy \eqref{ap}, $\lambda>0$, and the point
  $(\lambda,\alpha)$ does not lie on the curve \eqref{curve}.  Then
  the sub problem \eqref{sub1}, \eqref{sub2} has a nontrivial
  nonnegative solution $u\in Y^{p}_{\alpha}$ if and only if
 \[
(\lambda,\alpha)\in A\cup E.
\]
\end{thm}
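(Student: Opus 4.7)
The plan is to prove both directions of the biconditional by a case-by-case analysis in the $\lambda\alpha$-plane. For existence in region $E$, I apply Theorem \ref{thm1.2} to obtain a nontrivial nonnegative self-similar solution $u_0\in Y^p_\alpha$ of the approximate problem with some constants $C_1,C_2>0$. Since $\lambda>1$, setting $\beta=C_2^{1/(\lambda-1)}$ and $u=\beta u_0$ yields $(\partial_t-\Delta)^\alpha u=\beta(\partial_t-\Delta)^\alpha u_0\leq\beta C_2 u_0^\lambda=u^\lambda$ together with $(\partial_t-\Delta)^\alpha u\geq\beta C_1 u_0^\lambda\geq 0$, so $u$ is a nontrivial nonnegative sub-problem solution in $Y^p_\alpha$.

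For existence in region $A$, where $0<\lambda<1$, I construct an explicit solution of the form $u=\varepsilon J_\alpha f$ with $f(x,t)=\phi(x)\psi(t)$, where $\phi\in C_c^\infty(\mathbb{R}^n)$ is nonnegative and nontrivial and $\psi\in C^\infty(\mathbb{R})$ is compactly supported in $[0,2]$, positive on $(0,2)$, and satisfies $\psi(t)=t^{\lambda\alpha/(1-\lambda)}$ for small $t>0$. A direct heat-kernel computation shows that $J_\alpha f(x,t)\sim c\,\phi(x)\,t^{\alpha/(1-\lambda)}$ as $t\to 0^+$ on the interior of $\mathrm{supp}(\phi)$, and more generally that $(J_\alpha f)^\lambda/f$ is bounded below by a positive constant throughout $\mathrm{supp}(f)$, the power of $\psi$ having been tuned precisely for this cancellation. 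Because $1-\lambda>0$, shrinking $\varepsilon$ then produces $(\partial_t-\Delta)^\alpha u=\varepsilon f\leq u^\lambda$ and keeps $u\in Y^p_\alpha$ nontrivial and nonnegative.

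For the nonexistence direction, assume toward a contradiction that $u\in Y^p_\alpha$ is a nontrivial nonnegative sub-problem solution and set $f=(\partial_t-\Delta)^\alpha u\in X^p$, so that $0\leq f\leq (J_\alpha f)^\lambda$ and $f=0$ on $\mathbb{R}^n\times(-\infty,0)$ by property (P3). In region $B$, where $\lambda=1$, the inequality reduces to $f\leq J_\alpha f$; iterating with the semigroup identity $\Phi_\alpha*\Phi_\beta=\Phi_{\alpha+\beta}$ gives $f\leq J_{k\alpha} f$ for every $k\geq 1$, and combining this chain with the $L^p$ integrability of $f$ on every half-space and a Gronwall-type Volterra estimate on space-averaged quantities forces $f\equiv 0$. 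In regions $C$ and $D$ I would adapt the Mitidieri--Pohozaev capacity method to the fractional parabolic setting: test $f\leq(J_\alpha f)^\lambda$ against a parabolically rescaled time-reversed Gaussian-type test function $\varphi_R$, apply H\"older with conjugate exponents $(\lambda,\lambda/(\lambda-1))$ to absorb the nonlinear term, and let $R\to\infty$. The main obstacle is region $D$: here a pure Fujita-type scaling of the remainder is marginal, so one must invoke the $L^p$ structure encoded in $Y^p_\alpha$ (unused for the super problem of Theorem \ref{thm1.1}) to gain the additional decay needed to push the contradiction down to the sharp curve $\alpha=\frac{n+2}{2p}(1-1/\lambda)$.
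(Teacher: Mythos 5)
First, a point of orientation: the paper does not prove Theorem \ref{thm1.3} here at all. It reduces it to its $J_\alpha$ formulation (Theorem \ref{thm2.3}) via properties (P1)--(P3) and then cites \cite{T} for that result, so your attempt can only be measured against the argument in \cite{T}. Your existence half is essentially sound: for $(\lambda,\alpha)\in E$ the rescaling $u=C_2^{1/(\lambda-1)}u_0$ of the approximate-problem solution is exactly the observation the paper itself makes after Theorem \ref{thm4.4} (``any solution of the approximate $J_\alpha$ problem is, after scaling, also a solution of the \dots sub $J_\alpha$ problem''), and your region-$A$ construction with $\psi(t)=t^{\lambda\alpha/(1-\lambda)}$ is the right kind of object, although the claim that $(J_\alpha f)^\lambda/f$ is bounded below on all of $\mathrm{supp}\,f$ is asserted rather than proved and is genuinely delicate where $\phi(x)\to 0$ and $t\to 0^+$ simultaneously (it is cleaner to take $\phi$ Gaussian, as the paper does in its analogous constructions, so that the convolutions are exact).

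The genuine gap is the nonexistence half for $\lambda>1$, i.e.\ regions $C$ and $D$, which is the heart of the theorem. The Mitidieri--Pohozaev test-function method you invoke is a method for \emph{super}solutions: one needs $u^\lambda\le (\partial_t-\Delta)^m u$ so that, after integrating by parts onto $\varphi_R$ and applying H\"older, the nonlinear integral reappears on the right with exponent $1/\lambda<1$ and can be absorbed. For the sub problem the inequality runs the other way, $f\le (J_\alpha f)^\lambda$, and testing it produces $\int f\varphi_R\le\int (J_\alpha f)^\lambda\varphi_R$, which H\"older cannot close; this is precisely why the paper uses that method (Lemma \ref{lem3.3}, after \cite[Theorem 18.1]{QS}) only for the super problem in region $C$ (Theorem \ref{thm4.3}). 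Moreover the test-function method sees only $L^\lambda_{\mathrm{loc}}$ regularity and is blind to $p$, so it cannot possibly produce the $p$-dependent threshold curve \eqref{curve}; you acknowledge this for region $D$ but offer no mechanism, and in fact the same objection already defeats the argument in region $C$. The correct mechanism (this is the content of \cite{T}) is a pointwise blow-up estimate: if $v=J_\alpha f$ is nontrivial with initial time $a$, then $v\le J_\alpha(v^\lambda)$ forces $\sup_x v(x,t)\gtrsim (t-a)^{-\alpha/(\lambda-1)}$ on a parabolic region as $t\to a^+$, and this rate of blow-up is incompatible with the local integrability that $J_\alpha$ imparts to functions of $X^p$ exactly when $\alpha\ge\frac{n+2}{2p}\left(1-\frac1\lambda\right)$. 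Your region-$B$ sketch is salvageable, but by the pointwise iteration $f\le J_{k\alpha}f\le \frac{t^{k\alpha-1}}{\Gamma(k\alpha)}J_1f\to 0$ (using $J_1f<\infty$ a.e.\ from Lemma \ref{new}) rather than by spatial averaging, which fails for $p>1$ since $\int_{\mathbb{R}^n}f(x,t)\,dx$ need not be finite.
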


\section{$J_\alpha$ version of results}\label{sec3}
We define the $J_\alpha$ versions of the super, approximate, exact, and
sub problems in Section \ref{introduction} to be respectively the problems

\begin{numcases}{}
f\geq(J_\alpha f)^\lambda 
&in $\mathbb{R}^n \times\mathbb{R}$ \label{Jsup1}\\
 f=0 &in $\mathbb{R}^n \times(-\infty,0)$; \label{Jsup2}
\end{numcases}

\begin{numcases}{}
C_1 f\leq(J_\alpha f)^\lambda \leq C_2 f 
&in $\mathbb{R}^n \times\mathbb{R}$ \label{Jap1}\\
 f=0 &in $\mathbb{R}^n \times(-\infty,0)$; \label{Jap2}
\end{numcases}

\begin{numcases}{}
 f=(J_\alpha f)^\lambda 
&in $\mathbb{R}^n \times\mathbb{R}$ \label{Jex1}\\
 f=0 &in $\mathbb{R}^n \times(-\infty,0)$; \label{Jex2}
\end{numcases}
and

\begin{numcases}{}
0\leq f\leq(J_\alpha f)^\lambda 
&in $\mathbb{R}^n \times\mathbb{R}$ \label{Jsub1}\\
 f=0 &in $\mathbb{R}^n \times(-\infty,0)$; \label{Jsub2}
\end{numcases}
which we will refer to respectively as the super $J_\alpha$ problem,
approximate $J_\alpha$ problem, exact $J_\alpha$ problem, and
sub $J_\alpha$ problem (or collectively as the $J_\alpha$ problems).

If $\alpha$ and $p$ satisfy \eqref{ap} and $\lambda>0$ then by
properties (P1)--(P3) in Section \ref{introduction} of $ J_\alpha$ and the definition of
the fractional heat operator \eqref{Ha}, $u$ is a nonnegative solution
in $Y^{p}_{\alpha}$ of the super (approximate, exact, sub) problem in
Section \ref{introduction} if and only if
$$f:=(\partial_t -\Delta)^\alpha u$$
is a nonnegative solution in $X^p$ of the super (approximate, exact,
sub) $J_\alpha$ problem in this section. (However, the positive
constants $C_1$ and $C_2$ in \eqref{ap1}, \eqref{ap2} may be different
than the positive constants $C_1$ and $C_2$ in \eqref{Jap1},
\eqref{Jap2}.)

Since we will only consider solutions $f$ of the $J_\alpha$ problems
which are nonnegative on $\mathbb{R}^n \times\mathbb{R}$, $J_\alpha f$
in these problems will always be a well-defined nonnegative extended
real valued function on $\mathbb{R}^n \times\mathbb{R}$ even when the
condition \eqref{ap} is replace with the weaker condition that
\begin{equation}\label{apalt}
 p\in[1,\infty)\quad\text{and}\quad\alpha>0.
\end{equation}
Hence in this section we study the $J_\alpha$ problems with condition
\eqref{ap} replaced with \eqref{apalt}.  However our results in this
section for the $J_\alpha$ problems will only yield corresponding
results for the original versions of these problems in Section
\ref{introduction} when \eqref{ap} holds, for otherwise the fractional
heat operators in these original problems are not defined.  (For a
more detailed discussion of the properties of $J_\alpha$ when
\eqref{ap} does not hold see \cite[Section 4]{T}.)

Under the equivalence discussed above of the $J_\alpha$ problems and
original versions of these problems in Section \ref{introduction}, the
following Theorems \ref{thm2.1}--\ref{thm2.3}, when restricted to the
case that \eqref{ap} holds, clearly imply Theorem
\ref{thm1.1}--\ref{thm1.3} respectively.

\begin{thm}\label{thm2.1}
  Suppose $p$ and $\alpha$ satisfy \eqref{apalt} and $\lambda>0$.
  Then the super $J_\alpha$ problem \eqref{Jsup1}, \eqref{Jsup2} has a
  nontrivial nonnegative solution $f\in X^p$ if and only if
 $$(\lambda,\alpha)\in B\cup D\cup E.$$
\end{thm}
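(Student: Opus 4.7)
The plan is to handle the two directions of the biconditional by partitioning the parameter plane according to Figure~\ref{fig1}, treating $A\cup C$ for nonexistence and $B\cup D\cup E$ for existence.

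For nonexistence when $(\lambda,\alpha)\in C$, I would reduce to Lemma~\ref{lem3.3}. Given $f\in X^p$ satisfying \eqref{Jsup1}--\eqref{Jsup2}, set $u=J_\alpha f$ and, for any integer $m\ge\alpha$, let $v=J_m f=J_{m-\alpha}u$. Then $(\partial_t-\Delta)^j v=J_{m-j}f$ is nonnegative and $L^1_{\text{loc}}$ for $j=0,\dots,m-1$. The kernel identity $\Phi_m(x,t)=\frac{\Gamma(\alpha)}{\Gamma(m)}t^{m-\alpha}\Phi_\alpha(x,t)$, combined with $(t-s)^{m-\alpha}\le(t+1)^{m-\alpha}$ for $0<s<t$, yields the pointwise estimate $v(x,t)\le K^{-1}(t+1)^{m-\alpha}u(x,t)$ with $K=\Gamma(m)/\Gamma(\alpha)$. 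Hence $(\partial_t-\Delta)^m v=f\ge u^\lambda\ge\bigl(K(t+1)^{-(m-\alpha)}v\bigr)^\lambda$, and Lemma~\ref{lem3.3} forces $v\equiv 0$, so $f\equiv 0$. For nonexistence when $(\lambda,\alpha)\in A$ ($0<\lambda<1$), I would exploit the sublinear bound $u^\lambda\ge\varepsilon^{\lambda-1}u$ wherever $u\le\varepsilon$, together with the vanishing of $u$ on $\{t\le 0\}$, iterating $u\ge\varepsilon^{\lambda-1}J_\alpha u$ to drive $u$ into a self-improving lower bound inconsistent with $u\in Y^p_\alpha$.

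For existence when $(\lambda,\alpha)\in E$, I would invoke the $J_\alpha$-version of Theorem~\ref{thm1.2} for the approximate problem to obtain $f\in X^p$ with $(J_\alpha f)^\lambda\le C_2 f$, and then rescale $f\mapsto\varepsilon f$ with $\varepsilon^{\lambda-1}C_2\le 1$ (possible because $\lambda>1$) to upgrade to a super solution. For existence when $(\lambda,\alpha)\in D$, I would pick $p'\in(1,p)$ satisfying $\alpha<\frac{n+2}{2p'}(1-1/\lambda)$---this is possible exactly because $\alpha<\frac{n+2}{2}(1-1/\lambda)$ in $D$---so that $(\lambda,\alpha)\in E$ relative to $p'$; apply Theorem~\ref{thm1.2} with parameter $p'$ to obtain a self-similar $f$ whose spatial profile has Gaussian decay, so that $f(\cdot,t)\in L^p$ for each $t>0$; and then truncate by $f_{\text{trunc}}(x,t):=f(x,t)\chi_{(1,\infty)}(t)$. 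The truncation removes the non-integrable singularity at $t=0$ so that $f_{\text{trunc}}\in X^p$, while the monotonicity $J_\alpha f_{\text{trunc}}\le J_\alpha f$ preserves $(J_\alpha f_{\text{trunc}})^\lambda\le C_2 f_{\text{trunc}}$ on $\{t>1\}$, after which scaling produces the super solution. For existence when $(\lambda,\alpha)\in B$ ($\lambda=1$), I would construct $f$ directly, e.g.\ as a time-weighted combination of translated fractional heat kernels arranged so that $f\in X^p$ and $f\ge J_\alpha f$.

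The main obstacle is the existence case $D$: the self-similar ansatz from Theorem~\ref{thm1.2} just fails to land in $X^p$, so one must carefully combine the $p'$-reduction with truncation while preserving both nonnegativity and the upper bound $(J_\alpha f)^\lambda\le C_2 f$. Secondary challenges are making the sublinear iteration for $A$ sharp enough to contradict membership in $Y^p_\alpha$ and checking that the kernel comparison used in the reduction to Lemma~\ref{lem3.3} applies for every $(\lambda,\alpha)\in C$.
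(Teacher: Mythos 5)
Your overall architecture matches the paper's: nonexistence on $A\cup C$, explicit construction on $B\cup D\cup E$. Your reduction of region $C$ to Lemma~\ref{lem3.3} via $v=J_m f$ and the kernel comparison $\Phi_m=\frac{\Gamma(\alpha)}{\Gamma(m)}t^{m-\alpha}\Phi_\alpha$ is exactly the paper's Theorem~\ref{thm4.3} (the paper takes the specific $m$ with $\alpha\le m<\alpha+1$, and the nonnegativity and local integrability of $H^j v=J_{m-j}f$ are supplied by Lemma~\ref{new}). For $D\cup E$ the paper is simpler and more uniform than your plan: it exhibits the single supersolution $f(x,t)=A\Phi_\beta(x,t+T)\chi_{[0,\infty)}(t)$ with $\beta=\frac{n+2}{2}-\frac{\lambda\alpha}{\lambda-1}>0$ (Theorem~\ref{thm4.4}), which because of the time translation lies in $X^p$ for every $p\in[1,\infty)$; this covers $D$ and $E$ at once and avoids both the appeal to the approximate problem and your $p'$-reduction-plus-truncation. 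That said, your route for $D$ and $E$ does appear to work: truncating the self-similar solution of Theorem~\ref{thm4.5} by $\chi_{(1,\infty)}(t)$ removes the only obstruction to $X^p$ membership (the singularity at $t=0$), monotonicity of $J_\alpha$ preserves the upper bound $(J_\alpha f)^\lambda\le C_2 f$, and the scaling $\varepsilon^{\lambda-1}C_2\le1$ upgrades it to a supersolution; with the truncation in hand the $p'$ detour is actually unnecessary. Region $B$ is handled in the paper by Theorem~\ref{thm4.2} with $f(x,t)=e^{a(t+T)}\Phi_1(x,t+T)\chi_{[0,\infty)}(t)$ for $a$ large, which is the construction you gesture at.

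The genuine gap is region $A$. Your proposed mechanism --- linearizing via $u^\lambda\ge\varepsilon^{\lambda-1}u$ on $\{u\le\varepsilon\}$ and iterating $u\ge\varepsilon^{\lambda-1}J_\alpha u$ --- does not close: the linearized inequality is available only where $u\le\varepsilon$, so as soon as the putative lower bound exceeds $\varepsilon$ the hypothesis of the linearization fails and the iteration cannot self-improve past $\varepsilon$; nor is it clear what universal limit it would converge to. What the paper proves (Theorem~\ref{thm4.1}) rests on two ingredients you do not supply: (i) the kernel lower bound of Lemma~\ref{lem3.5}, which converts a lower bound of the form $(b_0 t^\alpha)^{\lambda/(1-\lambda)}$ on a parabolic region $\Omega(0,0,T)$ into a lower bound of the same form with new constant $b_1$, where the relation $b_1/N_0=(b_0/N_0)^\lambda$ involves a constant $N_0=N_0(n,\lambda,\alpha)$ independent of the solution; and (ii) the observation that, because $0<\lambda<1$, the recursion $b_j/N_0=(b_{j-1}/N_0)^\lambda$ converges to $1$ from any positive starting value, so an arbitrary solution-dependent constant is upgraded to the universal one. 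A second such iteration yields the spatially uniform bound $f(x,t)\ge(M(t-a)^\alpha)^{\lambda/(1-\lambda)}$ on $\mathbb{R}^n\times(a,\infty)$, and it is precisely this uniformity in $x$ that contradicts $f\in X^p$ (a function bounded below by a positive constant on $\mathbb{R}^n\times(a+1,a+2)$ cannot lie in $L^p(\mathbb{R}^n\times(-\infty,T))$). Without an argument producing a lower bound uniform over all of $\mathbb{R}^n$, mere positivity of $u=J_\alpha f$ is not inconsistent with $f\in X^p$ or $u\in Y^p_\alpha$, so your case $A$ as sketched does not reach the conclusion.
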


\begin{thm}\label{thm2.2}
  Suppose $\lambda,\alpha>0$ and either
 \begin{enumerate}
  \item[(i)] $p=1$ or
  \item[(ii)] $p\in(1,\infty)$ and the point $(\lambda,\alpha)$ does
    not lie of the curve \eqref{curve}.
 \end{enumerate}
 Then there exist positive constants $C_1$ and $C_2$ such that the
 approximate $J_\alpha$ problem \eqref{Jap1}, \eqref{Jap2} has a
 nontrivial nonnegative solution $f\in X^p$ if and only if
 \[
(\lambda,\alpha)\in E.
\]
In this case, such a solution is given by $f=u^\lambda$, where $u$ is
defined by \ref{ss3}.
\end{thm}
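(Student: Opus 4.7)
My plan is to handle the biconditional by establishing sufficiency via an explicit self-similar construction and necessity by reducing to Theorem \ref{thm2.1} together with two auxiliary arguments.

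\textbf{Sufficiency for $(\lambda,\alpha)\in E$.} Take $u$ as in \eqref{ss3} and set $f:=u^\lambda$. First I verify $f\in X^p$: substituting $y=x/\sqrt{t}$ separates the $L^p$-integral into a finite Gaussian spatial integral of $w_\alpha^{\lambda p}$ and a temporal integral $\int_0^T t^{n/2-\alpha\lambda p/(\lambda-1)}dt$, which converges at the origin precisely because $\alpha<\tfrac{n+2}{2p}(1-1/\lambda)$. Next, the substitutions $\tau=ts$ and $\xi=\sqrt{ts}\,y$ in the definition \eqref{I.7} of $J_\alpha$ show that $J_\alpha(u^\lambda)$ inherits the self-similar homogeneity of $u$, so that
\[
J_\alpha(u^\lambda)(x,t)=t^{-\alpha/(\lambda-1)}W(x/\sqrt{t}),\qquad W(z):=J_\alpha(u^\lambda)(z,1).
\]
Consequently the pointwise inequalities \eqref{Jap1} for all $(x,t)$ reduce to the single time-independent comparison $C_1^{1/\lambda}w_\alpha(z)\le W(z)\le C_2^{1/\lambda}w_\alpha(z)$ on $\mathbb{R}^n$, from which appropriate constants $0<C_1<C_2$ can be selected.

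\textbf{The pointwise comparison $W\asymp w_\alpha$ is the main obstacle.} The ansatz \eqref{ss4} is engineered to make it hold: completing the square in the Gaussian product $e^{-|z-\sqrt{s}\,y|^2/(4(1-s))}e^{-\lambda|y|^2/4}$ reproduces the exterior factor $e^{-|z|^2/4}$ matching $w_\alpha$, and the exponent $\beta:=\tfrac{n+2}{2}-\tfrac{\alpha\lambda}{\lambda-1}$ is tuned so that the residual $(s,y)$-integral produces $(|z|^2+1)^{-\beta}$ up to bounded positive factors. To make this rigorous I would split the integration domain and argue separately in the regimes $|z|\le 1$ (where $w_\alpha\asymp 1$ and direct bounds suffice) and $|z|\to\infty$ (where I would perform Laplace-type asymptotics around the minimizer $s_*=s_*(|z|)$ of the combined exponent to extract the leading order $W(z)\sim C\,e^{-|z|^2/4}|z|^{-2\beta}$); matching lower bounds are obtained by restricting the $(s,y)$-integration to a subregion on which both $\Phi_\alpha(z-\sqrt{s}\,y,1-s)$ and $w_\alpha(y)^\lambda$ are explicitly bounded below.

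\textbf{Necessity.} A solution of the approximate $J_\alpha$ problem is a fortiori a solution of the super $J_\alpha$ problem, so Theorem \ref{thm2.1} immediately rules out $(\lambda,\alpha)\in A\cup C$. For $(\lambda,\alpha)\in B$ (i.e.\ $\lambda=1$), taking $L^p_x$-norms in the lower bound $J_\alpha f\ge C_1 f$ and applying Young's inequality in the spatial variable yields $\phi*k_\alpha\ge C_1\phi$, where $\phi(t):=\|f(\cdot,t)\|_{L^p_x}$ and $k_\alpha(t):=t^{\alpha-1}\chi_{(0,\infty)}/\Gamma(\alpha)$. Iterating via the semigroup identity $k_\alpha*k_\alpha=k_{2\alpha}$ gives $\phi*k_{k\alpha}\ge C_1^k\phi$ for every integer $k\ge1$; integrating over $(0,T)$ and using Fubini then produces
\[
\frac{T^{k\alpha}}{\Gamma(k\alpha+1)}\,\|\phi\|_{L^1(0,T)}\ge C_1^k\,\|\phi\|_{L^1(0,T)},
\]
which fails for $k$ large by Stirling unless $\phi\equiv 0$, contradicting nontriviality of $f$. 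Finally, for $(\lambda,\alpha)\in D$ under hypothesis (ii), I use the parabolic scaling $f_\beta(x,t):=\beta^{2\alpha\lambda/(\lambda-1)}f(\beta x,\beta^2 t)$, which preserves \eqref{Jap1} with the same constants: the $X^p$-norm of $f_\beta$ rescales by the strictly positive power $\beta^{2\alpha\lambda/(\lambda-1)-(n+2)/p}$, and combining this scaling identity with the pointwise bound $C_1^{1/\lambda}f^{1/\lambda}\le J_\alpha f$ applied at Lebesgue points of $f$ yields a pointwise lower bound on $f$ of self-similar type whose $X^p$-norm diverges in $D$, contradicting $f\in X^p$.
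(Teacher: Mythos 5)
Your existence half and your treatment of regions $A$, $B$, $C$ are sound. The construction $f=u^\lambda$ with the profile \eqref{ss4} is exactly the paper's Theorem \ref{thm4.5}, including the reduction by self-similarity to the single comparison $W\asymp w_\alpha$ and the case split $|z|\lesssim 1$ versus $|z|\to\infty$ (the paper carries this out via the Gaussian convolution identity $g_a*g_b=c\,g_{a+b}$ and an explicit change of variables rather than Laplace asymptotics, but your outline is the same estimate). Your convolution/Gamma-function argument for $B$ is correct and is genuinely different from the paper, which instead observes that a solution of \eqref{Jap1} is, after a parabolic dilation and multiplication by a constant, a solution of the sub $J_\alpha$ problem \eqref{Jsub1}, \eqref{Jsub2}, and then invokes Theorem \ref{thm2.3}; your route for $B$ is more elementary and self-contained. (Minor point: "a fortiori a solution of the super problem" should be "after multiplying $f$ by a suitable constant," since the super problem \eqref{Jsup1} carries no constant; this is harmless.)

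The genuine gap is region $D$. There you assert that the scaling identity for $f_\beta$ together with $C_1^{1/\lambda}f^{1/\lambda}\le J_\alpha f$ "yields a pointwise lower bound on $f$ of self-similar type whose $X^p$-norm diverges," but no such bound is derived, and it cannot be extracted in one step. A single application of the two inequalities at a Lebesgue point gives $J_\alpha f\gtrsim c\,\Phi_\alpha$ in the forward cone and hence $f\ge C_2^{-1}(J_\alpha f)^\lambda\gtrsim\Phi_\alpha^\lambda$, whose decay exponent does not match the critical self-similar rate $t^{-\alpha\lambda/(\lambda-1)}$; reaching that rate requires iterating the map $f\mapsto C_2^{-1}(J_\alpha f)^\lambda$ and controlling the limit of the resulting exponents and constants. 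That bootstrap is precisely the nonexistence proof for the sub problem in region $D$, i.e. the hard part of Theorem \ref{thm2.3} proved in \cite{T}. The fix is either to carry out that iteration in full, or to do what the paper does: note that the left inequality in \eqref{Jap1}, after replacing $f(x,t)$ by $c\,f(\beta x,\beta^2 t)$ for suitable $c,\beta>0$, becomes $0\le f\le (J_\alpha f)^\lambda$, so Theorem \ref{thm2.3} (applicable under your hypothesis (ii), which keeps $(\lambda,\alpha)$ off the curve \eqref{curve}) rules out $B\cup C\cup D$ at once; combined with Theorem \ref{thm2.1} ruling out $A\cup C$, this closes the necessity direction.
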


\begin{thm}\label{thm2.3}
  Suppose $\alpha$ and $p$ satisfy \eqref{apalt}, $\lambda>0$, and the
  point $(\lambda,\alpha)$ does not lie on the curve \eqref{curve}.
  Then the sub $J_\alpha$ problem \eqref{Jsub1}, \eqref{Jsub2} has a
  nontrivial nonnegative solution $f\in X^p$ if and only if
 $$(\lambda,\alpha)\in A\cup E.$$
\end{thm}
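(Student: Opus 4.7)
The plan splits along the partition $A\cup E$ versus $B\cup C\cup D$ (with the curve \eqref{curve} excluded).

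For existence when $(\lambda,\alpha)\in E$, I would bootstrap from Theorem~\ref{thm2.2}: it produces a nontrivial $\tilde f\in X^p$ with $C_1\tilde f\le(J_\alpha\tilde f)^\lambda$, and setting $f:=\kappa\tilde f$ gives
\[
(J_\alpha f)^\lambda=\kappa^\lambda(J_\alpha\tilde f)^\lambda\ge C_1\kappa^{\lambda-1}f,
\]
so any $\kappa\ge C_1^{-1/(\lambda-1)}$ (legitimate because $\lambda>1$) turns this into $f\le(J_\alpha f)^\lambda$. For existence when $(\lambda,\alpha)\in A$, so $0<\lambda<1$, I would write down a concrete compactly supported sub-solution. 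A workable ansatz is
\[
f(x,t):=\kappa\,(t-T_1)_+^a\,\chi_{B_R}(x)\,\chi_{(T_1,T_2)}(t)
\]
for fixed $0<T_1<T_2$ and $R>0$, with the exponent $a$ chosen large enough that the $(t-T_1)^\alpha$ degeneracy of $J_\alpha f$ near the bottom of the support is absorbed (one finds $a(1-\lambda)\ge\alpha\lambda$ suffices). Because $1-\lambda>0$, a small $\kappa>0$ yields $f\le(J_\alpha f)^\lambda$ pointwise on the support, and the inequality is automatic off the support.

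For nonexistence when $\lambda=1$ (case $B$), the sub inequality reads $f\le J_\alpha f$. Taking spatial $L^p$-norms and invoking Young's convolution inequality together with $\|\Phi_\alpha(\cdot,s)\|_{L^1(\mathbb{R}^n)}=s^{\alpha-1}/\Gamma(\alpha)$ gives
\[
\|f(\cdot,t)\|_{L^p(\mathbb{R}^n)}\le\int_0^t\frac{(t-\tau)^{\alpha-1}}{\Gamma(\alpha)}\,\|f(\cdot,\tau)\|_{L^p(\mathbb{R}^n)}\,d\tau
\]
for a.e.\ $t>0$ (Fubini and $f\in X^p$ guarantee $\|f(\cdot,\tau)\|_{L^p(\mathbb{R}^n)}<\infty$ for a.e.\ $\tau$). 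A fractional Volterra--Gronwall argument, iterating the Abel-type kernel, then forces $\|f(\cdot,t)\|_{L^p(\mathbb{R}^n)}=0$ for a.e.\ $t>0$, contradicting nontriviality.

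For nonexistence when $(\lambda,\alpha)\in C\cup D$ and $(\lambda,\alpha)$ lies off \eqref{curve}, the self-similar exponent $\beta:=2\alpha\lambda/(\lambda-1)$ satisfies $\beta>(n+2)/p$. My strategy is to iterate $f_0:=f$, $f_{k+1}:=(J_\alpha f_k)^\lambda$: the sequence is pointwise nondecreasing, each $f_k$ remains a sub-solution, and monotone convergence identifies $f_\infty:=\sup_k f_k$ with a nonnegative extended-real-valued solution of $f_\infty=(J_\alpha f_\infty)^\lambda$ wherever finite. If $f_\infty\in X^p$, it is a nontrivial solution of the exact, hence of the approximate, $J_\alpha$ problem, contradicting Theorem~\ref{thm2.2} since $(\lambda,\alpha)\notin E$. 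The blow-up alternative is ruled out by the supercritical scaling: the sub inequality is invariant under $f_R(x,t):=R^\beta f(Rx,R^2t)$, and
\[
\|f_R\|_{L^p(\mathbb{R}^n\times(0,T))}=R^{\beta-(n+2)/p}\,\|f\|_{L^p(\mathbb{R}^n\times(0,R^2T))}
\]
grows unboundedly in $R$, which, combined with Hardy--Littlewood--Sobolev type $L^p\to L^q$ bounds for $J_\alpha$, forces $f\equiv 0$. I expect this case to be the main obstacle: reconciling the iteration with the scaling and using the precise hypothesis that $(\lambda,\alpha)$ lies strictly off \eqref{curve}---the threshold at which the required $L^p\to L^{\lambda p}$ bound for $J_\alpha$ just fails---is the delicate point.
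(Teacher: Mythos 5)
First, note that this paper does not actually prove Theorem \ref{thm2.3}: immediately after stating Theorems \ref{thm2.1}--\ref{thm2.3} it says ``We proved Theorem \ref{thm2.3} in \cite{T},'' so there is no in-paper proof to compare against line by line. Judged on its own, your proposal is sound on three of its four fronts. The existence argument in $E$ (rescale the approximate solution of Theorem \ref{thm2.2} by $\kappa\ge C_1^{-1/(\lambda-1)}$, legitimate since $\lambda>1$) is exactly the observation the paper itself records just before Theorem \ref{thm4.5}; the compactly supported ansatz for $A$ works as claimed, since $\int_{B_R}\Phi_\alpha(x-\xi,t-\tau)\,d\xi\ge c\,(t-\tau)^{\alpha-1}$ for $x\in B_R$ and $t-\tau$ bounded gives $J_\alpha f\ge c\kappa\,(t-T_1)^{a+\alpha}$ on the support, and then $a(1-\lambda)\ge\alpha\lambda$ together with small $\kappa$ closes the inequality; and the fractional Volterra--Gronwall iteration of the Abel kernel disposes of $\lambda=1$.

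The genuine gap is in the nonexistence proof for $(\lambda,\alpha)\in C\cup D$ off the curve \eqref{curve}, which is the heart of the theorem. Your dichotomy --- either $f_\infty:=\sup_k f_k$ lies in $X^p$ (contradicting Theorem \ref{thm2.2}) or it ``blows up'' and is ruled out by scaling --- is not exhaustive: $f_\infty$ can be finite a.e.\ and solve the exact equation without belonging to $X^p$, and Theorem \ref{thm2.2} excludes only solutions \emph{in} $X^p$, so neither branch of your argument applies to that intermediate case. Moreover the scaling computation does not by itself ``force $f\equiv0$'': $f$ is a fixed function, and the identity $\|f_R\|_{L^p(\mathbb{R}^n\times(0,T))}=R^{\beta-(n+2)/p}\|f\|_{L^p(\mathbb{R}^n\times(0,R^2T))}$ produces a contradiction only if you already have a universal a priori bound of the form $\|g\|_{L^p(\mathbb{R}^n\times(0,T))}\le C(n,\lambda,\alpha,p,T)$ valid for \emph{every} nonnegative sub-solution $g\in X^p$ in this parameter range, to which you could subject the whole family $\{f_R\}$. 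Establishing such a quantitative bound (equivalently, pointwise or integral a priori estimates for sub-solutions) is precisely the hard technical content of \cite{T}, and neither the monotone iteration nor the appeal to ``Hardy--Littlewood--Sobolev type bounds'' supplies it. As it stands, the supercritical case is a plan rather than a proof --- as you yourself seem to suspect.
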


We will prove Theorems \ref{thm2.1} and \ref{thm2.2} in Section
\ref{sec5}.  We proved Theorem \ref{thm2.3} in \cite{T}.

\section{Preliminary results}\label{sec4}
In this section we provide some remarks and lemmas needed for the
proofs of our results in Section \ref{sec3} dealing with solutions of the super
$J_\alpha$ problem \eqref{Jsup1}, \eqref{Jsup2} and the approximate
$J_\alpha$ problem \eqref{Jap1}, \eqref{Jap2}.

\begin{lem}\label{lem3.1}
 Suppose $\alpha,\beta\in(0,\infty),\:x\in\mathbb{R}^n$, and $0<\tau<t$.  Then
 \begin{equation}\label{L1.1}
  \int_{\mathbb{R}^n}\Phi_\alpha (x-\xi,t-\tau)\Phi_\beta (\xi,\tau)\,d\xi=\frac{(t-\tau)^{\alpha-1}\tau^{\beta-1}}{\Gamma(\alpha)\Gamma(\beta)}\Phi_1 (x,t).
 \end{equation}
\end{lem}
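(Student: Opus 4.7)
The plan is to reduce \eqref{L1.1} to the classical Chapman--Kolmogorov (semigroup) identity for the Gauss--Weierstrass kernel, modulo the fractional prefactors $t^{\alpha-1}/\Gamma(\alpha)$ and $\tau^{\beta-1}/\Gamma(\beta)$ appearing in the definition \eqref{I.9} of $\Phi_\alpha$.

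First I would substitute \eqref{I.9} into the left-hand side of \eqref{L1.1}. Since $0<\tau<t$, the indicator factors $\chi_{(0,\infty)}(t-\tau)$ and $\chi_{(0,\infty)}(\tau)$ are both equal to $1$. The quantity
\[
\frac{(t-\tau)^{\alpha-1}\tau^{\beta-1}}{\Gamma(\alpha)\Gamma(\beta)}
\]
does not depend on $\xi$, so it pulls out of the integral, leaving exactly
\[
\int_{\mathbb{R}^n}\Phi_1(x-\xi,t-\tau)\,\Phi_1(\xi,\tau)\,d\xi.
\]
Hence \eqref{L1.1} follows as soon as I establish the classical identity $\Phi_1(\cdot,t-\tau)\ast\Phi_1(\cdot,\tau)=\Phi_1(\cdot,t)$, namely the semigroup property of the heat kernel under spatial convolution.

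For a self-contained verification I would complete the square in $\xi$ inside the exponent, writing
\[
-\frac{|x-\xi|^2}{4(t-\tau)}-\frac{|\xi|^2}{4\tau}
=-\frac{|x|^2}{4t}-\frac{t}{4\tau(t-\tau)}\left|\xi-\frac{\tau}{t}x\right|^2,
\]
which one checks by expanding both sides. The remaining integral over $\xi$ is then a standard Gaussian of value $\bigl(4\pi\tau(t-\tau)/t\bigr)^{n/2}$, and combining this with the normalization factor $(4\pi(t-\tau))^{-n/2}(4\pi\tau)^{-n/2}$ produces exactly the normalization $(4\pi t)^{-n/2}$ of $\Phi_1(x,t)$. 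No genuine obstacle arises; the only step meriting a moment's attention is verifying the completing-the-square identity, which is purely algebraic.
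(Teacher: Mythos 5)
Your proposal is correct, and the reduction is sound: since $0<\tau<t$, the identity $\Phi_\alpha(x,t)=\frac{t^{\alpha-1}}{\Gamma(\alpha)}\Phi_1(x,t)$ for $t>0$ lets you pull the $\xi$-independent prefactor $\frac{(t-\tau)^{\alpha-1}\tau^{\beta-1}}{\Gamma(\alpha)\Gamma(\beta)}$ out of the integral, and the completing-the-square identity you state is algebraically correct (with $a=t-\tau$, $b=\tau$ one gets $b|x-\xi|^2+a|\xi|^2=(a+b)\bigl|\xi-\tfrac{b}{a+b}x\bigr|^2+\tfrac{ab}{a+b}|x|^2$), so the Gaussian integral $\bigl(4\pi\tau(t-\tau)/t\bigr)^{n/2}$ combines with the normalizations to yield $\Phi_1(x,t)$ exactly. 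The paper takes a genuinely different, though equally routine, route: it applies the Fourier transform in $x$ together with the convolution theorem, using the known transform $\hat{\Phi}_\alpha(\cdot,t)(y)=\frac{t^{\alpha-1}}{\Gamma(\alpha)}e^{-t|y|^2}$, so that the left side of \eqref{L1.1} transforms to $\frac{(t-\tau)^{\alpha-1}\tau^{\beta-1}}{\Gamma(\alpha)\Gamma(\beta)}e^{-t|y|^2}$, which is recognized as the transform of the right side. The paper's argument is shorter given that \eqref{L1.2} is taken as known (and it implicitly uses injectivity of the Fourier transform), while yours is fully self-contained and elementary, at the cost of the explicit square-completion computation; both establish the lemma by reducing it to the semigroup property of the Gaussian kernel, one on the frequency side and one on the physical side.
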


\begin{proof}
 Denote the left side of \eqref{L1.1} by $h(x,t,\tau)$.  Using the convolution theorem and the well-known fact that the Fourier transform with respect to $x$ of $\Phi_\alpha (x,t)$ is given by 
 \begin{equation}\label{L1.2}
  \hat{\Phi}_\alpha
  (\cdot,t)(y)=\frac{t^{\alpha-1}}{\Gamma(\alpha)}e^{-t|y|^2}
\quad\text{for }t>0\text{ and }y\in\mathbb{R}^n,
 \end{equation}
 we find for $0<\tau<t$ that
 \begin{align*}
  \hat{h}(\cdot,t,\tau)(y) & =\left(\frac{(t-\tau)^{\alpha-1}}{\Gamma(\alpha)}e^{-(t-\tau)|y|^2}\right)\left(\frac{\tau^{\beta-1}}{\Gamma(\beta)}e^{-\tau|y|^2}\right)\\
  & =\frac{(t-\tau)^{\alpha-1}\tau^{\beta-1}}{\Gamma(\alpha)\Gamma(\beta)}e^{-t|y|^2}\\
  & =\frac{(t-\tau)^{\alpha-1}\tau^{\beta-1}}{\Gamma(\alpha)\Gamma(\beta)}\hat{\Phi}_1 (\cdot,t)(y)
 \end{align*}
 which proves \eqref{L1.1}.
\end{proof}

\begin{lem}\label{lem3.2}
 Suppose $\lambda,\alpha,T\in(0,\infty),\:p\in[1,\infty]$, and 
 \[ 
f(x,t)=g(x,t+T)\raisebox{2pt}{$\chi$}_{[0,\infty)}(t)\quad\text{for
}(x,t)\in\mathbb{R}^n \times\mathbb{R}
\]
 where $g:\mathbb{R}^n \times\mathbb{R}\to[0,\infty)$ is a measurable function such that
 \begin{equation}\label{L2.1}
  \| g\|_{L^p (\mathbb{R}^n \times(T,\hat{T}))}<\infty\quad\text{for all }\hat{T}>T
 \end{equation}
 and
 \begin{equation}\label{L2.2}
  g\geq(J_\alpha g)^\lambda \quad\text{in }\mathbb{R}^n \times\mathbb{R}.
 \end{equation}
 Then $f\in X^p$ and $f$ is a solution of the super $J_\alpha$ problem
 \eqref{Jsup1}, \eqref{Jsup2}.
\end{lem}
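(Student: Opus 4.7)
The plan is to verify directly the three conditions defining a solution of the super $J_\alpha$ problem: that $f \in X^p$, that $f = 0$ in $\mathbb{R}^n \times (-\infty, 0)$, and that $f \geq (J_\alpha f)^\lambda$ in $\mathbb{R}^n \times \mathbb{R}$. The initial-condition statement is immediate from the factor $\chi_{[0,\infty)}(t)$, and the $X^p$ norm estimate is a routine change of variables. The substantive step is the comparison of $J_\alpha f$ with a time-shift of $J_\alpha g$.

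For the $X^p$ claim, fix $\hat T \in \mathbb{R}$. If $\hat T \le 0$ then $f \equiv 0$ on $\mathbb{R}^n \times \mathbb{R}_{\hat T}$, so the norm is zero. Otherwise, because $f(x,t) = 0$ for $t < 0$, I would write
\[
\|f\|_{L^p(\mathbb{R}^n \times \mathbb{R}_{\hat T})}^p = \int_{\mathbb{R}^n} \int_0^{\hat T} g(x, t + T)^p \, dt \, dx = \|g\|_{L^p(\mathbb{R}^n \times (T, \hat T + T))}^p,
\]
which is finite by \eqref{L2.1}. (Trivial modification for $p = \infty$.) Hence $f \in X^p$, and the initial-condition \eqref{Jsup2} holds by construction.

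The core step is to establish $(J_\alpha f)(x,t) \le (J_\alpha g)(x, t+T)$ pointwise for all $(x,t) \in \mathbb{R}^n \times \mathbb{R}$. If $t < 0$ then $\tau < t < 0$ in the defining integral \eqref{I.7}, and $f(\xi,\tau) = 0$ there, so $J_\alpha f (x,t) = 0$ and the conclusion holds trivially (in fact $f = 0 = (J_\alpha f)^\lambda$ at $(x,t)$). For $t \ge 0$, the support condition $\tau \ge 0$ on $f$ together with the support condition $\tau < t$ in \eqref{I.7} gives
\[
J_\alpha f(x,t) = \int_{\mathbb{R}^n}\int_0^t \Phi_\alpha(x - \xi, t - \tau) \, g(\xi, \tau + T) \, d\tau \, d\xi.
\]
Substituting $s = \tau + T$ in the inner integral and using $t - (s - T) = (t+T) - s$ yields
\[
J_\alpha f(x,t) = \int_{\mathbb{R}^n}\int_T^{t+T} \Phi_\alpha(x - \xi, (t+T) - s) \, g(\xi, s) \, ds \, d\xi \le J_\alpha g(x, t + T),
\]
since $g \ge 0$ and the $s$-integral is over a subset of $(-\infty, t+T)$.

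Finally, raising the inequality to the power $\lambda$ and invoking the hypothesis \eqref{L2.2} at the point $(x, t+T)$,
\[
(J_\alpha f(x,t))^\lambda \le (J_\alpha g(x, t+T))^\lambda \le g(x, t+T) = f(x,t),
\]
the last equality holding for $t \ge 0$ (and both sides vanishing for $t < 0$). This establishes \eqref{Jsup1} and completes the proof. There is no real obstacle here; the only thing to watch is that the support of $\Phi_\alpha$ in $t > 0$ is used implicitly to reduce the $\tau$-integral to $(0,t)$, and that one must treat $t < 0$ separately so that the shift argument is never applied where it would be meaningless.
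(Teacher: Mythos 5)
Your proof is correct and follows essentially the same route as the paper's: the same computation showing $\|f\|_{L^p(\mathbb{R}^n\times\mathbb{R}_{\hat T})}=\|g\|_{L^p(\mathbb{R}^n\times(T,\hat T+T))}$, and the same time-shift change of variables giving $J_\alpha f(x,t)\le J_\alpha g(x,t+T)$ followed by an application of \eqref{L2.2}. The only cosmetic difference is that you raise the inequality to the power $\lambda$ while the paper takes the $1/\lambda$ power of \eqref{L2.2}; these are equivalent.
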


\begin{proof}
 For $t>0$ we have
 \begin{align*}
  \| f\|_{L^p (\mathbb{R}^n \times(-\infty,t))} & =\| f\|_{L^p (\mathbb{R}^n \times(0,t))}\\
  & =\| g\|_{L^p (\mathbb{R}^n \times(T,t+T))}<\infty
 \end{align*}
 by \eqref{L2.1}.  Thus $f\in X^p$.
 
 Clearly $f$ satisfies \eqref{Jsup2}.  Since \eqref{Jsup1} clearly
 holds in $\mathbb{R}^n \times(-\infty,0]$, it remains only to prove
 \eqref{Jsup1} holds in $\mathbb{R}^n \times(0,\infty)$.
 
 For $(x,t)\in\mathbb{R}^n \times(0,\infty)$ we find from \eqref{L2.2} that
 \begin{align*}
  (J_\alpha f)(x,t) & =\int^{t}_{0}\int_{\mathbb{R}^n}
\Phi_\alpha (x-\xi,t-\tau)g(\xi,\tau +T)\,d\xi\,d\tau\\
  & =\int^{\hat{t}}_{T}\int_{\mathbb{R}^n}\Phi_\alpha (x-\xi,\hat{t}-\hat{\tau})g(\xi,\hat{\tau})\,d\xi\,d\hat{\tau}\quad\text{where }\hat{t}=t+T\text{ and }\hat{\tau}=\tau+T\\
  & \leq\int^{\hat{t}}_{-\infty}\int_{\mathbb{R}^n}\Phi_\alpha (x-\xi,\hat{t}-\hat{\tau})g(\xi,\hat{\tau})\,d\xi\,d\hat{\tau}\\
  & =(J_\alpha g)(x,\hat{t})\\
  & \leq g(x,\hat{t})^{1/ \lambda}\\
  & =g(x,t+T)^{1/ \lambda}=f(x,t)^{1/ \lambda}.
 \end{align*}
 Thus \eqref{Jsup1} holds in $\mathbb{R}^n \times(0,\infty)$.
\end{proof}

\begin{rem}\label{rem3.1}
 Note for use in Lemma \ref{lem3.2} that if
 \[
0\leq g(x,t)\leq\psi(t)\Phi_\beta (x,t)\quad\text{for
}(x,t)\in\mathbb{R}^n \times\mathbb{R}
\]
 where $\beta\in(0,\infty)$ and $\psi:\mathbb{R}\to[0,\infty)$ is a continuous function then for $0<T<\hat{T}<\infty$ we have
 $$\| g\|_{L^\infty (\mathbb{R}^n \times(T,\hat{T}))}<\infty$$
 and for $p\in[1,\infty)$
 \begin{align*}
  & \| g\|^{p}_{ L^p (\mathbb{R}^n \times(T,\hat{T}))}\leq\int^{\hat{T}}_{T}\psi(t)^p \left(\frac{t^{\beta-1-n/2}}{\Gamma(\beta)(4\pi)^{n/2}}\right)^p \left(\int_{\mathbb{R}^n}e^{\frac{-p|x|^2}{4t}}dx\right)dt\\
  & \leq(\hat{T}-T)\max_{T\leq t\leq\hat{T}}\psi(t)^p \left(\frac{t^{\beta-1-n/2}}{\Gamma(\beta)(4\pi)^{n/2}}\right)^p \int_{\mathbb{R}^n}e^{\frac{-p|x|^2}{4\hat{T}}}dx<\infty .
 \end{align*}
 Thus $g$ satisfies \eqref{L2.1}.
\end{rem}

Our proof of Lemma \ref{lem3.3} is a modification of \cite[Proof of
Theorem 18.1(i)]{QS} and in particular requires the following lemma.
See \cite[pages 101-102]{QS} for its proof.

\begin{lem}\label{lem3.4}
 Suppose $v,f\in L^1_{\text{loc}}(\mathbb{R}^n \times(0,\infty))$ are nonnegative functions such that
 \[
(\partial_t -\Delta)v\geq f\quad\text{in }\mathcal{D}^\prime
(\mathbb{R}^n \times(0,\infty)).
\]
 Let
 \[\varphi\in C^{\infty}_{0}(B_1 (0))\quad\text{and}\quad\psi\in
 C^{\infty}_{0}((-1,1))
\]
 be nonnegative functions such that 
\[
\varphi=1 \text { in }B_{1/2}(0),\quad \psi=1 \text{ in } [0,1/2), 
\quad\text{and }\varphi,\psi\leq 1.  
\]
For
 $R>1$, $\beta>2$, and $t_0 >0$ define 
 \[\varphi_R (x)=\varphi\left(\frac{x}{R}\right)^\beta \quad\text{ for
 }x\in\mathbb{R}^n
\]
 and
 \[\psi_R (t)=\psi\left(\frac{t-t_0}{R^2}\right)^\beta \quad\text{for }t\geq t_0 .\]
 Then
 $$\int^{\infty}_{t_0}\int_{\mathbb{R}^n}f\varphi_R \psi_R \,dx\,dt\leq\frac{C}{R^2}\iint_{Q_R}v(\varphi_R \psi_R )^{\frac{\beta-2}{\beta}}\,dx\,dt$$
 where $C>0$ does not depend on $R$ and
 \begin{equation}\label{L4.1}
  Q_R =(B_R(0)\times(t_0 ,t_0 +R^2 ))\backslash (B_{R/2}(0)\times(t_0 ,t_0 +R^{2}/2)).
 \end{equation}
\end{lem}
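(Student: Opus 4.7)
The plan is to apply the classical test function method (cf.\ Baras--Pierre, Mitidieri--Pohozaev), testing the distributional inequality $(\partial_t-\Delta)v\geq f$ against the cutoff $\eta(x,t):=\varphi_R(x)\psi_R(t)$. The useful combinatorial fact is that $\eta\equiv 1$ on $B_{R/2}(0)\times[t_0,t_0+R^2/2]$ and vanishes outside $B_R(0)\times[t_0,t_0+R^2]$, so $\partial_t\eta+\Delta\eta$ is automatically supported in the annular set $Q_R$ from \eqref{L4.1}.

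The first step is to justify inserting $\eta$ into the weak inequality. Since $\psi_R(t_0)=1$, the function $\eta$ extended by zero across $t=t_0$ is not in $C^\infty_c(\mathbb{R}^n\times(0,\infty))$, so I would multiply by a temporal cutoff $\chi_\varepsilon(t)\in C^\infty(\mathbb{R})$ that increases monotonically from $0$ on $(-\infty,t_0-\varepsilon]$ to $1$ on $[t_0,\infty)$. Then $\varphi_R\psi_R\chi_\varepsilon$ is an admissible test function, and the distributional inequality gives
$$\iint f\,\varphi_R\psi_R\chi_\varepsilon\,dx\,dt \;\leq\; -\iint v\,\varphi_R\psi_R\chi_\varepsilon'\,dx\,dt \;-\iint v\,\chi_\varepsilon\bigl(\varphi_R\partial_t\psi_R+\psi_R\Delta\varphi_R\bigr)\,dx\,dt.$$
Because $v,\chi_\varepsilon',\psi_R,\varphi_R\geq 0$, the first term on the right is non-positive and may be dropped. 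Sending $\varepsilon\downarrow 0$ by dominated convergence (using $v,f\in L^1_{\mathrm{loc}}(\mathbb{R}^n\times(0,\infty))$ and $t_0>0$) yields
$$\int_{t_0}^\infty\!\!\int_{\mathbb{R}^n} f\eta\,dx\,dt \;\leq\; \iint v\,|\partial_t\eta+\Delta\eta|\,dx\,dt.$$

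The second step is the pointwise derivative estimate on the test function. Using the chain rule, the hypothesis $\beta>2$ (so no negative exponents appear), and $\varphi,\psi\leq 1$ to push powers down to a common exponent, one obtains
$$|\partial_t\psi_R|\leq\frac{C}{R^2}\,\psi_R^{(\beta-1)/\beta}, \quad |\Delta\varphi_R|\leq\frac{C}{R^2}\,\varphi_R^{(\beta-2)/\beta}, \quad\text{hence}\quad |\partial_t\eta+\Delta\eta| \leq \frac{C}{R^2}(\varphi_R\psi_R)^{(\beta-2)/\beta}.$$
Since $\partial_t\eta+\Delta\eta$ vanishes wherever both $\varphi_R$ and $\psi_R$ are locally constant, its support lies in $Q_R$, and combining the previous displays delivers the claimed inequality.

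I expect the main obstacle to be the boundary manoeuvre at $t=t_0$: one must produce a $C^\infty_c$ replacement for $\eta$ that is integrated against only $L^1_{\mathrm{loc}}$ data (so no trace of $v$ at $t=t_0$ is available) and verify that the extra $\chi_\varepsilon'$ term carries the favorable sign so that it can simply be dropped. Once that approximation step is in place, the remainder is direct calculus on the explicit cutoffs $\varphi_R$ and $\psi_R$.
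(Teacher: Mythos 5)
Your proposal is correct, and it is essentially the argument the paper relies on: the paper does not prove Lemma \ref{lem3.4} itself but cites \cite[pages 101--102]{QS}, where precisely this rescaled test-function computation (testing against $\varphi_R\psi_R$, dropping the favorably signed boundary term at $t=t_0$, and using $|\Delta\varphi_R|\le CR^{-2}\varphi_R^{(\beta-2)/\beta}$, $|\partial_t\psi_R|\le CR^{-2}\psi_R^{(\beta-1)/\beta}$ with support in $Q_R$) is carried out. Your handling of the $t=t_0$ boundary via the monotone cutoff $\chi_\varepsilon$ and the sign of the $\chi_\varepsilon'$ term is the right way to make the test function admissible, and the rest is the direct calculus you describe.
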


\begin{proof}[Proof of Lemma \ref{lem3.3}]
  For $R>1,\:\gamma>2m$, and $t_0 >0$ define
 $$\varphi_R (x)=\varphi\left(\frac{x}{R}\right)^\gamma \quad\text{and}\quad\psi_R (t)=\psi\left(\frac{t-t_0}{R^2}\right)^\gamma$$
 for $x\in\mathbb{R}^n$ and $t\geq t_0$ where $\varphi$ and $\psi$ are
 as in Lemma \ref{lem3.4}.  Let $f_0 (x,t)$ be the function on the right side
 of \eqref{L3.2}.
 
 For $j=1,...,m$ we claim that
 \begin{equation}\label{L3.7}
  \int^{\infty}_{t_0}\int_{\mathbb{R}^n}f_0 \varphi_R \psi_R \,dx\,dt\leq\left(\frac{C}{R^2}\right)^j \iint_{Q_R}(H^{m-j}u)(\varphi_R \psi_R )^{\frac{\gamma-2j}{\gamma}}\,dx\,dt
 \end{equation}
 where $H=\partial_t-\Delta$, $C>0$ does not depend on $R$, and $Q_R$
 is defined in \eqref{L4.1}.
 
 Inequality \eqref{L3.7} holds for $j=1$ by \eqref{L3.2}--\eqref{L3.3}
 and Lemma \ref{lem3.4} with $v=H^{m-1}u$ and $f=f_0$.  Suppose inductively
 that \eqref{L3.7} is true for some integer $j\in[1,m-1]$.  Let
 \[\hat{\varphi}_R (x):=\varphi_R (x)^{\frac{\gamma-2j}{\gamma}}=\varphi\left(\frac{x}{R}\right)^{\gamma-2j}\quad\text{for }x\in\mathbb{R}^n\]
 and
 \[\hat{\psi}_R (t):=\psi_R (t)^{\frac{\gamma-2j}{\gamma}}=\psi\left(\frac{t-t_0}{R^2}\right)^{\gamma-2j}\quad\text{for }t\geq t_0.\]
 Then using the inductive assumption, \eqref{L3.1}, \eqref{L3.3}, and
 Lemma \ref{lem3.4} with 
\[
f=H^{m-j}u,\quad v=H^{m-j-1}u,\quad \text{and}\quad \beta=\gamma-2j,
\]
we find that
 \begin{align*}
  \int^{\infty}_{t_0}\int_{\mathbb{R}^n} & f_0 \varphi_R \psi_R \,dx\,dt\leq\left(\frac{C}{R^2}\right)^j \int^{\infty}_{t_0}\int_{\mathbb{R}^n}(H^{m-j}u)\hat{\varphi}_R \hat{\psi}_R \,dx\,dt\\
  & \leq\left(\frac{C}{R^2}\right)^j \left[\left(\frac{C}{R^2}\right)\iint_{Q_R}(H^{m-j-1}u)(\hat{\varphi}_R \hat{\psi}_R )^{\frac{\gamma-2j-2}{\gamma-2j}}\,dx\,dt\right]\\
  & =\left(\frac{C}{R^2}\right)^{j+1}\iint_{Q_R}(H^{m-(j+1)}u)(\varphi_R \psi_R )^{\frac{\gamma-2(j+1)}{\gamma}}\,dx\,dt
 \end{align*}
 which completes the inductive proof of \eqref{L3.7} for $j=1,2,...,m$.
 
 Taking $j=m$ in \eqref{L3.7}, defining 
$\gamma>2m$ and $\lambda^\prime >1$ by
 \[\frac{2m}{\gamma}=\frac{1}{\lambda^\prime}=1-\frac{1}{\lambda},\]
 and using H\"{o}lder's inequality, we have
 \begin{align*}
  \int^{\infty}_{t_0}\int_{\mathbb{R}^n} & ((t+1)^{-(m-\alpha)}u)^\lambda \varphi_R \psi_R \,dx\,dt\\
  & \leq\frac{C}{R^{2m}}\iint_{Q_R}u(\varphi_R \psi_R )^{\frac{\gamma-2m}{\gamma}=\frac{1}{\lambda}}\\
  & =\frac{C}{R^{2m}}\iint_{Q_R}(t+1)^{m-\alpha}(t+1)^{-(m-\alpha)}u(\varphi_R \psi_R )^{\frac{1}{\lambda}}\,dx\,dt\\
  & \leq\frac{C}{R^{2m}}I(R)\left(\iint_{Q_R}((t+1)^{-(m-\alpha)}u)^\lambda \varphi_R \psi_R \,dx\,dt\right)^{1/\lambda}
 \end{align*}
 where
 \begin{align*}
  I(R) & =\left(\int^{t_0 +R^2}_{t_0}\int_{|x|<R}(t+1)^{\lambda^\prime (m-\alpha)}\,dx\,dt\right)^{1/ \lambda^\prime}\\
  & \leq C(R^{n+2}R^{2\lambda^\prime (m-\alpha)})^{1/\lambda^\prime}\quad\text{by } \eqref{L3.4}\\
  & =C\: R^{\frac{n+2}{\lambda^\prime}+2(m-\alpha)}.
 \end{align*}
 Hence
 \begin{equation}\label{L3.8}
  \int^{\infty}_{t_0}\int_{\mathbb{R}^n}((t+1)^{-(m-\alpha)}u)^\lambda \varphi_R \psi_R \,dx\,dt\leq CR^{\frac{n+2}{\lambda^\prime}-2\alpha}\left(\iint_{Q_R}((t+1)^{-(m-\alpha)}u)^\lambda (\varphi_R \psi_R )\,dx\,dt\right)^{1/ \lambda}
 \end{equation}
 which implies
 \begin{equation}\label{L3.9}
  \int^{\infty}_{t_0}\int_{\mathbb{R}^n}((t+1)^{-(m-\alpha)}u)^\lambda \varphi_R \psi_R \,dx\,dt\leq CR^{(\frac{n+2}{\lambda^\prime}-2\alpha)\lambda^\prime}.
 \end{equation}
By \eqref{L3.4}, $2\alpha\ge (n+2)/\lambda^\prime$. Hence, sending
 $R$ to $\infty$ in \eqref{L3.9} we find that 
\[
\int^{\infty}_{t_0}\int_{\mathbb{R}^n}((t+1)^{-(m-\alpha)}u)^\lambda
\,dx\,dt<\infty,
\]
which implies the integral on the right side of \eqref{L3.8} tends to
zero as $R\to\infty$. Thus sending $R$ to $\infty$ in \eqref{L3.8}
yields $u=0$ in $\mathbb{R}^n \times (t_0,\infty)$. Hence, since
$t_0>0$ was arbitrary, we see that \eqref{L3.6} holds.
\end{proof}

\begin{lem}\label{new}
 Suppose $p\in[1,\infty)$ and $f\in X^p$ is a nonnegative function satisfying \eqref{Jsup2}.  Then
 \begin{equation}\label{N1}
  J_\alpha f\in L^1_{\text{loc}}(\mathbb{R}^n \times(0,\infty))\quad\text{for }\alpha>0,
 \end{equation}
 \begin{equation}\label{N2}
  HJ_\alpha f=J_{\alpha-1}f\quad\text{in }\mathcal{D}^\prime (\mathbb{R}^n \times(0,\infty))\text{ for }\alpha>1,
 \end{equation}
 and
 \begin{equation}\label{N3}
  HJ_1 f=f\quad\text{in }\mathcal{D}^\prime (\mathbb{R}^n \times(0,\infty)),
 \end{equation}
 where $H=\partial_t -\Delta$.
\end{lem}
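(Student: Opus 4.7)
The plan is to prove the three assertions in order, with (N3) following from a classical distributional computation, (N2) reducing to (N3) via the semigroup identity $\Phi_\alpha=\Phi_1\ast\Phi_{\alpha-1}$ (itself derivable from Lemma~\ref{lem3.1}), and (N1) being established by a direct Tonelli--Fubini estimate on the convolution. For (N1), I would fix a compact $K\times[a,b]\subset\mathbb{R}^n\times(0,\infty)$ and, using Tonelli on the nonnegative integrand, reduce the task to bounding
\begin{equation*}
\int_0^b\!\!\int_{\mathbb{R}^n} f(\xi,\tau)\,G(\xi,\tau)\,d\xi\,d\tau,\qquad G(\xi,\tau):=\int_{\max\{a,\tau\}}^b\!\!\int_K\Phi_\alpha(x-\xi,t-\tau)\,dx\,dt.
\end{equation*}
Choosing $R'$ with $K\subset B_{R'/2}$, I would dispatch the region $\{|\xi|\leq R'\}$ by the simple mass bound $\int_K\Phi_\alpha(\cdot,s)\,dx\leq s^{\alpha-1}/\Gamma(\alpha)$, which yields $G\leq b^\alpha/\Gamma(\alpha+1)$, and then apply H\"older in $(\xi,\tau)$ over a set of finite measure. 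On $\{|\xi|>R'\}$, the inequality $|x-\xi|\geq|\xi|/2$ for $x\in K$ lets me pull the Gaussian factor $e^{-|\xi|^2/(16s)}$ out of $\Phi_\alpha$; integrating in $s\in(0,b)$ yields $G(\xi,\tau)\leq Ce^{-c|\xi|^2/b}$, placing $G\in L^{p'}(\{|\xi|>R'\}\times(0,b))$ so that another H\"older closes the piece.

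For (N3), I would test against $\phi\in C_0^\infty(\mathbb{R}^n\times(0,\infty))$ and use Fubini, justified by (N1) together with the compact support of $\phi$, to rewrite
\begin{equation*}
\int\!\!\int J_1 f\,(-\partial_t\phi-\Delta\phi)\,dx\,dt=\int\!\!\int f(\xi,\tau)\left[\int_\tau^\infty\!\!\int_{\mathbb{R}^n}\Phi_1(x-\xi,t-\tau)(-\partial_t\phi-\Delta\phi)(x,t)\,dx\,dt\right]d\xi\,d\tau.
\end{equation*}
The inner bracket should equal $\phi(\xi,\tau)$ because $\Phi_1(\cdot-\xi,\cdot-\tau)$ is a classical fundamental solution of $H=\partial_t-\Delta$: I would integrate by parts in $x$ and $t$, use $\partial_t\Phi_1=\Delta\Phi_1$ for $t>\tau$, discard the boundary at $t=\infty$ by the compact support of $\phi$, and use $\Phi_1(\cdot,s)\to\delta_0$ as $s\downarrow 0$ to pick up $\phi(\xi,\tau)$. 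The outer integral then equals $\langle f,\phi\rangle$, yielding \eqref{N3}.

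For (N2), Lemma~\ref{lem3.1} combined with a Beta-function integration gives the semigroup identity $\Phi_1\ast\Phi_{\alpha-1}=\Phi_\alpha$ in space-time; substituting this into $J_\alpha f$ and applying Tonelli once more produces $J_\alpha f=J_1(J_{\alpha-1}f)$ as nonnegative functions on $\mathbb{R}^n\times\mathbb{R}$. Setting $g:=J_{\alpha-1}f$, which is nonnegative, locally integrable on $\mathbb{R}^n\times(0,\infty)$ by (N1) applied with $\alpha-1$, and vanishes on $(-\infty,0)$ since $\Phi_{\alpha-1}$ is supported in $\tau>0$, I would apply the argument for (N3) verbatim to $g$ (that argument used only $g\in L^1_{\text{loc}}$, nonnegative, vanishing for $t<0$), obtaining $HJ_1 g=g$, i.e.\ $HJ_\alpha f=J_{\alpha-1}f$. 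The main obstacle is (N1): in the parameter range $\alpha<(n+2)/(2p)$ relevant to the paper's theorems, $\Phi_\alpha$ fails to be in $L^{p'}_{\text{loc}}$ near the space-time origin, so no direct H\"older bound on the convolution works; the spatial splitting into near- and far-diagonal parts, with the Gaussian tail controlling the exterior, is the technical heart of the argument, and everything downstream is a routine Fubini-plus-fundamental-solution computation.
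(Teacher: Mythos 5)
Your proposal is correct and follows essentially the same route as the paper: for \eqref{N1}, Tonelli followed by H\"older against the function $(\xi,\tau)\mapsto\int\!\!\int_K\Phi_\alpha(x-\xi,t-\tau)\,dx\,dt$, whose membership in $L^{p'}$ the paper simply records as an ``easily-verified'' fact while you supply the near/far splitting that proves it; for \eqref{N3}, the fundamental-solution identity for $\Phi_1$ tested against the adjoint of $H$; and for \eqref{N2}, the semigroup factorization $\Phi_\alpha=\Phi_1*\Phi_{\alpha-1}$, which the paper implements as a direct quadruple-integral rearrangement and you implement equivalently by applying the \eqref{N3} argument to $g:=J_{\alpha-1}f$. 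The one noteworthy (minor) difference is your justification of the interchange of integrals in \eqref{N2}: reducing it by nonnegativity to $\iint(J_\alpha f)\,|H^*\phi|\le \|H^*\phi\|_{L^\infty}\int_{\operatorname{supp}\phi}J_\alpha f<\infty$, which is immediate from \eqref{N1}, is more direct than the paper's verification of \eqref{N8} via $J_1f\in L^{p'}$ and Lemma 7.2 of \cite{T}.
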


\begin{proof}
 We will need the following easily-verified and/or well-known facts:
 \begin{enumerate}
  \item[(i)] 
   \begin{equation}\label{N4}
    \iint_{\mathbb{R}^n \times\mathbb{R}}\Phi_1 (x-\xi,t-\tau)H^* \varphi(x,t)dxdt=\varphi(\xi,\tau)
   \end{equation}
   for $\varphi\in C^{\infty}_{0}(\mathbb{R}^n \times\mathbb{R})$
   where 
$H^*= \partial_t +\Delta$; and
  \item[(ii)]
   \begin{equation}\label{N5}
    \int^{b}_{0}\int_{B_b (0)}\Phi_\alpha (x-\xi,t-\tau)dxdt\in L^q (\mathbb{R}^n \times(0,b))
   \end{equation}
   for $b,\alpha\in(0,\infty)$ and $q\in[1,\infty]$.
 \end{enumerate}

 To prove \eqref{N1}, let $b,\alpha\in(0,\infty)$.  Then by \eqref{N5} and H\"{o}lder's inequality we have
 \begin{align*}
  & \int^{b}_{0}\int_{B_b (0)}J_\alpha f\,dxdt\\
  & =\int^{b}_{0}\int_{B_b (0)}\int^{b}_{0}\int_{\mathbb{R}^n}\Phi_\alpha (x-\xi,t-\tau)f(\xi,\tau)d\xi d\tau\,dxdt\\
  & =\int^{b}_{0}\int_{\mathbb{R}^n}f(\xi,\tau)\left(\int^{b}_{0}\int_{B_b (0)}\Phi_\alpha (x-\xi,t-\tau)dxdt\right)d\xi d\tau<\infty.
 \end{align*}
 Thus \eqref{N1} holds.
 
 To prove \eqref{N2}, suppose $\beta,\gamma\in(0,\infty)$, $\beta+\gamma=\alpha>1$, and $\varphi\in C^{\infty}_{0}(\mathbb{R}^n \times(0,\infty))$.  Then 
 assuming we can interchange the order of integration in the following
 calculation (we will justify this after the calculation) and using
 the fact that $\Phi_\beta*\Phi_\gamma=\Phi_{\alpha}$ (see
 \cite[Lemma 5.1]{T}), we find that
 \begin{align}
  \notag (HJ_\alpha f)\varphi&=J_\alpha f(H^* \varphi)=(\Phi_\alpha *f)(H^* \varphi)\\
  \notag &=(\Phi_\beta*\Phi_\gamma *f)(H^* \varphi)\\
  \notag &=\iint\,\iint\Phi_\gamma (\eta-\xi,\zeta-\tau)f(\xi,\tau)\iint\Phi_\beta (x-\eta,t-\zeta)H^* \varphi(x,t)\,dxdt\,d\xi d\tau\,d\eta d\zeta\\
  \label{N7} &=\iint J_\gamma f(\eta,\zeta)\left(\iint\Phi_\beta (x-\eta,t-\zeta)H^* \varphi(x,t)\,dxdt\right)d\eta d\zeta.
 \end{align}
 Taking $\beta=1$ and $\gamma=\alpha-1$ in \eqref{N7} and using
 \eqref{N4} we find that 
\[
(HJ_\alpha f)\varphi=(J_{\alpha-1}f)(\varphi). 
\] 
Hence \eqref{N2} 
 holds provided we justify the calculation \eqref{N7} by verifying
 \begin{equation}\label{N8}
  \iint(\Phi_\alpha *f)|H^* \varphi|<\infty.
 \end{equation}
 To do this, choose $b>0$ such that suppt $\varphi\subset B_b (0)\times(0,b)$ and repeat the calculation \eqref{N7} with $\beta=\alpha-1$ and $\gamma=1$, and  
 with $H^* \varphi$ replaced with $|H^* \varphi|$ to obtain
 \begin{equation}\label{N9}
  \iint(\Phi_\alpha *f)|H^* \varphi|=\int^{b}_{0}\int_{\mathbb{R}^n}J_1 f(\eta,\zeta)\left(\int^{b}_{0}\int_{B_b (0)}\Phi_{\alpha-1}(x-\eta,t-\zeta)|H^* \varphi(x,t)|dxdt\right)d\eta d\zeta.
 \end{equation}
 Choose $p^\prime \in(1,\infty)$ such that
 $0<\frac{1}{p}-\frac{1}{p^\prime}<\frac{2}{n+2}$.  Then by 
\cite[Lemma 7.2]{T}, 
$J_1 f\in L^{p^\prime}(\mathbb{R}^n \times(0,b))$.  Hence \eqref{N8} follows from \eqref{N5}, \eqref{N9}, and H\"{o}lder's inequality.
 
 To prove \eqref{N3}, let $\varphi\in C^{\infty}_{0}(\mathbb{R}^n \times(0,\infty))$.  Then using \eqref{N4} and assuming we can interchange the order of 
 integration we see that
 \begin{align*}
  & H(J_1 f)\varphi=(J_1 f)(H^* \varphi)=(\Phi_1 *f)(H^* \varphi)\\
  & =\iint\left(\iint\Phi_1 (x-\xi,t-\tau)H^* \varphi(x,t)dxdt\right)f(\xi,\tau)d\xi d\tau\\
  & =f(\varphi).
 \end{align*}
 Thus \eqref{N3} holds because interchanging the order of integration is validated by using \eqref{N5} and H\"{o}lder's inequality as in the proof of 
 \eqref{N8}. 
\end{proof}

\begin{lem}[\cite{T}, Lemma 7.4] \label{lem3.5}
 Suppose $x\in\mathbb{R}^n$ and $t,\tau\in(0,\infty)$ satisfy
 \begin{equation}\label{7.8}
  |x|^2 <t \quad\text{and}\quad  \frac{t}{4}<\tau<\frac{3t}{4}.
 \end{equation}
 Then
 $$\int_{|\xi|^2 <\tau}\Phi_1 (x-\xi,t-\tau)\,d\xi\geq C(n)>0$$
 where $\Phi_\alpha$ is defined by \eqref{I.9}.
\end{lem}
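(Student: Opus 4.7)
The plan is a direct change of variables reducing the integral to one over a fixed domain, followed by elementary lower bounds on the integrand. Substitute $\xi=\sqrt{\tau}\,\zeta$, so that $\{|\xi|^2<\tau\}$ becomes the unit ball $\{|\zeta|<1\}$ and the Jacobian produces a factor $\tau^{n/2}$. Using the explicit form of $\Phi_1$ from \eqref{I.9}, the integral becomes
\[
\frac{1}{(4\pi)^{n/2}}\left(\frac{\tau}{t-\tau}\right)^{n/2}\int_{|\zeta|<1}\exp\!\left(-\frac{|x-\sqrt{\tau}\,\zeta|^2}{4(t-\tau)}\right)d\zeta.
\]

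Next, I would estimate each factor from below using the hypotheses in \eqref{7.8}. The bounds $t/4<\tau<3t/4$ also give $t/4<t-\tau<3t/4$, so the ratio $\tau/(t-\tau)$ lies in $(1/3,3)$, and the prefactor is bounded below by a positive constant depending only on $n$. For $|\zeta|<1$, the triangle inequality together with $|x|<\sqrt{t}$ and $\sqrt{\tau}<\sqrt{3t/4}$ gives $|x-\sqrt{\tau}\,\zeta|<\sqrt{t}\,(1+\sqrt{3}/2)$, while $4(t-\tau)>t$, so the exponent is bounded above by the absolute constant $(1+\sqrt{3}/2)^2$. Hence the exponential factor is bounded below by a positive absolute constant.

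Multiplying these lower bounds by the volume $|B_1(0)|$ produces a constant $C(n)>0$ as required. There is no real obstacle here—the entire argument is a parabolic rescaling plus elementary inequalities—and the only idea needed is the substitution $\xi=\sqrt{\tau}\,\zeta$, which exploits the fact that $|x|$, $\sqrt{\tau}$, and $\sqrt{t-\tau}$ are all comparable to $\sqrt{t}$ under \eqref{7.8}.
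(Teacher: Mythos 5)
Your proof is correct: the rescaling $\xi=\sqrt{\tau}\,\zeta$ together with the observation that $|x|$, $\sqrt{\tau}$, and $\sqrt{t-\tau}$ are all comparable to $\sqrt{t}$ under \eqref{7.8} gives uniform lower bounds on both the prefactor and the exponential, and the constants you compute are right. The paper does not reproduce a proof (it cites \cite[Lemma 7.4]{T}), but your argument is the standard one for this kind of heat-kernel lower bound and there is nothing to add.
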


\section{Proof Theorems \ref{thm2.1} and \ref{thm2.2}}\label{sec5}
In this section we prove Theorems  \ref{thm2.1} and \ref{thm2.2}.

Theorem \ref{thm2.1} is a consequence of the following Theorems
\ref{thm4.1}--\ref{thm4.4} because Theorem \ref{thm4.1} (\ref{thm4.2},
\ref{thm4.3}, \ref{thm4.4}) guarantees under the assumption on
$p,\alpha$, and $\lambda$ in Theorem \ref{thm2.1} the nonexistence (existence,
nonexistence, existence) of nontrivial nonnegative solutions
$f\in X^p$ of the super $J_\alpha$ problem \eqref{Jsup1}, \eqref{Jsup2}
when $(\lambda,\alpha)\in A(B,C,D\cup E)$.

\begin{thm}\label{thm4.1}
  Suppose $f:\mathbb{R}^n \times\mathbb{R}\to[0,\infty)$ is a
  measurable solution of the super $J_\alpha$ problem \eqref{Jsup1},
  \eqref{Jsup2} where $\lambda\in(0,1)$ and $\alpha\in(0,\infty)$ are
  constants.  Then either
 \begin{equation}\label{T1.1}
  f=0\quad\text{a.e. in }\mathbb{R}^n \times\mathbb{R}
 \end{equation}
 or there exists $a\in[0,\infty)$ such that
 \begin{equation}\label{T1.2}
  f=0\quad\text{a.e. in }\mathbb{R}^n \times(-\infty,a),
 \end{equation}
 \begin{equation}\label{T1.3}
  f(x,t)\geq(M(t-a)^\alpha )^{\frac{\lambda}{1-\lambda}}\quad\text{a.e. in }\mathbb{R}^n \times(a,\infty),
 \end{equation}
 and
 \begin{equation}\label{T1.4}
  (J_\alpha f)(x,t)\geq(M(t-a)^\alpha )^{\frac{1}{1-\lambda}}\quad\text{a.e. in }\mathbb{R}^n \times(a,\infty)
 \end{equation}
 where
 \begin{equation}\label{T1.5}
  M=M(\lambda,\alpha)=\frac{\Gamma\left(\frac{\lambda\alpha}{1-\lambda}+1\right)}{\Gamma\left(\alpha+\frac{\lambda\alpha}{1-\lambda}+1\right)}
 \end{equation}
 where $\Gamma$ is the Gamma function.
\end{thm}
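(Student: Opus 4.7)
The plan is to argue by dichotomy. Define
$$a := \sup\{b \in \mathbb{R} : f = 0 \text{ a.e.\ on } \mathbb{R}^n \times (-\infty,b)\},$$
which lies in $[0,\infty]$ by \eqref{Jsup2}. If $a = +\infty$ we are in case \eqref{T1.1} and there is nothing more to prove; otherwise $a \in [0,\infty)$, \eqref{T1.2} is immediate by construction, and the task reduces to establishing the quantitative bounds \eqref{T1.3} and \eqref{T1.4}.

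The engine of the proof is a one-dimensional iteration. Suppose we have already established $f(x,t) \geq c(t-a)_+^s$ a.e., with the right-hand side independent of $x$. Then Fubini together with the identity $\int_{\mathbb{R}^n}\Phi_\alpha(x-\xi,t-\tau)\,d\xi = (t-\tau)^{\alpha-1}/\Gamma(\alpha)$ and the Beta-function integral
$$\int_a^t (t-\tau)^{\alpha-1}(\tau-a)^s\,d\tau = \frac{\Gamma(\alpha)\Gamma(s+1)}{\Gamma(\alpha+s+1)}(t-a)^{\alpha+s}$$
give $J_\alpha f(x,t) \geq c\,\Gamma(s+1)/\Gamma(\alpha+s+1)\cdot(t-a)_+^{\alpha+s}$, after which \eqref{Jsup1} upgrades this to $f(x,t) \geq c_1(t-a)_+^{s_1}$ with $s_1 = \lambda(\alpha+s)$ and $c_1 = c^\lambda\,(\Gamma(s+1)/\Gamma(\alpha+s+1))^\lambda$. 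Since $0<\lambda<1$, the map $s \mapsto \lambda(\alpha+s)$ is a contraction on $[0,\infty)$ with unique fixed point $s^* = \lambda\alpha/(1-\lambda)$, and the corresponding fixed point of the $c$-recursion is $c^* = M^{\lambda/(1-\lambda)}$, with $M$ as in \eqref{T1.5}. Starting from any initial pair $(c_0,s_0)$ with $c_0>0$, one checks that $s_n\to s^*$ and $c_n\to c^*$, and passing to the pointwise limit in $(x,t)$ yields \eqref{T1.3}. Finally, applying $J_\alpha$ to \eqref{T1.3} and reusing the same Beta-function identity produces \eqref{T1.4}.

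The main obstacle is the base case: exhibiting any initial $x$-uniform bound $f(x,t) \geq c_0(t-a)_+^{s_0}$ with $c_0>0$. From the definition of $a$, for every $\epsilon>0$ there is a set $E_\epsilon \subset \mathbb{R}^n\times(a,a+\epsilon)$ of positive Lebesgue measure on which $f\geq c>0$; the strict positivity of $\Phi_\alpha$ on $\mathbb{R}^n\times(0,\infty)$ then forces $J_\alpha f>0$ pointwise on $\mathbb{R}^n\times(a,\infty)$ and hence $f>0$ a.e.\ there. Upgrading this qualitative positivity to a quantitative $x$-uniform lower bound is the delicate step. My plan is to first derive a bound valid only on a parabolic region $|x|^2 < t-a$, using an $\alpha$-version of Lemma \ref{lem3.5} obtained from that lemma together with the convolution identity of Lemma \ref{lem3.1}, and then to perform one or more rounds of the substitution $f \mapsto (J_\alpha f)^\lambda$: each round convolves the current bound against the Gaussian kernel $\Phi_\alpha$, thereby widening its effective $x$-support, while the subsequent $\lambda$-th power (with $0<\lambda<1$) widens it further. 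A careful bookkeeping should show that after finitely many such rounds the bound can be taken uniformly on all of $\mathbb{R}^n$ at the price of reducing the exponent $s$, thereby producing the required starting pair $(c_0,s_0)$. Once this is in place the iteration above runs automatically to the sharp constants $c^*$ and $s^*$, delivering \eqref{T1.3} and then \eqref{T1.4}.
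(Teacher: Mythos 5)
Your overall architecture (the dichotomy via $a$, the Beta-function iteration with fixed point $s^*=\lambda\alpha/(1-\lambda)$ and limiting constant $M^{\lambda/(1-\lambda)}$, and the final application of $J_\alpha$ to get \eqref{T1.4}) matches the paper, and the "engine" computation is correct. But there is a genuine gap exactly where you flag the delicate step, and your proposed repair does not work. You plan to start from a lower bound supported in a single parabolic region $|x|^2<t-a$ and then widen its effective $x$-support by finitely many rounds of $f\mapsto(J_\alpha f)^\lambda$. Each convolution of a bound supported in $\{|\xi|^2<\tau-a\}$ against the Gaussian kernel produces a lower bound that decays like $e^{-|x|^2/(4(t-a))}$ for large $|x|$, and raising to the power $\lambda$ only slows this to $e^{-\lambda|x|^2/(4(t-a))}$; after $k$ rounds you still have decay of order $e^{-\lambda^k|x|^2/(4(t-a))}$. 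No finite number of rounds yields a bound uniform in $x$ on all of $\mathbb{R}^n$, so the claimed base pair $(c_0,s_0)$ with an $x$-independent $c_0$ is never produced. Pushing $k\to\infty$ would require controlling an infinite product of degrading constants, which is a harder bookkeeping problem than the one you set out to avoid.

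The paper's resolution is different and is the idea you are missing: it never seeks a global-in-$x$ starting bound. It runs the contraction iteration separately on each translated parabolic region $\Omega(x_0,a,T)=\{(x,t):|x-x_0|^2<t-a<T\}$, where a crude starting constant $b_0>0$ (depending on $x_0$ and $T$) is immediate from the strict positivity of $J_\alpha f$, and where the spatial integral of the kernel is controlled from below by Lemma \ref{lem3.5}. Because the recursion $b_j/N_0=(b_{j-1}/N_0)^\lambda$ forgets its initial data, the limiting constant $N_0$ depends only on $n,\lambda,\alpha$ --- not on $x_0$ or $T$ --- and since
\[
\mathbb{R}^n\times(a,\infty)=\bigcup_{x_0\in\mathbb{R}^n,\;T>0}\Omega(x_0,a,T),
\]
the bound \eqref{T1.8} holds on all of $\mathbb{R}^n\times(a,\infty)$ with the single constant $N_0$. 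Only then does the paper run your ``engine'' iteration globally (a second contraction, $N_j/M=(N_{j-1}/M)^\lambda$) to upgrade $N_0$ to the sharp constant $M$ and obtain \eqref{T1.3} and \eqref{T1.4}. In short: the $x$-uniformity comes from translation invariance plus the initial-data-forgetting property of the fixed-point iteration, not from spreading the support of the bound. You already observed the forgetting property; you need to exploit it locally on the translated regions rather than trying to manufacture a global starting bound first.
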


\begin{proof}
 Let
 \begin{equation}\label{T1.6}
  a=\sup\{ t\in\mathbb{R}:\| f\|_{L^\infty (\mathbb{R}^n \times(-\infty,t))}=0\}.
 \end{equation}
 Then \eqref{Jsup2} implies $a\geq0$.  If $a=\infty$ then \eqref{T1.1} holds.  Hence we can assume
 \begin{equation}\label{T1.7}
  a\in[0,\infty).
 \end{equation}
 It follows from \eqref{T1.6} and \eqref{T1.7} that \eqref{T1.2} holds and it remains only to prove \eqref{T1.3} and \eqref{T1.4}.  To do this we first 
 prove
 \begin{equation}\label{T1.8}
  f(x,t)\geq(N_0 (t-a)^\alpha )^{\frac{\lambda}{1-\lambda}}\quad\text{a.e. in }\mathbb{R}^n \times(a,\infty)
 \end{equation}
 for some positive constant $N_0 =N_0 (n,\lambda,\alpha)$.
 Let $T>0$ and $x_0 \in\mathbb{R}^n$ be fixed.  To prove \eqref{T1.8} it suffices to prove
 \begin{equation}\label{T1.9}
  f(x,t)\geq(N_0 (t-a)^\alpha )^{\frac{\lambda}{1-\lambda}}\quad\text{for }(x,t)\in\Omega(x_0 ,a, T),
 \end{equation}
 and some positive constant $N_0 =N_0 (n,\lambda,\alpha)$ where
 $$\Omega(x_0 ,t_0 ,T):=\{(x,t)\in\mathbb{R}^n \times\mathbb{R}:|x-x_0 |^2<(t-t_0 )<T\}$$
 because
 $$\mathbb{R}^n \times(a,\infty)=\bigcup_{\substack{x_0 \in\mathbb{R}^n \\
                                                 T>0}}\Omega(x_0 ,a,T).$$
 Let
 \begin{equation}\label{T1.10}
  t_0 \in(a,T+a)
 \end{equation}
 be fixed.  Then to prove \eqref{T1.9}, and hence \eqref{T1.8}, it suffices to prove
 \begin{equation}\label{T1.11}
  f(x,t)\geq(N_0 (t-t_0 )^\alpha )^{\frac{\lambda}{1-\lambda}}\quad\text{for }(x,t)\in\Omega(x_0 ,t_0 ,T)
 \end{equation}
 and some positive constant $N_0=N_0(n,\lambda,\alpha)$
because then sending $t_0$ to $a$ in \eqref{T1.11} we get \eqref{T1.9}.  Define
 \[
g:\mathbb{R}^n\times\mathbb{R}\to[0,\infty)
\]
by
\begin{equation}\label{T1.12}
  g(x,t)=f(x+x_0 ,t+t_0 ).
 \end{equation}
 Then $f$ satsifies \eqref{T1.11},  and hence \eqref{T1.8}, if and only if $g$ satsifies
 \begin{equation}\label{T1.13}
  g(x,t)\geq(N_0 t^\alpha )^{\frac{\lambda}{1-\lambda}}\quad\text{for }(x,t)\in\Omega(0,0,T).
 \end{equation}
 
 It follows from \eqref{T1.6} and \eqref{T1.10} that $J_\alpha f$ is bounded below by a positive constant on bounded subsets of $\mathbb{R}^n \times(t_0 ,\infty)$, in particular on $\Omega(x_0 ,t_0 ,T)$.  Hence by \eqref{Jsup1} and \eqref{T1.12} we see that $g$ is bounded below by a positive constant on $\Omega(0,0,T)$.  Thus there exists a constant $b_0 >0$ such that
 \begin{equation}\label{T1.14}
  g(x,t)\geq(b_0 T^\alpha )^{\frac{\lambda}{1-\lambda}}\geq(b_0 t^\alpha )^{\frac{\lambda}{1-\lambda}}\quad\text{for }(x,t)\in\Omega(0,0,T).
 \end{equation}
 (Note however that $b_0$ may depend not only on $n,\lambda$, and $\alpha$ but also on $x_0 ,t_0$, and $T$.)
 
 Also for $(x,t)\in\mathbb{R}^n \times\mathbb{R}$ we find from \eqref{T1.12} and \eqref{Jsup1} that
 \begin{align*}
  g(x,t)^{1/ \lambda} & =f(x+x_0 ,t+t_0 )^{1/ \lambda}\geq(J_\alpha f)(x+x_0 ,t+t_0 )\\
  & =\int^{t+t_0}_{-\infty}\int_{\mathbb{R}^n}\Phi_\alpha (x+x_0 -\xi,t+t_0 -\tau)f(\xi,\tau)\,d\xi\,d\tau\\
  & =\int^{t}_{-\infty}\int_{\mathbb{R}^n}\Phi_\alpha (x-\bar{\xi},t-\bar{\tau})f(\bar{\xi}+x_0 ,\bar{\tau}+t_0 )\,d\bar{\xi}\,d\bar{\tau}\\
  & =J_\alpha g(x,t).
 \end{align*}
 Thus
 \begin{equation}\label{T1.15}
  g\geq(J_\alpha g)^\lambda \quad\text{in }\mathbb{R}^n \times\mathbb{R}.
 \end{equation}
 Let $\beta:=\frac{\lambda\alpha}{1-\lambda}$.  Then for $(x,t)\in\Omega(0,0,T)$ we obtain from \eqref{T1.15}, \eqref{T1.14}, and Lemma \ref{lem3.5} that
 \begin{align*}
  & g(x,t)^{1/ \lambda}\geq J_\alpha g(x,t)\geq b^{\beta/ \alpha}_{0}\iint_{\Omega(0,0,T)}\Phi_\alpha (x-\xi,t-\tau)\tau^\beta \,d\xi\,d\tau\\
  & \geq b^{\beta/ \alpha}_{0}\int^{3t/4}_{t/4}\frac{(t-\tau)^{\alpha-1}}{\Gamma(\alpha)}\tau^\beta \left(\int_{|\xi|<\sqrt{\tau}}\Phi_1 (x-\xi,t-\tau)d\xi\right)d\tau\\
  & \geq b^{\beta/ \alpha}_{0}C(n)\int^{3t/4}_{t/4}\frac{(t-\tau)^{\alpha-1}}{\Gamma(\alpha)}\tau^\beta d\tau\\
  & =b^{\beta/ \alpha}_{0}\frac{C(n)}{\Gamma(\alpha)}\left(\int^{3/4}_{1/4}(1-s)^{\alpha-1}s^\beta ds\right)t^{\alpha+\beta=\frac{\alpha}{1-\lambda}}.
 \end{align*} 
 Thus letting
 $$N_0 =N_0 (n,\lambda,\alpha)=\frac{C(n)}{\Gamma(\alpha)}\int^{3/4}_{1/4}(1-s)^{\alpha-1}s^{\frac{\lambda\alpha}{1-\lambda}}ds$$
 and $b_1 =b^{\lambda}_{0}N^{1-\lambda}_{0}$ we have
 \begin{equation}\label{T1.16}
  g(x,t)\geq(b_1t^\alpha )^{\frac{\lambda}{1-\lambda}}\quad\text{for }(x,t)\in\Omega(0,0,t)
 \end{equation}
 where
 $$\frac{b_1}{N_0}=\left(\frac{b_0}{N_0}\right)^\lambda .$$
 
 Iterating the method we used to derive \eqref{T1.16} from \eqref{T1.14} we inductively obtain a sequence $\{b_j \}^{\infty}_{j=0}\subset(0,\infty)$ such
 that for $j=1,2,...$ we have
 \begin{equation}\label{T1.17}
  \frac{b_j}{N_0}=\left(\frac{b_{j-1}}{N_0}\right)^\lambda
 \end{equation}
 and
 \begin{equation}\label{T1.18}
  g(x,t)\geq(b_j t^\alpha)^{\frac{\lambda}{1-\lambda}}\quad\text{for }(x,t)\in\Omega(0,0,T).
 \end{equation}
 Since $\lambda\in(0,1)$ it follows from \eqref{T1.17} that
 $$\lim_{j\to\infty}\frac{b_j}{N_0}=1.$$
 Consequently, sending $j$ to $\infty$ in \eqref{T1.18} we obtain \eqref{T1.13} and hence also \eqref{T1.8}.

 Using \eqref{Jsup1}, \eqref{T1.2}, \eqref{T1.8}, and \eqref{T1.5} and making the change of variables $\bar{t}=t-a,\:\bar{\tau}=\tau-a$ we obtain for $(x,t)\in\mathbb{R}^n \times(a,\infty)$  that
 \begin{align*}
  f(x,t)^{1/ \lambda}&\geq J_\alpha f(x,t)\\
  &\geq N^{\frac{\lambda}{1-\lambda}}_{0}\int^{t}_{a}\frac{(t-\tau)^{\alpha-1}}{\Gamma(\alpha)}(\tau-a)^{\frac{\lambda\alpha}{1-\lambda}}d\tau\\
  &=N^{\frac{\lambda}{1-\lambda}}_{0}\int^{\bar{t}}_{0}\frac{(\bar{t}-\bar{\tau})^{\alpha-1}}{\Gamma(\alpha)}\bar{\tau}^{\frac{\lambda\alpha}{1-\lambda}}d\bar{\tau}\\
  &=N^{\frac{\lambda}{1-\lambda}}_{0}M\bar{t}^{\frac{\alpha}{1-\lambda}}
  =(N^{\lambda}_{0}M^{1-\lambda}(t-a)^\alpha )^{\frac{1}{1-\lambda}}.
 \end{align*}
 Thus for $(x,t)\in\mathbb{R}^n \times(a,\infty)$ we have
 \begin{equation}\label{T1.19}
  f(x,t)\geq(J_\alpha f(x,t))^\lambda \geq(N_1 (t-a)^\alpha )^{\frac{\lambda}{1-\lambda}}
 \end{equation}
 where
 $$\frac{N_1}{M}=\left(\frac{N_0}{M}\right)^\lambda .$$
 
 Iterating the method we used to derive \eqref{T1.19} from \eqref{T1.8} we inductively obtain a sequence $\{N_j \}^{\infty}_{j=0}\subset(0,\infty)$ such 
 that for $j=1,2,...$ we have
 \begin{equation}\label{T1.20}
  \frac{N_j}{M}=\left(\frac{N_{j-1}}{M}\right)^\lambda
 \end{equation}
 and
 \begin{equation}\label{T1.21}
  f(x,t)\geq(J_\alpha f(x,t))^\lambda \geq(N_j (t-a)^\alpha
  )^{\frac{\lambda}{1-\lambda}}
\quad\text{for }(x,t)\in\mathbb{R}^n \times(a,\infty).
 \end{equation}
 Since $\lambda\in(0,1)$ it follows from \eqref{T1.20} that
 $$\lim_{j\to\infty}\frac{N_j}{M}=1.$$
 Consequently, sending $j$ to $\infty$ in \eqref{T1.21} we obtain \eqref{T1.3} and \eqref{T1.4}. 
\end{proof}

\begin{thm}\label{thm4.2}
 Suppose $\alpha,T\in(0,\infty)$ and $p\in[1,\infty]$.  Then there exists $a=a(\alpha)>0$ such that a solution
 $$f\in C^\infty (\mathbb{R}^n \times[0,\infty))\cap X^p$$
 of the super $J_\alpha$ problem
 \begin{align*}
  f\geq J_\alpha f  \quad&\text{in }\mathbb{R}^n \times\mathbb{R}\\
  f=0  \qquad&\text{in }\mathbb{R}^n \times(-\infty,0)
 \end{align*}
 is
 \begin{equation}\label{T2.1}
  f(x,t)=e^{a(t+T)}\Phi_1 (x, t+T)\raisebox{2pt}{$\chi$}_{[0,\infty)}(t).
 \end{equation}
\end{thm}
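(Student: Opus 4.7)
The plan is to apply Lemma \ref{lem3.2} with $\lambda=1$ to the auxiliary function
$$g(x,t):=e^{at}\Phi_1(x,t),$$
since then $f(x,t)=g(x,t+T)\raisebox{2pt}{$\chi$}_{[0,\infty)}(t)$ has exactly the form stipulated by that lemma. The regularity $f\in C^\infty(\mathbb{R}^n\times[0,\infty))$ is immediate because $t+T\geq T>0$ makes $\Phi_1(x,t+T)$ smooth in $(x,t)$ on the closed half-space, and hypothesis (L2.1) is supplied directly by Remark \ref{rem3.1} with $\psi(t)=e^{at}$ and $\beta=1$. So the one substantive point is to verify (L2.2), namely the pointwise bound $g\geq J_\alpha g$ on $\mathbb{R}^n\times\mathbb{R}$.

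For $t\leq 0$ both sides vanish, so I focus on $t>0$. Applying Lemma \ref{lem3.1} with $\beta=1$ (so $\tau^{\beta-1}=1$ and $\Gamma(\beta)=1$) under the integral sign yields
$$J_\alpha g(x,t)=\int_0^t e^{a\tau}\int_{\mathbb{R}^n}\Phi_\alpha(x-\xi,t-\tau)\Phi_1(\xi,\tau)\,d\xi\,d\tau=\Phi_1(x,t)\int_0^t \frac{(t-\tau)^{\alpha-1}}{\Gamma(\alpha)}e^{a\tau}\,d\tau.$$
Substituting $s=t-\tau$ and pulling out $e^{at}$ rewrites the remaining integral as $e^{at}\int_0^t s^{\alpha-1}e^{-as}\,ds/\Gamma(\alpha)$, so the inequality $g\geq J_\alpha g$ reduces to
$$\frac{1}{\Gamma(\alpha)}\int_0^t s^{\alpha-1}e^{-as}\,ds\leq 1\quad\text{for all } t>0.$$

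The left-hand side is increasing in $t$ with limiting value $a^{-\alpha}$ (the standard Gamma integral), so the bound is equivalent to $a^\alpha\geq 1$. Choosing $a=a(\alpha):=1$ (or any $a\geq 1$) therefore suffices, and Lemma \ref{lem3.2} then delivers the claim. There is no real obstacle here; the only bit requiring care is the bookkeeping of the $T$-shift when passing between $g$ and $f$, and this is precisely what Lemma \ref{lem3.2} packages cleanly.
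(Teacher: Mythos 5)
Your proposal is correct and follows essentially the same route as the paper: reduce to Lemma \ref{lem3.2} with $g(x,t)=e^{at}\Phi_1(x,t)$, verify \eqref{L2.1} via Remark \ref{rem3.1}, and use Lemma \ref{lem3.1} to show $J_\alpha g=\Phi_1(x,t)\int_0^t\frac{(t-\tau)^{\alpha-1}}{\Gamma(\alpha)}e^{a\tau}\,d\tau\le g$ for $a$ large. The only (harmless) difference is that you evaluate the Gamma integral to get the explicit threshold $a\ge 1$, whereas the paper merely notes the bound $a^{-\alpha}\Gamma(\alpha)^{-1}\int_0^\infty\zeta^{\alpha-1}e^{-\zeta}\,d\zeta\to 0$ as $a\to\infty$.
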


\begin{proof}
  For all $a>0$ the function $f$ given by \eqref{T2.1} is in
  $C^\infty (\mathbb{R}^n \times[0,\infty))$.  Thus, to prove Theorem
  \ref{thm4.2}, it suffices by Lemma \ref{lem3.2}, to show there exists $a=a(\alpha)>0$
  such that the function
  $g:\mathbb{R}^n \times\mathbb{R}\to[0,\infty)$ defined by
 \begin{equation}\label{T2.2}
  g(x,t)=e^{at}\Phi_1 (x,t)
 \end{equation}
 satisfies \eqref{L2.1} and
 \begin{equation}\label{T2.3}
  g\geq J_\alpha g\quad\text{in }\mathbb{R}^n \times\mathbb{R}.
 \end{equation}
 
 By Remark \ref{rem3.1}, $g$ satisfies \eqref{L2.1} for all $a>0$.  Hence it remains only to show there exists $a=a(\alpha)>0$ such that $g$ satisfies \eqref{T2.3} 
 
 The inequality \eqref{T2.3} holds in $\mathbb{R}^n \times(-\infty,0]$ because $g=0$ there.  On the other hand, for $(x,t)\in\mathbb{R}^n \times(0,\infty)$, it follows from \eqref{T2.2} and Lemma \ref{lem3.1} that
 \begin{align}\label{T2.4}
  \notag J_\alpha g(x,t) & =\int^{t}_{0}\left(\int_{\mathbb{R}^n}\Phi_\alpha (x-\xi,t-\tau)\Phi_1 (\xi,\tau)d\xi\right)e^{a\tau}d\tau\\
  & =\Phi_1 (x,t)\int^{t}_{0}\frac{(t-\tau)^{\alpha-1}}{\Gamma(\alpha)}e^{a\tau}d\tau.
 \end{align}
 However, for $t,a>0$ we have
 \begin{align*}
  e^{-at}\int^{t}_{0} & (t-\tau)^{\alpha-1}e^{a\tau}d\tau\leq\int^{t}_{-\infty}(t-\tau)^{\alpha-1}e^{-a(t-\tau)}d\tau\\
  &
    =\frac{1}{a^\alpha}\int^{\infty}_{0}\zeta^{\alpha-1}e^{-\zeta}d\zeta\to0
\quad\text{as }a\to\infty.
 \end{align*}
 It follows therefore from \eqref{T2.4} and \eqref{T2.2} that there exists $a=a(\alpha)>0$ such that $g$ satisfies \eqref{T2.3} in $\mathbb{R}^n \times(0,\infty)$.
\end{proof}

\begin{thm}\label{thm4.3}
 Suppose
 \begin{equation}\label{T3.1}
  f\in X^p
 \end{equation}
 is a nonnegative solution of the super $J_\alpha$ problem
 \eqref{Jsup1}, \eqref{Jsup2} where $p\in[1,\infty)$ and
 \begin{equation}\label{T3.2}
  (\lambda,\alpha)\in C
 \end{equation}
 where $C$ is the region defined in Section \ref{sec2} and 
 graphed in Figure \ref{fig1}.  Then
 \begin{equation}\label{T3.3}
  f=J_\alpha f=0\quad\text{a.e. in }\mathbb{R}^n \times\mathbb{R}.
 \end{equation}
\end{thm}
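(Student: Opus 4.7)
The plan is to reduce Theorem~\ref{thm4.3} to the integer-order nonexistence result Lemma~\ref{lem3.3}. Fix an integer $m$ with $m \geq \alpha$, chosen sufficiently large below, and set $u := J_m f$. Iterating Lemma~\ref{new} (applying \eqref{N2} a total of $m-1$ times and then \eqref{N3} once) gives
\[
H^j u = J_{m-j} f \quad \text{for } j = 1, \ldots, m
\]
in $\mathcal{D}'(\mathbb{R}^n \times (0, \infty))$, where $H := \partial_t - \Delta$ and $J_0 f := f$. In particular $H^m u = f$, and for $1 \leq j \leq m - 1$ the function $H^j u = J_{m-j} f$ is nonnegative and belongs to $L^1_{\text{loc}}(\mathbb{R}^n \times (0, \infty))$ by \eqref{N1}, which gives \eqref{L3.1} and \eqref{L3.3}.

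The crucial step is a pointwise lower bound for $J_\alpha f$ in terms of $u$. Using the identity
\[
\Phi_\alpha(x, s) = \frac{\Gamma(m)}{\Gamma(\alpha)} s^{\alpha - m} \Phi_m(x, s) \qquad (s > 0)
\]
from \eqref{I.9} and the inequality $(t - \tau)^{\alpha - m} \geq t^{\alpha - m}$ valid for $0 \leq \tau \leq t$ (since $m \geq \alpha$), substitution into \eqref{I.7} yields
\[
J_\alpha f(x, t) \geq \frac{\Gamma(m)}{\Gamma(\alpha)} t^{-(m - \alpha)} u(x, t) \geq \frac{\Gamma(m)}{\Gamma(\alpha)} (t + 1)^{-(m - \alpha)} u(x, t)
\]
for $(x, t) \in \mathbb{R}^n \times (0, \infty)$. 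Combining with \eqref{Jsup1},
\[
H^m u = f \geq (J_\alpha f)^\lambda \geq \bigl(K (t + 1)^{-(m - \alpha)} u\bigr)^\lambda, \qquad K := \Gamma(m)/\Gamma(\alpha) > 0,
\]
which is exactly \eqref{L3.2}. Together with $(\lambda, \alpha) \in C$ and $\alpha \leq m$, the assumption \eqref{L3.4} of Lemma~\ref{lem3.3} is verified.

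Provided $u \in L^\lambda_{\text{loc}}(\mathbb{R}^n \times (0, \infty))$, Lemma~\ref{lem3.3} then yields $u = 0$ in $\mathcal{D}'(\mathbb{R}^n \times (0, \infty))$, and hence $u = 0$ a.e.\ there. Since $u = 0$ on $\mathbb{R}^n \times (-\infty, 0]$ (immediate from \eqref{Jsup2}), we conclude $u \equiv 0$ a.e.\ on $\mathbb{R}^n \times \mathbb{R}$. Then $f = H^m u = 0$ distributionally, hence a.e., and $J_\alpha f = 0$ a.e.\ follows.

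The main technical obstacle is ensuring $u = J_m f \in L^\lambda_{\text{loc}}$, and this is where the freedom in choosing $m$ is used. Since the time singularity of $\Phi_m$ is only $t^{m - 1 - n/2}$, a H\"older estimate in the spirit of Remark~\ref{rem3.1}, pairing $f \in X^p$ against $\Phi_m \in L^{p'}_{\text{loc}}$, shows that for $m$ sufficiently large (depending on $n$ and $p$), the function $J_m f$ is locally bounded on $\mathbb{R}^n \times (0, \infty)$, which is amply enough.
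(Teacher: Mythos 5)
Your proposal is correct and follows essentially the same route as the paper: set $u=J_m f$, use Lemma \ref{new} to get the distributional identities $H^ju=J_{m-j}f$, bound $J_\alpha f\geq \frac{\Gamma(m)}{\Gamma(\alpha)}(t+1)^{-(m-\alpha)}J_m f$ via the kernel comparison, and invoke Lemma \ref{lem3.3}. The only cosmetic difference is that the paper takes the minimal integer $m$ with $\alpha\le m<\alpha+1$ rather than a large $m$; the $L^\lambda_{\mathrm{loc}}$ hypothesis you worry about is in any case immediate, since the derived inequality gives $\bigl(K(t+1)^{-(m-\alpha)}u\bigr)^\lambda\le f\in L^1_{\mathrm{loc}}$.
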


\begin{proof}
By \eqref{Jsup1} and \eqref{Jsup2}, $f$ satisfies \eqref{T3.3} a.e. in 
$\mathbb{R}^n \times(-\infty,0]$. Hence it suffices to prove 
\begin{equation}\label{neweq}
f=J_\alpha f=0\quad \text{a.e. in }\mathbb{R}^n \times(0,\infty).
\end{equation}
Let $u=J_m f$ where $m$ is the positive integer satisfying $\alpha\le
m<\alpha+1$. By Lemma \ref{new}, $u$ satisfies \eqref{L3.1} and
\eqref{L3.3} and 
 \begin{align*}
  & H^m u (x,t)=f(x,t)\geq(J_\alpha f(x,t))^\lambda \\
  & =\left(\int^{t}_{0}\int_{\mathbb{R}^n}\frac{(t-\tau)^{(\alpha-m)+(m-1)}}{\Gamma(\alpha)}\Phi_1 (x-\xi,t-\tau)f(\xi,\tau)\,d\xi\,d\tau\right)^\lambda \\
  & \geq\left(\frac{\Gamma(m)}{\Gamma(\alpha)}(t+1)^{\alpha-m}J_m f(x,t)\right)^\lambda =\left(\frac{\Gamma(m)}{\Gamma(\alpha)}(t+1)^{\alpha-m}u(x,t)\right)^\lambda .
 \end{align*}
 Hence \eqref{neweq} follows from \eqref{T3.2} and Lemma \ref{lem3.3}.
\end{proof}

\begin{thm}\label{thm4.4}
 Suppose
 \begin{equation}\label{T4.1}
  \lambda>1,\quad 0<\alpha<\frac{n+2}{2}\left(1-\frac{1}{\lambda}\right),\quad p\in[1,\infty)\quad\text{and }T>0.
 \end{equation}
 Then
 \begin{equation}\label{T4.2}
  \beta:=\frac{n+2}{2}-\frac{\lambda\alpha}{\lambda-1}=\frac{n+2}{2}-\frac{\alpha}{1-\frac{1}{\lambda}}>0
 \end{equation}
 and a solution
 $$f\in C^\infty (\mathbb{R}^n \times[0,\infty))\cap X^p$$
 of the super $J_\alpha$ problem \eqref{Jsup1}, \eqref{Jsup2} is
 \begin{equation}\label{T4.3}
  f(x,t)=A\Phi_\beta (x,t+T)\raisebox{2pt}{$\chi$}_{[0,\infty)}(t)
 \end{equation}
 where
 \begin{equation}\label{T4.4}
  A=A(n,\lambda,\alpha)=\left(\frac{(4\pi)^{(\lambda-1)n/2}\Gamma(\alpha+\beta)^\lambda}{\Gamma(\beta)}\right)^{\frac{1}{\lambda-1}}.
 \end{equation}
\end{thm}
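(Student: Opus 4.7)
The plan is to apply Lemma \ref{lem3.2} to the auxiliary function $g(x,t):=A\Phi_\beta(x,t)$, so that the function $f$ in \eqref{T4.3} has the form required by that lemma. First, the inequality $\alpha<\frac{n+2}{2}(1-1/\lambda)$ in \eqref{T4.1} is equivalent to $\frac{\lambda\alpha}{\lambda-1}<\frac{n+2}{2}$, which is \eqref{T4.2}. The local $L^p$ bound \eqref{L2.1} for $g$ is immediate from Remark \ref{rem3.1} applied with $\psi\equiv A$. It therefore remains only to verify the pointwise inequality \eqref{L2.2}, namely $g\geq (J_\alpha g)^\lambda$ in $\mathbb{R}^n\times\mathbb{R}$ (the inequality being trivial on $\mathbb{R}^n\times(-\infty,0]$).

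Next, I would compute $J_\alpha g$ explicitly for $(x,t)\in\mathbb{R}^n\times(0,\infty)$ using the semigroup identity \eqref{L1.1} of Lemma \ref{lem3.1} together with the Beta-function integral $\int_0^t (t-\tau)^{\alpha-1}\tau^{\beta-1}\,d\tau=\frac{\Gamma(\alpha)\Gamma(\beta)}{\Gamma(\alpha+\beta)}t^{\alpha+\beta-1}$. This yields
\[
J_\alpha g(x,t)=\frac{A\,t^{\alpha+\beta-1}}{\Gamma(\alpha+\beta)}\Phi_1(x,t)=\frac{A\,\Gamma(\beta)}{\Gamma(\alpha+\beta)}\,t^{\alpha}\Phi_\beta(x,t),
\]
where in the last step I rewrite $\Phi_1$ in terms of $\Phi_\beta$ via the identity $\Phi_\beta(x,t)=\frac{t^{\beta-1}}{\Gamma(\beta)}\Phi_1(x,t)$ coming from the definition \eqref{I.9}.

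The inequality $g\geq(J_\alpha g)^\lambda$ then reduces to
\[
A\,\Phi_\beta(x,t)\;\geq\;\Bigl(\tfrac{A\,\Gamma(\beta)}{\Gamma(\alpha+\beta)}\Bigr)^{\lambda}t^{\alpha\lambda}\,\Phi_\beta(x,t)^{\lambda}.
\]
Dropping the common factor of $\Phi_\beta$ and using the elementary bound $e^{-|x|^2/(4t)}\leq 1$ to replace $\Phi_\beta(x,t)^{\lambda-1}$ by $\bigl(\frac{t^{\beta-1-n/2}}{\Gamma(\beta)(4\pi)^{n/2}}\bigr)^{\lambda-1}$, this is implied by
\[
A^{1-\lambda}\;\geq\;\Bigl(\tfrac{\Gamma(\beta)}{\Gamma(\alpha+\beta)}\Bigr)^{\lambda}\,\frac{t^{\alpha\lambda+(\beta-1-n/2)(\lambda-1)}}{\Gamma(\beta)^{\lambda-1}(4\pi)^{n(\lambda-1)/2}}.
\]
The key algebraic observation is that the exponent of $t$ on the right vanishes: using $\beta=\frac{n+2}{2}-\frac{\lambda\alpha}{\lambda-1}$ one computes $\alpha\lambda+(\beta-1-n/2)(\lambda-1)=\alpha\lambda-(\lambda-1)\cdot\frac{\lambda\alpha}{\lambda-1}=0$. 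This is precisely why the critical exponent relation \eqref{T4.2} is the right one, and it is the scaling-invariance step on which everything hinges; it should be viewed as the main (though still elementary) obstacle. With the $t$-dependence gone, the condition becomes a single numerical inequality in $A$, and one checks that equality holds exactly for the choice of $A$ in \eqref{T4.4}, since then
\[
A^{\lambda-1}=\frac{(4\pi)^{n(\lambda-1)/2}\,\Gamma(\alpha+\beta)^{\lambda}}{\Gamma(\beta)}.
\]

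Finally, Lemma \ref{lem3.2} (with the given $T>0$) applied to this $g$ produces the function $f$ in \eqref{T4.3} as a solution of the super $J_\alpha$ problem \eqref{Jsup1}, \eqref{Jsup2}; the smoothness $f\in C^\infty(\mathbb{R}^n\times[0,\infty))$ follows because $\Phi_\beta(\cdot,\cdot+T)$ is smooth on $\mathbb{R}^n\times[0,\infty)$, and $f\in X^p$ follows from the $L^p$ estimates of Remark \ref{rem3.1} combined with the gaussian decay in $x$.
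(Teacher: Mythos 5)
Your proposal is correct and follows essentially the same route as the paper's proof: reduce to Lemma \ref{lem3.2} with $g=A\Phi_\beta$, verify \eqref{L2.1} via Remark \ref{rem3.1}, compute $J_\alpha g$ from Lemma \ref{lem3.1} and the Beta integral, and check that the $t$-exponent in $(J_\alpha g)^\lambda/g$ vanishes by \eqref{T4.2} so that the choice of $A$ in \eqref{T4.4} closes the inequality. The only difference is cosmetic: the paper writes $J_\alpha g=A\Phi_{\alpha+\beta}$ while you write it as $\frac{A\Gamma(\beta)}{\Gamma(\alpha+\beta)}t^{\alpha}\Phi_\beta$, which is the same identity.
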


\begin{proof}
 Inequality \eqref{T4.2} follows from \eqref{T4.1} and $f$ given by \eqref{T4.3} is clearly in $C^\infty (\mathbb{R}^n \times[0,\infty))$.  Thus, to prove
 Theorem \ref{thm4.4}, it suffices by Lemma \ref{lem3.2} to show that the function $g:\mathbb{R}^n \times\mathbb{R}\to[0,\infty)$ defined by
 $$g(x,t)=A\Phi_\beta (x,t)$$
 satisfies \eqref{L2.1} and \eqref{L2.2}.
 
 By Remark \ref{rem3.1}, $g$ satisfies \eqref{L2.1}.  Hence it remains
 only to show $g$ satsfies \eqref{L2.2}.  The inequality \eqref{L2.2}
 holds in $\mathbb{R}^n \times(-\infty,0]$ because $g=0$ there.  On
 the other hand, for $(x,t)\in\mathbb{R}^n \times(0,\infty)$, it
 follows from \eqref{T4.2} and Lemma \ref{lem3.1} that
 \begin{align*}
  J_\alpha g(x,t)&=A\int^{t}_{0}\left(\int_{\mathbb{R}^n}\Phi_\alpha (x-\xi,t-\tau)\Phi_\beta (\xi,\tau)d\xi\right)d\tau\\
  &=A\Phi_1 (x,t)\int^{t}_{0}\frac{(t-\tau)^{\alpha-1}\tau^{\beta-1}}{\Gamma(\alpha)\Gamma(\beta)}d\tau\\
  &=A\Phi_1 (x,t)\frac{t^{\alpha+\beta-1}}{\Gamma(\alpha+\beta)}=A\Phi_{\alpha+\beta}(x,t)
 \end{align*}
 and thus
 \begin{align*}
  \frac{(J_\alpha g(x,t))^\lambda}{g(x,t)} & =A^{\lambda-1}\frac
{\Gamma(\beta)
t^{\lambda\alpha+(\lambda-1)(\beta-\frac{n+2}{2})}e^{(1-\lambda)|x|^2
                                             /(4t)}}
{(4\pi)^{(\lambda-1)n/2}\Gamma(\alpha+\beta)^\lambda}\\
  & \leq A^{\lambda-1}\frac{\Gamma(\beta)}{(4\pi)^{(\lambda-1)n/2}\Gamma(\alpha+\beta)^\lambda}=1
 \end{align*}
 by \eqref{T4.1}, \eqref{T4.2}, and \eqref{T4.4}.  The proof of
 Theorem \ref{thm4.4} is now complete.
\end{proof}

Since any solution of the approximate $J_\alpha$ problem is, after
scaling, also a solution of the super and sub $J_\alpha$ problems, it
follows from Theorems \ref{thm2.1} and \ref{thm2.3} that if
$\lambda$, $\alpha$, and $p$ satisfy the conditions in Theorem
\ref{thm2.2} and $(\lambda,\alpha)\in A\cup B\cup C\cup D$ then there
do not exist positive constants $C_1$ and $C_2$ such that the
approximate $J_\alpha$ problem has a nontrivial nonnegative solution
$f\in X^p$. Hence Theorem \ref{thm2.2} follows from the following
theorem.

\begin{thm}\label{thm4.5}
 Suppose the constants $\lambda$ and $\alpha$ satisfy
 \begin{equation}\label{T5.1}
  \lambda>1\quad\text{and}\quad 0<\alpha<\frac{n+2}{2}\left(1-\frac{1}{\lambda}\right).
 \end{equation}
 Define $f:\mathbb{R}^n \times\mathbb{R}\to[0,\infty)$ by
 \begin{equation}\label{T5.2}
  f(x,t)=
  \begin{cases}
   t^{-\frac{\alpha\lambda}{\lambda-1}}w(x/\sqrt{t}) & \text{for }(x,t)\in\mathbb{R}^n \times(0,\infty)\\
   0 & \text{for }(x,t)\in\mathbb{R}^n \times(-\infty,0]
  \end{cases}
 \end{equation}
 where
 \begin{equation}\label{T5.3}
  w(z)=e^{-\frac{\lambda|z|^2}{4}}
(|z|^2 +1)^{-\lambda\left(\frac{n+2}{2}-\frac{\alpha\lambda}{\lambda-1}\right)}\quad\text{for }z\in\mathbb{R}^n .
 \end{equation}
 Then
 \begin{equation}\label{T5.4}
  f\in C^\infty (\mathbb{R}^n \times\mathbb{R}\backslash \{(0,0)\})\cap X^1
 \end{equation}
 and $f$ is a solution of the approximate $J_\alpha$ problem
 \eqref{Jap1}, \eqref{Jap2} for some positive constants $C_1$ and $C_2$ depending only on $n,\lambda$, and $\alpha$.
 
 Moreover if $p\in[1,\infty)$ then $f\in X^p$ if and only if
 \begin{equation}\label{T5.5}
  \alpha<\frac{n+2}{2p}\left(1-\frac{1}{\lambda}\right).
 \end{equation}
\end{thm}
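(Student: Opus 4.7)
The plan is to verify the three assertions of Theorem~\ref{thm4.5} in order: the regularity \eqref{T5.4}, the approximate inequality, and the $X^p$ characterization.

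For the smoothness claim, $f$ is manifestly $C^\infty$ on $\mathbb{R}^n\times(0,\infty)$ and vanishes identically on $\mathbb{R}^n\times(-\infty,0)$. At a boundary point $(x_0,0)$ with $x_0\neq0$, the Gaussian factor $e^{-\lambda|x|^2/(4t)}$ inside $w(x/\sqrt{t})$ kills every polynomial in $1/t$, so $f$ and all its partial derivatives extend continuously by $0$ there. For the approximate inequality, I exploit self-similarity: substituting $\tau=ts$, $\xi=\sqrt{t}\,\eta$ in the integral defining $J_\alpha f$ and using $\Phi_\alpha(\sqrt{t}\,y,\,tr)=t^{\alpha-1-n/2}\Phi_\alpha(y,r)$ yields
\[
J_\alpha f(x,t)=t^{-\alpha/(\lambda-1)}W(x/\sqrt{t}),\quad
W(z):=\int_0^1\!\!\int_{\mathbb{R}^n}\Phi_\alpha(z-\eta,1-s)\,s^{-\sigma}w(\eta/\sqrt{s})\,d\eta\,ds,
\]
with $\sigma=\alpha\lambda/(\lambda-1)$. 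Writing $f=u^\lambda$ for $u$ defined by \eqref{ss3}--\eqref{ss4}, which shares the same time scaling $t^{-\alpha/(\lambda-1)}$, the inequality $C_1 f\le(J_\alpha f)^\lambda\le C_2 f$ reduces to a $t$-independent statement
\[
c_1\,w_\alpha(z)\le W(z)\le c_2\,w_\alpha(z)\quad\text{for all } z\in\mathbb{R}^n,
\]
where $w_\alpha(z)=e^{-|z|^2/4}(|z|^2+1)^{-\beta}$ and $\beta=(n+2)/2-\sigma>0$ by \eqref{T4.2}.

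The decisive step is this two-sided bound $W\asymp w_\alpha$. On compact sets $W$ is continuous and strictly positive by dominated convergence and positivity of the integrand, so the bound is automatic there. For large $|z|$ the key algebraic identity is obtained by completing the square in the combined Gaussian exponents,
\[
-\frac{|z-\eta|^2}{4(1-s)}-\frac{\lambda|\eta|^2}{4s}
=-\frac{A(s)}{4s(1-s)}\Bigl|\eta-\tfrac{s}{A(s)}z\Bigr|^2-\frac{\lambda|z|^2}{4A(s)},
\]
with $A(s):=\lambda-s(\lambda-1)$ decreasing from $\lambda$ (at $s=0$) to $1$ (at $s=1$). The separated prefactor $e^{-\lambda|z|^2/(4A(s))}$ is maximized as $s\to 0^+$ where it equals $e^{-|z|^2/4}$, matching the Gaussian in $w_\alpha$; hence the dominant contribution for large $|z|$ comes from a neighborhood of $s=0$. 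To also extract the polynomial factor $(|z|^2+1)^{-\beta}$, one must retain the factor $(|\eta|^2+s)^{-\lambda\beta}$ from $w(\eta/\sqrt{s})$ throughout: splitting the $(s,\eta)$-integration according to the size of $|\eta|$ relative to $\sqrt{s}$ and $|z|$, the Gaussian concentration near $\eta=\tfrac{s}{A(s)}z$ combined with the polynomial decay in $w$ produces the required $(|z|^2+1)^{-\beta}$ behavior. The matching lower bound is obtained by restricting the integral to a slab in $s$ near $0$ and a convenient neighborhood of $\eta$ on which both factors are bounded from below explicitly.

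For the $X^p$ characterization, the substitution $x=\sqrt{t}\,z$ gives, for $T>0$,
\[
\int_0^T\!\!\int_{\mathbb{R}^n} f(x,t)^p\,dx\,dt=\Bigl(\int_{\mathbb{R}^n} w(z)^p\,dz\Bigr)\int_0^T t^{-p\sigma+n/2}\,dt.
\]
The spatial integral is finite for every $p\ge 1$ by the Gaussian decay of $w$, while the temporal integral converges iff $-p\sigma+n/2>-1$, i.e., $p\sigma<(n+2)/2$, which rearranges to \eqref{T5.5}. In particular the $f\in X^1$ assertion of \eqref{T5.4} follows from \eqref{T5.1}. The principal obstacle is extracting the polynomial decay $(|z|^2+1)^{-\beta}$ in the asymptotic analysis of $W(z)$: the Gaussian profile drops out cleanly from the saddle-point identity above, but a crude estimate like $(|\eta|^2+s)^{-\lambda\beta}\le s^{-\lambda\beta}$ would only give $W(z)\lesssim e^{-|z|^2/4}$, losing the essential polynomial factor.
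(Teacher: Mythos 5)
Your overall architecture matches the paper's: the self-similar substitution reducing \eqref{Jap1} to a $t$-independent two-sided bound on a profile function, and the $X^p$ computation via $x=\sqrt{t}\,z$, are exactly what the paper does. But the entire analytic content of the theorem is the two-sided bound $W\asymp w_\alpha$, and you do not prove it --- you assert it and outline a plan, and that plan rests on a misdiagnosis. You claim that the crude bound $w(z)\le e^{-\lambda|z|^2/4}$ (equivalently $(|\eta|^2+s)^{-\lambda\beta}\le s^{-\lambda\beta}$) ``would only give $W(z)\lesssim e^{-|z|^2/4}$, losing the essential polynomial factor,'' and that one must therefore carry the polynomial factor of $w$ through a case analysis in $(s,\eta)$. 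This is false, and the paper's proof is built on exactly the estimate you dismiss. After dropping the polynomial factor, the $\eta$-integral is an exact Gaussian convolution (the paper's \eqref{T5.10}, your completed square), producing a factor $(s(1-s))^{n/2}e^{-\lambda|z|^2/(4A(s))}$; the point you miss is that $e^{-\lambda|z|^2/(4A(s))}=e^{-|z|^2/(4(1-\sigma s))}$ is \emph{strictly smaller} than $e^{-|z|^2/4}$ for $s>0$, with $e^{-(\frac{1}{1-\sigma s}-1)\frac{|z|^2}{4}}\le e^{-\sigma s|z|^2/4}$. The surviving one-dimensional integral $\int_0^1(1-s)^{\alpha-1}s^{\delta-1}e^{-\sigma s|z|^2/4}\,ds$, with $\delta=\frac{n+2}{2}-\frac{\alpha\lambda}{\lambda-1}$ coming from $s^{-\alpha\lambda/(\lambda-1)}\cdot s^{n/2}$, is $O\bigl((|z|^2)^{-\delta}\bigr)$ (the paper splits it at $s=b/2$, $b=\sigma|z|^2/4$, to handle $\alpha<1$). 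So the polynomial factor $(|z|^2+1)^{-\delta}$ comes from the \emph{time} integral, not from the polynomial factor of $w$, which is irrelevant to the upper bound.

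Your lower-bound sketch is closer in spirit to the paper (restrict to $s$ where the Gaussian prefactor is comparable to $e^{-|z|^2/4}$ and to a set of $\eta$ where $w(\eta/\sqrt{s})$ is bounded below), but again no estimate is carried out; the paper restricts to $|\eta|<\sqrt{s}$, uses a geometric overlap bound $|B_{|z|}(z)\cap B_{\sqrt{s}}(0)|\ge c\,s^{n/2}$, and evaluates the resulting integral by the substitution $r+a=a/(1-s)$, $a=|z|^2/4$, to get $c\,e^{-|z|^2/4}(|z|^2)^{-\delta}$. Until the two-sided bound is actually established --- and your proposed route for the upper bound would at best be an unnecessary detour --- the proof is incomplete. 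The regularity and $X^p$ parts of your argument are fine and agree with the paper.
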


\begin{proof}
 We first prove the last sentence of the theorem.  Let $p\in[1,\infty)$.  then for all $t>0$ we find under the change of variables $x=\sqrt{\tau}z$ that
 \begin{align*}
 \iint_{\mathbb{R}^n \times(-\infty,t)}f(x,\tau)^p dxd\tau&=\int^{t}_{0}\tau^{-\frac{\alpha\lambda p}{\lambda-1}}\left(\int_{\mathbb{R}^n}w\left(\frac{x}{\sqrt{\tau}}\right)^p dx\right)d\tau\\
 &=\int^{t}_{0}\tau^{\frac{n}{2}-\frac{\alpha\lambda p}{\lambda-1}}\left(\int_{\mathbb{R}^n}w(z)^p dz\right)d\tau.
 \end{align*}
 Hence, since $\int_{\mathbb{R}^n}w(z)^p dz<\infty$, we see that $f\in X^p$ if and only if
 $$\frac{n}{2}-\frac{\alpha\lambda p}{\lambda-1}>-1$$
 which is equivalent to \eqref{T5.5}.
 
 It follows from \eqref{T5.1} and the last sentence of the theorem
 that $f\in X^1$.  One easily checks that
 $f\in C^\infty (\mathbb{R}^n \times\mathbb{R}\backslash \{(0,0)\})$.
 Hence $f$ satisfies \eqref{T5.4}.
 
 We now complete the proof of the theorem by proving that $f$ satisfies the inequalities \eqref{Jap1}.  Let $(x,t)\in\mathbb{R}^n \times(0,\infty)$ and let
 $\bar{x}=x/ \sqrt{t}$.  Then
 \begin{equation}\label{T5.6}
  f(x,t)=t^{-\frac{\alpha\lambda}{\lambda-1}}w(\bar{x})
 \end{equation}
 and under the variables $\tau=t\bar{\tau}$ and $\xi=\sqrt{t}\bar{\xi}$ we get
 $$\Phi_\alpha (x-\xi,t-\tau)=t^{\alpha-1-n/2}\Phi_\alpha (\bar{x}-\bar{\xi},1-\bar{\tau}).$$
 Thus
 \begin{align*}
  J_\alpha f(x,t) & =\int^{t}_{0}\int_{\mathbb{R}^n}\Phi_\alpha (x-\xi,t-\tau)\tau^{-\frac{\alpha\lambda}{\lambda-1}}w(\xi/ \sqrt{\tau})\,d\xi\,d\tau\\
  & =t^{\alpha-\frac{\alpha\lambda}{\lambda-1}}\int^{1}_{0}\int_{\mathbb{R}^n}\Phi_\alpha (\bar{x}-\bar{\xi},1-\bar{\tau})\bar{\tau}^{-\frac{\alpha\lambda}{\lambda-1}}w(\bar{\xi}/ \sqrt{\bar{\tau}})\,d\bar{\xi}\,d\bar{\tau}\\
  & =t^{-\frac{\alpha}{\lambda-1}}J_\alpha f(\bar{x},1).
 \end{align*}
 Hence letting
 \begin{equation}\label{T5.7}
  I(x)=\Gamma(\alpha)(4\pi)^{n/2}J_\alpha f(x,1)
 \end{equation}
 we obtain from \eqref{T5.6} that
 $$\frac{J_\alpha f(x,t)}{f(x,t)^{1/ \lambda}}=\frac{J_\alpha f(\bar{x},1)}{w(\bar{x})^{1/ \lambda}}=\frac{I(\bar{x})}{\Gamma(\alpha)(4\pi)^{n/2}w(\bar{x})^{1/ \lambda}}$$
 for $(x,t)\in\mathbb{R}^n \times(0,\infty)$.  Thus, since \eqref{Jap1} clearly holds in $\mathbb{R}^n \times(-\infty,0]$, in order to prove \eqref{Jap1} it  
 suffices to prove
 \begin{equation}\label{T5.8}
  0<C_1 \leq\frac{I(x)}{w(x)^{1/ \lambda}}\leq C_2 \quad\text{for }x\in\mathbb{R}^n
 \end{equation}
 where $C_1$ and $C_2$ depend only on $n,\lambda$, and $\alpha$ and from \eqref{T5.7}
 \begin{equation}\label{T5.9}
  I(x)=\int^{1}_{0}(1-\tau)^{\alpha-1-n/2}\tau^{-\frac{\alpha\lambda}{\lambda-1}}\int_{\mathbb{R}^n}e^{-\frac{|x-\xi|^2}{4(1-\tau)}}w(\xi/ \sqrt{\tau})\,d\xi\,d\tau.
 \end{equation} 

 To do this we will need the identity 
 \begin{equation}\label{T5.10}
  (g_a *g_b )(x)=\left(\frac{\pi ab}{a+b}\right)^{n/2}g_{a+b}(x)\quad\text{for }x\in\mathbb{R}^n \text{ and }a,b>0,
 \end{equation}
 where $g_a :\mathbb{R}^n \to(0,\infty)$ is defined by
 $$g_a (x)=e^{-|x|^2 /a}.$$
 This identity can be proved in a straightforward way using the convolution theorem for the Fourier transform and the well-known transform
 $$\hat{g}_a (y)=(\pi a)^{n/2}e^{-\frac{a|y|^2}{4}}$$
 to show that the left and right sides of \eqref{T5.10} have the same Fourier transform.
 
 We first prove the upper bound in \eqref{T5.8}.  Since \eqref{T5.1} implies
 \begin{equation}\label{T5.11}
  \delta:=\frac{n+2}{2}-\frac{\alpha\lambda}{\lambda-1}>0
 \end{equation}
 it follows from \eqref{T5.3} that
 $$w(z)\leq e^{-\frac{\lambda|z|^2}{4}}\quad\text{for }z\in\mathbb{R}^n .$$
 Hence for $x\in\mathbb{R}^n$ we obtain from \eqref{T5.9} that
 $$I(x)\leq\int^{1}_{0}(1-\tau)^{\alpha-1-n/2}\tau^{-\frac{\alpha\lambda}{\lambda-1}}h_\tau (x)d\tau$$
 where
 $$h_\tau (x)=\int_{\mathbb{R}^n}e^{-\frac{|x-\xi|^2}{4(1-\tau)}}e^{-\frac{\lambda|\xi|^2}{4\tau}}d\xi.$$
 Defining
 \begin{equation}\label{T5.12}
  \sigma:=1-1/\lambda\in(0,1)
 \end{equation}
 by \eqref{T5.1} and using \eqref{T5.10} we find that
 \begin{align*}
  h_\tau (x) & =\left(\frac{4\pi(1-\tau)\tau}{\lambda(1-\sigma\tau)}\right)^{n/2}e^{-\frac{|x|^2}{4(1-\sigma\tau)}}\\
  & \leq(4\pi(1-\tau)\tau)^{n/2}e^{-\frac{|x|^2}{4(1-\sigma\tau)}}\quad\text{for }0<\tau<1\text{ and }x\in\mathbb{R}^n .
 \end{align*}
 Thus
 \begin{equation}\label{T5.13}
  I(x)\leq C\int^{1}_{0}
(1-\tau)^{\alpha-1}\tau^{\delta-1}e^{-\frac{|x|^2}{4(1-\sigma\tau)}}d\tau,
 \end{equation}
 where $\delta$ is defined in \eqref{T5.11}.\\
 \smallskip
  
 \noindent \underline{Case I.} Suppose $|x|\leq1$.  Then by \eqref{T5.13}, \eqref{T5.12}, \eqref{T5.11}, and \eqref{T5.3},
 $$I(x)\leq C\int^{1}_{0}(1-\tau)^{\alpha-1}\tau^{\delta-1}d\tau = C\leq Cw(x)^{1/ \lambda}.$$
 That is the upper bound in \eqref{T5.8} holds.\\
 \smallskip

 \noindent \underline{Case II.} Suppose $|x|>1$.  Then by \eqref{T5.13}, \eqref{T5.12}, and \eqref{T5.11} we have
 \begin{align*}
  e^{|x|^2 /4}I(x) & \leq C\int^{1}_{0}(1-\tau)^{\alpha-1}\tau^{\delta-1}e^{-\left(\frac{1}{1-\sigma\tau}-1\right)\frac{|x|^2}{4}}d\tau\\
  & \leq C\int^{1}_{0}(1-\tau)^{\alpha-1}\tau^{\delta-1}e^{-b\tau}d\tau\quad\text{where }b:=\frac{\sigma|x|^2}{4}>\frac{\sigma}{4}>0\\
&\le C\int_0^b\left(1-\frac{s}{b}\right)^{\alpha-1}\left(\frac{s}{b}\right)^{\delta-1}
e^{-s}\frac{1}{b}ds\quad\text{where }s=b\tau\\
& =Cb^{-(\alpha+\delta-1)}(I_1 (b)+I_2 (b))
 \end{align*}
 where
 \begin{align*}
  I_1 (b): & =\int^{b/2}_{0}(b-s)^{\alpha-1}s^{\delta-1}e^{-s}ds\\
  & \leq Cb^{\alpha-1}\int^{b/2}_{0}s^{\delta-1}e^{-s}ds\leq Cb^{\alpha-1}
  \end{align*}
  and
  \begin{align*}
  I_2 (b): & =\int^{b}_{b/2}(b-s)^{\alpha-1}s^{\delta-1}e^{-s}ds\\
  & \leq e^{-b/2}\int^{b}_{0}(b-s)^{\alpha-1}s^{\delta-1}ds\\
  & =e^{-b/2}b^{\alpha+\delta-1}\int^{1}_{0}
(1-\tau)^{\alpha-1}\tau^{\delta-1}d\tau\quad\text{where }s=b\tau\\
  & =C(e^{-b/2}b^\delta )b^{\alpha-1}\leq Cb^{\alpha-1}.
 \end{align*}
 Hence from \eqref{T5.11} we obtain
 $$e^{|x|^2 /4}I(x)\leq Cb^{-\delta}=C(|x|^2 )^{-\left(\frac{n+2}{2}-\frac{\alpha\lambda}{\lambda-1}\right)}.$$
 Thus the upper bound in \eqref{T5.8} holds when $|x|>1$.
 
 To complete the proof of the theorem we now prove the lower bound in
 \eqref{T5.8}. Since $w$ is a positive continuous function on
 $\mathbb{R}^n$ we find from \eqref{T5.9} and for $x\in\mathbb{R}^n$
 that
 \begin{equation}\label{T5.14}
  I(x)\geq C\int^{1}_{0}(1-\tau)^{\alpha-1-n/2}\tau^{-\frac{\alpha\lambda}{\lambda-1}}\int_{|\xi|<\sqrt{\tau}}e^{-\frac{|x-\xi|^2}{4(1-\tau)}}\,d\xi\,d\tau
 \end{equation}
 and thus from \eqref{T5.11} and for $|x|\le 2$ we have
 \begin{align*}
  I(x) & \geq C\int^{1}_{0}(1-\tau)^{\alpha-1-n/2}\tau^{\delta-1}e^{-\frac{9}{4(1-\tau)}}d\tau\\
  & =C\geq Cw(x)^{1/ \lambda}.
 \end{align*}
 Hence it remains only to prove the lower bound in \eqref{T5.8} when
 \begin{equation}\label{T5.15}
  |x|>2.
 \end{equation}
 
 Since for $x\in\mathbb{R}^n$ and $\tau>0$, the expression
 $$\frac{|B_{|x|}(x)\cap B_{\sqrt{\tau}}(0)|}{|B_{\sqrt{\tau}}(0)|}=:V\left(\frac{|x|}{\sqrt{\tau}}\right)$$
 is an increasing function of $\frac{|x|}{\sqrt{\tau}}$ we have
 $$V\left(\frac{|x|}{\sqrt{\tau}}\right)\geq V(1)=\frac{|B_1 (e)\cap B_1 (0)|}{|B_1 (0)|}>0\quad\text{for }0<\sqrt{\tau}<|x|$$
 where $e:=(1,0,...0)\in\mathbb{R}^n$.  It follows therefore from \eqref{T5.14} and \eqref{T5.15} that
 \begin{align}\label{T5.16}
  \notag I(x)&\geq C\int^{1}_{0}(1-\tau)^{\alpha-1-n/2}\tau^{-\frac{\alpha\lambda}{\lambda-1}}\int_{\xi\in B_{|x|}(x)\cap B_{\sqrt{\tau}}(0)}e^{-\frac{|x|^2}{4(1-\tau)}}\,d\xi\,d\tau\\
  &\geq C\int^{1}_{0}(1-\tau)^{-\mu}\tau^{\delta-1}e^{-\frac{a}{1-\tau}}d\tau
 \end{align}
 where $\delta$ is defined in \eqref{T5.11},
 \begin{equation}\label{T5.17}
  \mu:=\frac{n+2}{2}-\alpha,\quad\text{and}\quad a:=\frac{|x|^2}{4}>1
 \end{equation}
 by \eqref{T5.15}
 
 Next making the change of variables
 $s+a=a/(1-\tau)$
 in \eqref{T5.16} and using \eqref{T5.17} we obtain
 \begin{align*}
  I(x) & \geq C\int^{\infty}_{0}\left(\frac{a}{s+a}\right)^{-\mu}\left(\frac{s}{s+a}\right)^{\delta-1}e^{-(s+a)}\frac{a}{(s+a)^2}ds\\
  & =Ca^{1-\mu}e^{-a}\int^{\infty}_{0}(s+a)^{\mu-\delta-1}s^{\delta-1}e^{-s}ds\\
  & =Ca^{-\delta}e^{-a}\int^{\infty}_{0}\left(1+\frac{s}{a}\right)^{\mu-\delta-1}s^{\delta-1}e^{-s}ds\\
  & \geq Ce^{-\frac{|x|^2}{4}}(|x|^2 )^{-\left(\frac{n+2}{2}-\frac{\alpha\lambda}{\lambda-1}\right)}\\
  & \geq Cw(x)^{1/ \lambda}
 \end{align*}
 because
 $$1<1+\frac{s}{a}<1+s\quad\text{for }s>0$$
 by \eqref{T5.17}.  Thus the lower bound in \eqref{T5.8} holds when $|x|>2$.
\end{proof}

\end{document}